\newtheorem{cor}[subsubsection]{Corollary}
\newtheorem{lem}[subsubsection]{Lemma}
\newtheorem{prop}[subsubsection]{Proposition}
\newtheorem{conj}[subsubsection]{Conjecture}
\newtheorem{thm}[subsubsection]{Theorem}
\theoremstyle{remark}
\newtheorem{rem}[subsubsection]{Remark}
\theoremstyle{definition}
\theoremstyle{remark}
\newcommand{\thmref}[1]{Theorem~\ref{#1}}
\newcommand{\secref}[1]{Sect.~\ref{#1}}
\newcommand{\lemref}[1]{Lemma~\ref{#1}}
\newcommand{\propref}[1]{Proposition~\ref{#1}}
\newcommand{\corref}[1]{Corollary~\ref{#1}}
\newcommand{\conjref}[1]{Conjecture~\ref{#1}}
\numberwithin{equation}{section}
\newcommand{\nc}{\newcommand}
\nc{\renc}{\renewcommand}
\nc{\ssec}{\subsection}
\nc{\sssec}{\subsubsection}
\nc{\on}{\operatorname}
\nc{\ips}{{\iota_P^{(S)}}}
\nc{\ipms}{{\iota_{P^-}^{(S)}}}
\nc{\sfpps}{{\sfp_P^{(S)}}}
\nc{\sfppms}{{\sfp_{P^-}^{(S)}}}
\nc\ol{\overline}
\nc\wt{\widetilde}
\nc\tboxtimes{\wt{\boxtimes}}
\nc\tstar{\wt{\star}}
\nc{\alp}{\alpha}
\nc{\ZZ}{{\mathbb Z}}
\nc{\NN}{{\mathbb N}}
\nc{\BF}{{\mathbb F}}
\nc{\OO}{{\mathbb O}}
\renc{\SS}{{\mathbb S}}
\nc{\DD}{{\mathbb D}}
\nc{\GG}{{\mathbb G}}
\nc{\Fq}{{\mathbb F}_q}
\nc{\Fqb}{\ol{\mathbb F}_q}
\nc{\Ql}{{\mathbb Q}_\ell}
\nc{\Qlb}{{\ol{\mathbb Q}_\ell}}
\nc{\id}{\text{id}}
\nc\X{\mathcal X}
\nc{\red}{\on{red}}
\nc{\Ho}{\on{Ho}}
\nc{\Hom}{\on{Hom}}
\nc{\coef}{\on{coef}}
\nc{\Lie}{\on{Lie}}
\nc{\Diff}{\on{Diff}}
\nc{\Loc}{\on{Loc}}
\nc{\coLoc}{\on{coLoc}}
\nc{\Pic}{\on{Pic}}
\nc{\Bun}{\on{Bun}}
\nc{\IC}{\on{IC}}
\nc{\Aut}{\on{Aut}}
\nc{\rk}{\on{rk}}
\nc{\Sh}{\on{Sh}}
\nc{\Perv}{\on{Perv}}
\nc{\pos}{{\on{pos}}}
\nc{\Conv}{\on{Conv}}
\nc{\Sph}{\on{Sph}}
\nc{\Sym}{\on{Sym}}
\nc{\BunBb}{\overline{\Bun}_B}
\nc{\BunNb}{\overline{\Bun}_N}
\nc{\BunTb}{\overline{\Bun}_T}
\nc{\BunBbm}{\overline{\Bun}_{B^-}}
\nc{\BunBbel}{\overline{\Bun}_{B,el}}
\nc{\BunBbmel}{\overline{\Bun}_{B^-,el}}
\nc{\Buno}{\overset{o}{\Bun}}
\nc{\BunPb}{{\overline{\Bun}_P}}
\nc{\BunBM}{\Bun_{B(M)}}
\nc{\BunBMb}{\overline{\Bun}_{B(M)}}
\nc{\BunPbw}{{\widetilde{\Bun}_P}}
\nc{\BunBP}{\widetilde{\Bun}_{B,P}}
\nc{\GUb}{\overline{G/U}}
\nc{\GUPb}{\overline{G/U(P)}}
\nc{\Hhom}{\underline{\on{Hom}}}
\nc\syminfty{\on{Sym}^{\infty}}
\nc\lal{\ol{\lambda}}
\nc\xl{\ol{x}}
\nc\thl{\ol{\theta}}
\nc\nul{\ol{\nu}}
\nc\mul{\ol{\mu}}
\nc{\oX}{\overset{\circ}{X}{}}
\nc{\wtoX}{\wt{\overset{\circ}{X}}{}}
\nc{\hl}{\overset{\leftarrow}h{}}
\nc{\hr}{\overset{\rightarrow}h{}}
\nc{\M}{{\mathcal M}}
\nc{\N}{{\mathcal N}}
\nc{\F}{{\mathcal F}}
\nc{\D}{{\mathcal D}}
\nc{\Q}{{\mathcal Q}}
\nc{\Y}{{\mathcal Y}}
\nc{\G}{{\mathcal G}}
\nc{\E}{{\mathcal E}}
\nc{\CalC}{{\mathcal C}}
\nc\Dh{\widehat{\D}}
\nc{\C}{{\mathcal C}}
\nc{\K}{{\mathcal K}}
\renewcommand{\H}{{\mathcal H}}
\nc{\T}{{\mathcal T}}
\nc{\V}{{\mathcal V}}
\renc{\P}{{\mathcal P}}
\nc{\A}{{\mathcal A}}
\nc{\B}{{\mathcal B}}
\nc{\U}{{\mathcal U}}
\nc{\Gr}{{\on{Gr}}}
\nc{\frn}{{\check{\mathfrak u}(P)}}
\nc{\fC}{\mathfrak C}
\nc{\p}{\mathfrak p}
\nc{\q}{\mathfrak q}
\nc\f{{\mathfrak f}}
\nc{\qo}{{\mathfrak q}}
\nc{\po}{{\mathfrak p}}
\nc{\s}{{\mathfrak s}}
\nc\w{\text{w}}
\renewcommand{\mod}{{\on{-mod}}}
\nc\Spec{\on{Spec}}
\nc\Proj{\on{Proj}}
\nc\Mod{\on{Mod}}
\nc{\tw}{\widetilde{\mathfrak t}}
\nc{\pw}{\widetilde{\mathfrak p}}
\nc{\qw}{\widetilde{\mathfrak q}}
\nc{\jw}{\widetilde j}
\nc{\grb}{\overline{\Gr}}
\nc{\I}{\mathcal I}
\nc{\lambdach}{{\check\lambda}}
\nc{\Lambdach}{{\check\Lambda}{}}
\nc{\much}{{\check\mu}}
\nc{\omegach}{{\check\omega}}
\nc{\nuch}{{\check\nu}}
\nc{\etach}{{\check\eta}}
\nc{\alphach}{{\check\alpha}}
\nc{\oblvtach}{{\check\oblvta}}
\nc{\rhoch}{{\check\rho}}
\nc{\ch}{{\check h}}
\nc{\Hb}{\overline{\H}}
\nc{\BA}{{\mathbb{A}}}
\nc{\BC}{{\mathbb{C}}}
\nc{\BG}{{\mathbb{G}}}
\nc{\BM}{{\mathbb{M}}}
\nc{\BO}{{\mathbb{O}}}
\nc{\BD}{{\mathbb{D}}}
\nc{\BN}{{\mathbb{N}}}
\nc{\BP}{{\mathbb{P}}}
\nc{\BQ}{{\mathbb{Q}}}
\nc{\BR}{{\mathbb{R}}}
\nc{\BZ}{{\mathbb{Z}}}
\nc{\BS}{{\mathbb{S}}}
\nc{\Deep}{{\bf{deep}}}
\nc{\deep}{deep}
\nc{\CA}{{\mathcal{A}}}
\nc{\CB}{{\mathcal{B}}}
\nc{\CE}{{\mathcal{E}}}
\nc{\CF}{{\mathcal{F}}}
\nc{\CH}{{\mathcal{H}}}
\nc{\CL}{{\mathcal{L}}}
\nc{\CC}{{\mathcal{C}}}
\nc{\CG}{{\mathcal{G}}}
\nc{\CalD}{{\mathcal{D}}}
\nc{\CM}{{\mathcal{M}}}
\nc{\CN}{{\mathcal{N}}}
\nc{\CK}{{\mathcal{K}}}
\nc{\CO}{{\mathcal{O}}}
\nc{\CP}{{\mathcal{P}}}
\nc{\CQ}{{\mathcal{Q}}}
\nc{\CR}{{\mathcal{R}}}
\nc{\CS}{{\mathcal{S}}}
\nc{\CT}{{\mathcal{T}}}
\nc{\CU}{{\mathcal{U}}}
\nc{\CV}{{\mathcal{V}}}
\nc{\CW}{{\mathcal{W}}}
\nc{\CX}{{\mathcal{X}}}
\nc{\CY}{{\mathcal{Y}}}
\nc{\CZ}{{\mathcal{Z}}}
\nc{\CI}{{\mathcal{I}}}
\nc{\csM}{{\check{\mathcal A}}{}}
\nc{\oM}{{\overset{\circ}{\mathcal M}}{}}
\nc{\obM}{{\overset{\circ}{\mathbf M}}{}}
\nc{\oCA}{{\overset{\circ}{\mathcal A}}{}}
\nc{\obA}{{\overset{\circ}{\mathbf A}}{}}
\nc{\ooM}{{\overset{\circ}{M}}{}}
\nc{\osM}{{\overset{\circ}{\mathsf M}}{}}
\nc{\vM}{{\overset{\bullet}{\mathcal M}}{}}
\nc{\nM}{{\underset{\bullet}{\mathcal M}}{}}
\nc{\oD}{{\overset{\circ}{\mathcal D}}{}}
\nc{\obD}{{\overset{\circ}{\mathbf D}}{}}
\nc{\oA}{{\overset{\circ}{\mathbb A}}{}}
\nc{\op}{{\overset{\bullet}{\mathbf p}}{}}
\nc{\cp}{{\overset{\circ}{\mathbf p}}{}}
\nc{\oU}{{\overset{\bullet}{\mathcal U}}{}}
\nc{\oZ}{{\overset{\circ}{\mathcal Z}}{}}
\nc{\ofZ}{{\overset{\circ}{\mathfrak Z}}{}}
\nc{\oF}{{\overset{\circ}{\fF}}}
\nc{\fa}{{\mathfrak{a}}}
\nc{\fb}{{\mathfrak{b}}}
\nc{\fd}{{\mathfrak{d}}}
\nc{\ff}{{\mathfrak{f}}}
\nc{\fg}{{\mathfrak{g}}}
\nc{\fgl}{{\mathfrak{gl}}}
\nc{\fh}{{\mathfrak{h}}}
\nc{\fj}{{\mathfrak{j}}}
\nc{\fk}{{\mathfrak{k}}}
\nc{\fl}{{\mathfrak{l}}}
\nc{\fm}{{\mathfrak{m}}}
\nc{\fn}{{\mathfrak{n}}}
\nc{\fu}{{\mathfrak{u}}}
\nc{\fp}{{\mathfrak{p}}}
\nc{\fr}{{\mathfrak{r}}}
\nc{\fs}{{\mathfrak{s}}}
\nc{\ft}{{\mathfrak{t}}}
\nc{\fv}{{\mathfrak{v}}}
\nc{\fz}{{\mathfrak{z}}}
\nc{\fsl}{{\mathfrak{sl}}}
\nc{\hsl}{{\widehat{\mathfrak{sl}}}}
\nc{\hgl}{{\widehat{\mathfrak{gl}}}}
\nc{\hg}{{\widehat{\mathfrak{g}}}}
\nc{\chg}{{\widehat{\mathfrak{g}}}{}^\vee}
\nc{\hn}{{\widehat{\mathfrak{n}}}}
\nc{\chn}{{\widehat{\mathfrak{n}}}{}^\vee}
\nc{\fA}{{\mathfrak{A}}}
\nc{\fB}{{\mathfrak{B}}}
\nc{\fD}{{\mathfrak{D}}}
\nc{\fE}{{\mathfrak{E}}}
\nc{\fF}{{\mathfrak{F}}}
\nc{\fG}{{\mathfrak{G}}}
\nc{\fK}{{\mathfrak{K}}}
\nc{\fL}{{\mathfrak{L}}}
\nc{\fM}{{\mathfrak{M}}}
\nc{\fN}{{\mathfrak{N}}}
\nc{\fP}{{\mathfrak{P}}}
\nc{\fU}{{\mathfrak{U}}}
\nc{\fV}{{\mathfrak{V}}}
\nc{\fZ}{{\mathfrak{Z}}}
\nc{\bb}{{\mathbf{b}}}
\nc{\bc}{{\mathbf{c}}}
\nc{\bd}{{\mathbf{d}}}
\nc{\bbf}{{\mathbf{f}}}
\nc{\be}{{\mathbf{e}}}
\nc{\bi}{{\mathbf{i}}}
\nc{\bj}{{\mathbf{j}}}
\nc{\bn}{{\mathbf{n}}}
\nc{\bo}{{\mathbf{o}}}
\nc{\bp}{{\mathbf{p}}}
\nc{\bq}{{\mathbf{q}}}
\nc{\bu}{{\mathbf{u}}}
\nc{\bv}{{\mathbf{v}}}
\nc{\bx}{{\mathbf{x}}}
\nc{\bs}{{\mathbf{s}}}
\nc{\by}{{\mathbf{y}}}
\nc{\bw}{{\mathbf{w}}}
\nc{\bA}{{\mathbf{A}}}
\nc{\bK}{{\mathbf{K}}}
\nc{\bB}{{\mathbf{B}}}
\nc{\bC}{{\mathbf{C}}}
\nc{\bG}{{\mathbf{G}}}
\nc{\bD}{{\mathbf{D}}}
\nc{\bE}{{\mathbf{E}}}
\nc{\bH}{{\mathbf{H}}}
\nc{\bM}{{\mathbf{M}}}
\nc{\bN}{{\mathbf{N}}}
\nc{\bO}{{\mathbf{O}}}
\nc{\bP}{{\mathbf{P}}}
\nc{\bV}{{\mathbf{V}}}
\nc{\bW}{{\mathbf{W}}}
\nc{\bX}{{\mathbf{X}}}
\nc{\bZ}{{\mathbf{Z}}}
\nc{\bS}{{\mathbf{S}}}
\nc{\sA}{{\mathsf{A}}}
\nc{\sB}{{\mathsf{B}}}
\nc{\sC}{{\mathsf{C}}}
\nc{\sD}{{\mathsf{D}}}
\nc{\sF}{{\mathsf{F}}}
\nc{\sG}{{\mathsf{G}}}
\nc{\sH}{{\mathsf{H}}}
\nc{\sK}{{\mathsf{K}}}
\nc{\sk}{{\mathsf{k}}}
\nc{\sM}{{\mathsf{M}}}
\nc{\sO}{{\mathsf{O}}}
\nc{\sW}{{\mathsf{W}}}
\nc{\sQ}{{\mathsf{Q}}}
\nc{\sP}{{\mathsf{P}}}
\nc{\sR}{{\mathsf{R}}}
\nc{\sZ}{{\mathsf{Z}}}
\nc{\sfp}{{\mathsf{p}}}
\nc{\sfq}{{\mathsf{q}}}
\nc{\sr}{{\mathsf{r}}}
\nc{\bk}{{\mathsf{k}}}
\nc{\sg}{{\mathsf{g}}}
\nc{\sff}{{\mathsf{f}}}
\nc{\sfb}{{\mathsf{b}}}
\nc{\sfc}{{\mathsf{c}}}
\nc{\sd}{{\mathsf{d}}}
\nc{\BK}{{\bar{K}}}
\nc{\tA}{{\widetilde{\mathbf{A}}}}
\nc{\tB}{{\widetilde{\mathcal{B}}}}
\nc{\tg}{{\widetilde{\mathfrak{g}}}}
\nc{\tG}{{\widetilde{G}}}
\nc{\TM}{{\widetilde{\mathbb{M}}}{}}
\nc{\tO}{{\widetilde{\mathsf{O}}}{}}
\nc{\tU}{{\widetilde{\mathfrak{U}}}{}}
\nc{\TZ}{{\tilde{Z}}}
\nc{\tx}{{\tilde{x}}}
\nc{\tbv}{{\tilde{\bv}}}
\nc{\tfP}{{\widetilde{\mathfrak{P}}}{}}
\nc{\tz}{{\tilde{\zeta}}}
\nc{\tmu}{{\tilde{\mu}}}
\nc{\urho}{\underline{\rho}}
\nc{\uB}{\underline{B}}
\nc{\uC}{{\underline{\mathbb{C}}}}
\nc{\ui}{\underline{i}}
\nc{\uj}{\underline{j}}
\nc{\ofP}{{\overline{\mathfrak{P}}}}
\nc{\oB}{{\overline{\mathcal{B}}}}
\nc{\og}{{\overline{\mathfrak{g}}}}
\nc{\oI}{{\overline{I}}}
\nc{\eps}{\varepsilon}
\nc{\hrho}{{\hat{\rho}}}
\nc{\one}{{\mathbf{1}}}
\nc{\two}{{\mathbf{t}}}
\nc{\Rep}{{\mathop{\operatorname{\rm Rep}}}}
\nc{\Tot}{{\mathop{\operatorname{\rm Tot}}}}
\nc{\Ker}{{\mathop{\operatorname{\rm Ker}}}}
\nc{\im}{{\mathop{\operatorname{\rm Im}}}}
\nc{\Hilb}{{\mathop{\operatorname{\rm Hilb}}}}
\nc{\End}{{\mathop{\operatorname{\rm End}}}}
\nc{\Ext}{{\mathop{\operatorname{\rm Ext}}}}
\nc{\CHom}{{\mathop{\operatorname{{\mathcal{H}}\it om}}}}
\nc{\GL}{{\mathop{\operatorname{\rm GL}}}}
\nc{\gr}{{\mathop{\operatorname{\rm gr}}}}
\nc{\HN}{{\mathop{\operatorname{\rm HN}}}}
\nc{\Id}{{\mathop{\operatorname{\rm Id}}}}
\nc{\de}{{\mathop{\operatorname{\rm def}}}}
\nc{\length}{{\mathop{\operatorname{\rm length}}}}
\nc{\supp}{{\mathop{\operatorname{\rm supp}}}}
\nc{\Cliff}{{\mathsf{Cliff}}}
\nc{\Fl}{\on{Fl}}
\nc{\Fib}{{\mathsf{Fib}}}
\nc{\Coh}{{\on{Coh}}}
\nc{\QCoh}{{\on{QCoh}}}
\nc{\IndCoh}{{\on{IndCoh}}}
\nc{\FCoh}{{\mathsf{FCoh}}}
\nc{\reg}{{\text{\rm reg}}}
\nc{\cplus}{{\mathbf{C}_+}}
\nc{\cminus}{{\mathbf{C}_-}}
\nc{\cthree}{{\mathbf{C}_\bullet}}
\nc{\Qbar}{{\bar{Q}}}
\nc\Eis{\on{Eis}}
\nc\Eisb{\ol\Eis{}}
\nc\Eisr{\on{Eis}^{rat}{}}
\nc\wh{\widehat}
\nc{\Def}{\on{Def_{\check{\fb}}(E)}}
\nc{\barZ}{\overline{Z}{}}
\nc{\barbarZ}{\overline{\barZ}{}}
\nc{\barpi}{\overline\pi}
\nc{\barbarpi}{\overline\barpi}
\nc{\barpip}{\overline\pi{}^+}
\nc{\barpim}{\overline\pi{}^-}
\nc{\fq}{\mathfrak q}
\nc{\fqb}{\ol{\sfq}{}}
\nc{\fpb}{\ol{\sfp}{}}
\nc{\fpr}{{\sfp^{rat}}{}}
\nc{\fqr}{{\sfq^{rat}}{}}
\nc{\hattimes}{\wh\otimes}
\nc{\bh}{{\bar{h}}}
\nc{\bOmega}{{\overline{\Omega(\check \fn)}}}
\nc{\seq}[1]{\stackrel{#1}{\sim}}
\nc{\cT}{{\check{T}}}
\nc{\cG}{{\check{G}}}
\nc{\cM}{{\check{M}}}
\nc{\cB}{{\check{B}}}
\nc{\ct}{{\check{\mathfrak t}}}
\nc{\cg}{{\check{\fg}}}
\nc{\cb}{{\check{\fb}}}
\nc{\cn}{{\check{\fn}}}
\nc{\cLambda}{{\check\Lambda}}
\nc{\cla}{{\check\lambda}}
\nc{\cmu}{{\check\mu}}
\nc{\cnu}{{\check\nu}}
\nc{\ceta}{{\check\eta}}
\nc{\DefbE}{{\on{Def}_{\cB}(E_\cT)}}
\nc{\imathb}{{\ol{\imath}}}
\nc{\rlr}{\overset{\longrightarrow}{\underset{\longrightarrow}\longleftarrow}}
\nc{\oBun}{\overset{\circ}\Bun}
\nc{\LocSys}{\on{LocSys}}
\nc{\BunBbb}{\ol{\ol{Bun}}_B}
\nc{\BunBr}{\Bun_B^{rat}}
\nc{\BunBrsg}{\Bun_B^{rat,\on{s.g.}}}
\nc{\BunBrp}{\Bun_B^{rat,polar}}
\nc{\BunBrpbg}{\Bun_B^{rat,polar,\on{b.g.}}}
\nc{\BunBrpsg}{\Bun_B^{rat,polar,\on{s.g.}}}
\nc{\BunTrp}{\Bun_T^{rat,polar}}
\nc{\BunTrpbg}{\Bun_T^{rat,polar,\on{b.g.}}}
\nc{\BunTrpsg}{\Bun_T^{rat,polar,\on{s.g.}}}
\nc{\BunNr}{\Bun_N^{rat}}
\nc{\BunNre}{\Bun_N^{enh,rat}}
\nc{\BunTr}{\Bun_T^{rat}}
\nc{\Vect}{{\on{Vect}}}
\nc{\Whit}{\on{Whit}}
\nc{\CTb}{\ol{\on{CT}}}
\nc{\Ran}{\on{Ran}}
\nc{\CTr}{\on{CT}^{rat}{}}
\nc\jmathr{\jmath^{rat}{}}
\nc{\ux}{\underline{x}}
\nc{\clambda}{{\check\lambda}}
\nc{\calpha}{{\check\alpha}}
\nc{\ind}{{\mathbf{ind}}}
\nc{\oblv}{{\mathbf{oblv}}}
\nc{\ox}{{\overline{x}}}
\nc{\cLa}{\check{\Lambda}}
\nc{\StinftyCat}{\on{DGCat}}
\nc{\inftyCat}{\infty\on{-Cat}}
\nc{\inftygroup}{\infty\on{-Grpd}}
\nc{\Dmod}{\on{D-mod}}
\nc{\CMaps}{{\mathcal Maps}}
\nc{\Maps}{\on{Maps}}
\nc{\affSch}{\on{Sch}^{\on{aff}}}
\nc{\dr}{{\on{dR}}}
\nc{\oCY}{\overset{\circ}\CY}
\nc{\leqG}{\underset{G}\leq}
\nc{\leqM}{\underset{M}\leq}
\nc{\leqGad}{\underset{G_{ad}}\leq}
\nc{\leqMad}{\underset{M_{ad}}\leq}
\nc{\Tr}{\on{Tr}}
\nc{\Frob}{\on{Frob}}
\nc{\DGCat}{\on{DGCat}}
\nc{\tDGCat}{2\on{-DGCat}}
\nc{\ev}{\on{ev}}
\nc{\mmod}{\on{-}\mathbf{mod}}
\nc{\sotimes}{\overset{!}\otimes}
\nc{\Sht}{{\on{Sht}}}
\nc{\Res}{{\on{Res}}}
\nc{\Av}{{\on{Av}}}
\nc{\Ind}{{\on{Ind}}}
\nc{\coInd}{{\on{coInd}}}
\nc{\ul}{\underline}
\nc{\Se}{\on{Se}}
\nc{\Ps}{\on{Ps}}
\nc{\Shv}{\on{Shv}}
\nc{\PsId}{\on{Ps-Id}}
\nc{\Funct}{\on{Funct}}
\title[An analog of the Deligne-Lusztig duality for $(\fg,K)$-modules]{An analog of the Deligne-Lusztig duality for $(\fg,K)$-modules}
\author{Dennis Gaitsgory and Alexander Yom Din}
\begin{document}

\maketitle

\tableofcontents

\section*{Introduction}

\ssec{Pseudo-identity, Deligne-Lustig functor and dualities}

\sssec{}  Recently, a number of papers have appeared where connections were found between the following objects:

\medskip

\noindent--The Deligne-Lusztig functor on the category of representations of a $\fp$-adic group;

\medskip

\noindent--The composition of contragredient and cohomological dualities;

\medskip

\noindent--The pseudo-identity functor on the category of D-modules/sheaves on $\Bun_G$.

\medskip

\noindent--The ``strange" operator of Drinfeld-Wang that acts on the space of automorphic functions.  

\medskip

Let us explain what these relations are. 

\sssec{}  \label{sss:intro DL}

First, in the paper \cite{BBK}, the authors consider the (derived) category $G(\bK)\mod$ of (say, admissible) representations of a $\fp$-adic group $G(\bK)$
(here $G$ is a reductive group and $\bK$ is a non-archimedian local field).
The Deligne-Lusztig functor is defined by sending a representation $\CM$ to the complex
\begin{equation} \label{e:DL}
\on{DL}(\CM):=\CM \to \underset{P}\oplus\, i^G_P\circ r^G_P(\CM) \to...\to i^G_B\circ r^G_B(\CM),
\end{equation} 
where $(r^G_P,i^G_P)$ is the adjoint pair corresponding to parabolic induction and Jacquet functor
(for a parabolic $P$), and where in the $k$-th term of the complex, the direct sum is taken over parabolics
of co-rank $k$. 

\medskip

The main theorem in that paper says that the functor $\on{DL}$ is canonically isomorphic to the composition of
\emph{contragredient} and \emph{cohomological dualities}, i.e., 
$$\on{DL}\simeq \BD^{\on{coh}}\circ \BD^{\on{contr}},$$
where $\BD^{\on{cont}}$ sends $\CM$ to its admissible dual $\CM^\vee$, and
\begin{equation} \label{e:coh dual}
\BD^{\on{coh}}(\CM):= \on{RHom}_\bG(\CM,\CH),
\end{equation} 
where $\CH$ is the regular representation of the Hecke algebra (i.e., the space of compactly supported smooth functions on $G(\bK)$).  

\medskip

We also note the following:

\medskip

\noindent(1) It is more or less tautological that the composition 
$$ \BD^{\on{contr}}\circ \BD^{\on{coh}}:G(\bK)\mod\to G(\bK)\mod$$
is isomorphic to the \emph{Serre functor} $\Se_{G(\bK)\mod}$ on $G(\bK)\mod$ (see \secref{ss:Se} for what we mean by \emph{the} Serre functor).

\medskip

Thus, one can reformulate the main result of \cite{BBK} as saying that the functors $\on{DL}$ and $\Se_{\bG\mod}$
are mutually inverse. 

\medskip

\noindent(2) The key idea in the proof of this theorem is to use the De Concino-Procesi (a.k.a., \emph{wonderful}) compactification $\ol{G}$ of $G$. 

\medskip

\noindent(3) The functors $\on{DL}$ for $G$ and the Levi $M$ corresponding to a given parabolic 
make the following diagram commute (up to a cohomological shift):
\begin{equation} \label{e:DL and ind}
\CD
G(\bK)\mod  @>{\on{DL}_G}>> G(\bK)\mod  \\
@A{i^G_P}AA  @AA{i^G_{P^-}}A  \\
M(\bK)\mod @>{\on{DL}_M}>> M(\bK)\mod,
\endCD
\end{equation} 
where $i^G_{P^-}$ is the induction functor, taken with respect to the opposite parabolic. 

\sssec{}  \label{sss:intro mirac}

Second, the paper \cite{Ga2} studies the category of D-modules/sheaves on the moduli stack $\Bun_G$ of principal $G$-bundles over
a global curve. 

\medskip

Since $\Bun_G$ is \emph{not} quasi-compact the phenomenon of ``divergence at infinity" must be taken into account. One
considers two versions of the (derived) category of D-modules/sheaves:
$$\Shv_0(\Bun_G)  \text { and } \Shv(\Bun_G),$$
where the latter is the (naturally defined) category of all D-modules/sheaves, and the former is the full subcategory that consists
of objects that are !-extended from quasi-compact open substacks.

\medskip

An arbitrary object in $\Shv(\Bun_G\times \Bun_G)$ defines a functor
$$\Shv_0(\Bun_G)  \to \Shv(\Bun_G),$$
and let us temporarily fix the conventions\footnote{This choice is made so that it is easy to make a connection 
with the papers \cite{DW,Wa}. However, in the main body of the paper, our conventions will be Verdier dual
to the ones above.} so that the object
$$(\Delta_{\Bun_G})_!(k_{\Bun_G})\in \Shv(\Bun_G\times \Bun_G)$$
(here $k_{\Bun_G}$ denotes the constant sheaf on $\Bun_G$) defines the tautological embedding 
$$\Shv_0(\Bun_G)  \hookrightarrow \Shv(\Bun_G)$$

One introduces the \emph{pseudo-identity} functor
$$\on{Ps-Id}_{\Bun_G}:\Shv_0(\Bun_G)  \to \Shv(\Bun_G)$$
to be the one given by the object
\begin{equation} \label{e:* sheaf}
(\Delta_{\Bun_G})_*(k_{\Bun_G})\in \Shv(\Bun_G\times \Bun_G).
\end{equation}

The point here is that since $\Bun_G$ is a stack and not a scheme, the diagonal map is not a closed embedding,
so that functors $(\Delta_{\Bun_G})_*$ and $(\Delta_{\Bun_G})_!$ are different. This definition makes sense not just
for $\Bun_G$, but for an arbitrary algebraic stack $\CY$. 

\medskip

The main result of the paper \cite{Ga2} is that the functor $\on{Ps-Id}_{\Bun_G}$, combined with usual Verdier
duality can be extended to an equivalence
$$\Shv(\Bun_G)^\vee\simeq \Shv(\Bun_G),$$
where $\bC\mapsto \bC^\vee$ is the operation of passage to the dual category\footnote{The above assertion should
be taken literally when $\Shv(-)$ is understood as $\Dmod(-)$, while appropriate modifications need to be
made in other sheaf-theoretic contexts.} (see \secref{sss:DG categ} below). 

\medskip

A few remarks are in order:

\medskip

\noindent(i) The object $(\Delta_{\Bun_G})_*(k_{\Bun_G})$ defining $\on{Ps-Id}_{\Bun_G}$ can be described
using the wonderful compactification $\ol{G}$ of $G$;

\medskip

\noindent(ii) Using (i), one can express $\on{Ps-Id}_{\Bun_G}$ as a complex whose
terms are compositions of Constant Term, Eisenstein functors and certain intertwining functors
$$\Shv_0(\Bun_G)\overset{\on{CT}^G_P}\longrightarrow \Shv(\Bun_M)
\overset{\Upsilon}\longrightarrow \Shv(\Bun_M) \overset{\Eis^G_{P^-}}\longrightarrow \Shv(\Bun_G).$$

\medskip

\noindent(iii) One of the key ingredients in the proof of the main result of \cite{Ga2} is the
commutativity of the following diagram (up to a cohomological shift):

\begin{equation} \label{e:funct equation}
\CD
\Shv_0(\Bun_G)  @>{\on{Ps-Id}_{\Bun_G}}>>  \Shv(\Bun_G) \\
@A{\wt\Eis^G_P}AA   @AA{\Eis^G_{P^-}}A \\
\Shv_0(\Bun_M)  @>{\on{Ps-Id}_{\Bun_M}}>>  \Shv(\Bun_M),
\endCD
\end{equation}
where $\wt\Eis^G_P$ is the Verdier-conjugate of the Eisenstein operator $\Eis^G_P$.

\sssec{}

Third, the papers \cite{DW} and \cite{Wa} consider a global function field $K$, and the spaces 
$$\on{Funct}^{\on{sm}}_0(G(\BA)/G(K)) \text{ and } \on{Funct}^{\on{sm}}(G(\BA)/G(K))$$
of compactly supported (resp., all) smooth functions on the automorphic quotient $G(\BA)/G(K)$.  

\medskip

Using the wonderful compactification $\ol{G}$, the authors define a certain ``strange" operator
$$L:\on{Funct}^{\on{sm}}_0(G(\BA)/G(K)) \to \on{Funct}^{\on{sm}}(G(\BA)/G(K))$$
that commutes with the $G(\BA)$-action.

\medskip

The key features of the operator $L$ are as follows:

\medskip

\noindent(a) The operator $L$ can be expressed as an alternating sum of the operators
$$\Eis^G_{P^-}\circ \Upsilon\circ \on{CT}^G_P,$$
where $\Eis^G_P$ and $\on{CT}^G_P$ are the Eisenstein and Constant Term operators, and $\Upsilon$
is a certain intertwining operator. 

\medskip

\noindent(b) When one considers non-ramified functions, the operator $L$ is given by a function on 
$$G(\BO)\backslash G(\BA)/G(K)\times G(\BO)\backslash G(\BA)/G(K)$$
that equals the trace of the Frobenius of the sheaf \eqref{e:* sheaf}. I.e., the functor $\on{Ps-Id}_{\Bun_G}$ 
and the operator $L$ match up via the sheaf-function correspondence. 

\sssec{}

The three situations described above are formally related as follows:

\medskip

The papers \cite{DW,Wa} can be thought of as being a global counterpart for \cite{BBK}. The paper \cite{Ga2}
is un upgrade of \cite{DW,Wa} to a categorical level. 

\medskip

In the present paper we develop an analog of the Deligne-Lusztig functor for the category of $(\fg,K)$-modules. 
The connection to the papers mentioned above is as follows:

\medskip

On the one hand, we regard the category of $(\fg,K)$-modules as an archimedian countrepart of $G(\bK)\mod$.
On the other hand, when we interpret $(\fg,K)$-modules through the localization equivalence, 
this category exhibits many features parallel to $\Shv(\Bun_G)$. 
And finally, it should play a role in the generalization of \cite{Wa} to the case of number
fields. 

\ssec{The present work}

\sssec{}

The object of study of the present paper is the (derived) category of $(\fg,K)$-modules for a symmetric pair $(G,\theta)$,
with a given central character $\chi$. We denote this category $\fg\mod^K_\chi$.

\medskip

We introduce an endo-functor of 
$$\PsId_{\fg\mod^K_\chi}:\fg\mod^K_\chi\to \fg\mod^K_\chi,$$
which is a direct analog of the functor $\PsId_{\Bun_G}$.
In fact, when we interpret $\fg\mod^K_\chi$ via the localization equivalence as the category of twisted D-modules on
$K\backslash X$ (here $X$ is the flag variety of $G$), the functor $\PsId_{\fg\mod^K_\chi}$ corresponds to the
functor $\PsId_{K\backslash X}$ for the stack $K\backslash X$, see \secref{sss:intro mirac}. 

\medskip

We consider $\PsId_{\fg\mod^K_\chi}$ as an analog of the Deligne-Lusztig functor in the context of $(\fg,K)$-modules.
The main results of this paper establish (or conjecture) various properties of $\PsId_{\fg\mod^K_\chi}$ that support
this analogy.

\sssec{}

Here is the summary of our main results:

\medskip

\noindent--We show that the functor $\PsId_{\fg\mod^K_\chi}$ is a self-equivalence of $\fg\mod^K_\chi$, which is the
inverse of the Serre endofunctor $\Se_{\fg\mod^K_\chi}$ (this is \thmref{t:gH miraculous}).  

\medskip

\noindent--In fact, we give a general criterion, when for a DG category $\bC$, the functors $\PsId_\bC$ and $\Se_\bC$
are mutually inverse equivalences (this is \corref{c:main}). 

\medskip

\noindent--We show that $\PsId_{\fg\mod^K_\chi}$ is canonically isomorphic (up to a certain twist) to the composition 
$$\BD^{\on{can}}\circ \BD^{\on{contr}},$$
where $\BD^{\on{can}}$ is the cohomological duality of $\fg\mod^K_\chi$ (which is a direct analog of \eqref{e:coh dual}
for $(\fg,K)$-modules), and $\BD^{\on{contr}}$ is the extension to the derived category of the usual contragredient
duality functor (this is \thmref{t:descr contr}).

\medskip

\noindent--We propose a certain conjecture, which we regard as the analog for $(\fg,K)$-modules of Bernstein's
``2nd adjointness" theorem. We show that this conjecture is equivalent to 
an analog for $(\fg,K)$-modules of the commutation of the diagrams \eqref{e:DL and ind} and \eqref{e:funct equation}.

\medskip

\noindent--We run a plausibility test on our ``2nd adjointness" conjecture, and show that at the level of abelian categories
it reproduces a result of A.~W.~Casselman, D.~Milicic, H.~Hecht and W.~Schmid on the behavior of asymptotics of representations under the 
contragredient duality operation.

\sssec{}

Here are some directions that we do \emph{not} pursue in this paper, but which seem attractive:

\medskip

\noindent--One would like to express the functor $\PsId_{\fg\mod^K_\chi}$ (or its geometric counterpart  
$\PsId_{K\backslash X}$) in a way similar to \eqref{e:DL}, i.e., as a 
complex whose terms are compositions of the Casselman-Jacquet functors and induction functors for 
$\theta$-compatible parabolics in $G$. (This necessitates generalizing the results of \cite{Kim} to the case
when instead of the minimal $\theta$-compatible parabolic, we consider an arbitrary parabolic.) 

\medskip

\noindent--One would like to find an expression for $\PsId_{\fg\mod^K_\chi}$ (or its geometric counterpart  
$\PsId_{K\backslash X}$) via the wonderful compactification $\ol{G}$.

\medskip

\noindent--One would like to generalize the constructions of \cite{Wa} to the number field case, and find 
the relation between his operator $L$ and our functor $\PsId_{\fg\mod^K_\chi}$ at the archimedian primes
(at non-archimedian primes, the ingredients of \cite{BBK} play a role in J.~Wang's constructions). 

\ssec{What is actually done in this paper}

\sssec{}

The main body of this paper begins with \secref{s:Se and Ps}, where we discuss the general formalism 
of Serre and pseudo-identity \emph{operations} and \emph{functors}.

\medskip

For a pair of DG categories $\bC$ and $\bD$, the Serre and the pseudo-identity \emph{operations} are both contravariant  functors
$$\Se,\Ps: \on{Funct}_{\on{cont}}(\bC,\bD)\to \on{Funct}_{\on{cont}}(\bD,\bC),$$
which take colimits to limits.

\medskip

The Serre and pseudo-identity functors are defined by
$$\Se_\bC:=\Se(\on{Id}_\bC) \text{ and } \PsId_\bC:=\Ps(\on{Id}_\bC).$$

\medskip

The main results of this section are \propref{p:main} and \corref{c:main}. The former says that if certain
finiteness conditions are satisfied (preservation of compactness), for a continuous functor $F:\bC\to \bD$ we 
have
$$\Se_\bC\circ \Ps(F)\circ \Se_\bD\simeq \Se(F).$$

The latter says that if $\bC$ satisfies a certain finiteness condition, then the functors $\Se_\bC$ and $\PsId_\bC$
are mutually inverse equivalences.

\sssec{}

In \secref{s:ex} we show that some DG categories that naturally arise in geometric representation theory
satisfy the assumption of \corref{c:main} mentioned above; in particular, for
such categories the Serre and the pseudo-identity functors are mutually inverse equivalences.

\medskip

One set of examples consists of categories of (twisted) D-modules on algebraic stacks $\CY$ that have
finitely many isomorphism classes of points. For example, a stack of the form $\CY:=H\backslash Y$,
where $H$ is an algebraic group acting on a scheme $Y$ with finitely many orbits has this property.

\medskip

We also consider a variant, where we have a $T$-torsor $\wt\CY\to \CY$ (where $T$ is a torus), and we consider 
$\lambda$-monodromic D-modules on $\wt\CY$ for a character $\lambda\in \ft^*$. 

\medskip

Another set of examples comes from representations of Lie algebras. Let $G$ be a reductive group with Lie
algebra $\fg$; fix a character $\chi$ of $Z(\fg):=Z(U(\fg))$. We show that if $H\subset G$ is spherical
(i.e., has finitely many orbits on the flag variety $X$ of $G$), then the corresponding category
$$\fg\mod^H_\chi$$
(i.e., the derived version of the category of $(\fg,H)$-modules with the fixed central character $\chi$) 
satisfies the assumptions of \corref{c:main}. In particular, the Serre and the pseudo-identity functors are mutually 
inverse equivalences for $\fg\mod^H_\chi$. 

\medskip

We also establish a variant of this result, where instead of $\fg\mod^H_\chi$, we consider $\fg\mod^H_{\{\chi\}}$,
i.e., we let our modules have a \emph{generalized} central character $\chi$.  

\medskip

As a byproduct (and using a result from \cite{Kim}) we reprove the result from \cite{BBM} that says that the
Serre functor on category $\CO$ (or, equivalently, on the category of $N$-equivariant twisted D-modules
on the flag variety) is given by the square of long intertwining functor.  

\sssec{}

In \secref{s:contr} we study the functors obtained by composing the canonical (=cohomological) duality\footnote{In the formula 
below and elsewhere, the notation $\bC^c$ means the subcategory of compact objects
in a given DG category $\bC$.}

$$\BD^{\on{can}}_{\fg,H}:(\fg\mod_\chi^H)^c\to (\fg\mod_{-\chi}^H)^c$$
with the Serre functor on $(\fg\mod_{-\chi}^H)^c$ (and also a twist by a certain 
determinant line).

\medskip

We consider the following cases:

\medskip

\noindent(i) $H=G$, so that $\fg\mod_\chi^H$ is $\Rep(G)$, the category of algebraic representations of $G$; 

\medskip

\noindent(ii) $H=N$, the unipotent radical of the Borel, so that $\fg\mod_\chi^H$ is category $\CO$; 

\medskip

\noindent(iii) $H=K$, where $K=G^\theta$ in the case of a symmetric pair, so that $\fg\mod_\chi^H$
is the derived category of $(\fg,K)$-modules; 

\medskip

\noindent(iv) $H=M_K\cdot N$, also in the case of a symmetric pair, where $N$ is now the unipotent radical of a $\theta$-minimal
parabolic $P$, and $M_K=K\cap P$.

\medskip

We show that, after ind-extension, in cases (i) and (iii) above, the resulting functor 
$$\fg\mod_\chi^H \to (\fg\mod_{-\chi}^H)^{\on{op}}$$ 
is a derived version of the usual contragredient duality functor. 

\medskip

In cases (ii) and (iv), the same happens after we compose with the corresponding long intertwining functor
$$\fg\mod_{-\chi}^N\to \fg\mod_{-\chi}^{N^-} \text{ and } \fg\mod_{-\chi}^{M_K\cdot N}\to \fg\mod_{-\chi}^{M_K\cdot N^-}$$
(note that in these cases, contragredient duality naturally replaces the the local finiteness condition with respect to $N$
by that with respect to $N^-$). 
 
\ssec{Principal series and the ``2nd adjointness" conjecture}
 
\sssec{}

In \secref{s:2nd adj} we study the two (mutually Verdier conjugate) principal series functors
$$\Av^{K/M_K}_! \text{ and } \Av^{K/M_K}_*$$
that map
$$\fg\mod_\chi ^{M_K\cdot N}\to \fg\mod_\chi^K,$$
and their various adjoints. The above two functors are (loose) analogs of the two Eisenstein series functors
$\Eis^G_P$ and $\wt\Eis^G_P$ in \eqref{e:funct equation}.

\medskip

The functor $\Av^{K/M_K}_!$ is naturally the \emph{left} adjoint of the functor
$$\Av^N_*:\fg\mod_\chi^K\to \fg\mod_\chi ^{M_K\cdot N},$$
and the functor $\Av^{K/M_K}_*$ is naturally the \emph{right} adjoint of the functor
$$\Av^N_!:\fg\mod_\chi^K\to \fg\mod_\chi^{M_K\cdot N}.$$

\sssec{}

We recall the ``2nd adjointness" conjecture from \cite{Kim}, which says that the functor
$\Av^{K / M_K}_*$ is also the \emph{left} adjoint (up to a certain determinant line) of the functor
$$\Av^N_*\circ \Av^{N^-}_!:\fg\mod_\chi^K\to \fg\mod_\chi^{M_K\cdot N}.$$

The functor $\Av^N_*\circ \Av^{N^-}_!$ that appears above is known as the \emph{Casselman-Jacquet}
functor; we denote it by $J^-$.

\medskip

An equivalent formulation of the ``2nd adjointness" conjecture is that there exists a canonical
isomorphism between
$$\Av^{K/M_K}_* \circ  \Av^N_!\text{ and } \Av^{K/M_K}_!$$
as functors 
$$\fg\mod_\chi^{M_K\cdot N^-}\rightrightarrows \fg\mod_\chi^K$$
(up to a certain determinant line). 

\sssec{}

We also prove that there exists a canonical isomorphism (up to a cohomological shift) between 
$$\PsId_{\fg\mod_\chi^K}\circ \Av^{K/M_K}_* \text{ and } \Av^{K/M_K}_!$$
as functors $\fg\mod_\chi^{M_K\cdot N^-}\rightrightarrows \fg\mod_\chi^K$. 

\medskip

Juxtaposing, we obtain that the ``2nd adjointness" conjecture is equivalent to the fact that 
the following diagram commutes (up to a certain determinant line and a cohomological shift):
$$
\CD
\fg\mod_\chi^K  @<{\Av^{K/M_K}_*}<< \fg\mod_\chi^{M_K\cdot N} \\
@V{\on{Ps-Id}_{\fg\mod_\chi^K}}VV   @VV{\Av^{N^-}_!}V  \\
\fg\mod_\chi^K  @<{\Av^{K/M_K}_*}<< \fg\mod_\chi^{M_K\cdot N^-}.
\endCD
$$

We consider this to be a (loose) analog of the commutative diagram \eqref{e:funct equation}. 

\sssec{}

Finally, we show that our ``2nd adjointness" conjecture is equivalent to an isomorphism of functors
$$\BD_{\fg,\chi}^{\on{contr}}\circ J\simeq J^-\circ \BD_{\fg,\chi}^{\on{contr}}:
(\fg\mod_\chi^K)^c\rightrightarrows  (\fg\mod_{-\chi}^{M_K\cdot N})^c,$$
where $J$ is the counterpart of $J^-$ where we swap $P$ for $P^-$. The latter isomorphism is known
at the level of abelian categories, for $k = \mathbb{C}$, due to Casselman, Milicic and later Hecht and Schmid (see \cite{Ca}, \cite{M}, \cite{HS}). Their approach is anayltic, using asymptotics of matrix coefficients.

\ssec{Organization of the paper}

\sssec{}

In \secref{s:Se and Ps} we discuss the general formalism of Serre and pseudo-identity operations and functors for DG categories. 

\sssec{}

In \secref{s:ex} we consider examples of DG categories that come from geometry and representation theory, and show
that for some of these categories, the Serre and pseudo-identity functors are mutually inverse. 

\sssec{}

In \secref{s:contr} we relate the Serre functor on certain representation-theoretic categories to the functor of
contragredient duality.

\sssec{}

In \secref{s:2nd adj} we relate the material of the previous sections of this paper to an analog of the ``2nd adjointness"
conjecture for $(\fg,K)$-modules. 

\ssec{Notation and conventions}

The conventions in this paper follow those of \cite{Kim}. 

\sssec{}

Throughout the paper we will working over a ground field $k$, assumed algebraically closed and of characteristic $0$.
We let $G$ be a connected reductive algebraic group over $k$, and we let $X$ denote the flag variety of $G$. 

\sssec{}  

We will be working with DG categories over $k$ (see \cite[Chapter 1, Sect. 10]{GR2} for a concise summary of DG categories).
All functors between DG categories are assumed to be exact, i.e., preserving finite limits (equivalently, colimits or cones). 

\medskip

All DG categories will be assumed \emph{cocomplete} (i.e., contain infinite direct sums). Unless specified otherwise, 
when discussing a functor between two DG categories, we will assume that this functor is \emph{continuous}, i.e.,
preserves infinite direct sums (equivalently, colimits or filtered colimits). 

\medskip

We denote by $\Vect$ the DG category of complexes of vector spaces. For a DG category $\bC$ and $\bc,\bc'\in \bC$,
we let $\CHom_\bC(\bc,\bc')\in \Vect$ denote the corresponding Hom complex.

\sssec{} 

In this paper, we will consider the operation of \emph{tensor product} of DG categories,
$$\bC_1,\bC_2\mapsto \bC_1\otimes \bC_2,$$
\cite[Chapter 1, Sect.10.4]{GR2}.

\medskip

This operation is functorial with respect to \emph{continuous} functors
$\bC_1\to \bC'_1$ and $\bC_2\to \bC'_2$. It makes the $\infty$-category of cocomplete DG categories 
and continuous functors into a symmetric monoidal $\infty$-category; the unit is given by 
DG category $\Vect$.

\medskip

In order to have tensor product, one does need to work with DG categories, rather than triangulated
categories. This is why the usage of higher algebra is unavoidable for this paper (unlike its
predecessor \cite{Kim}).  

\sssec{}  \label{sss:DG categ} 

In any symmetric monoidal $\infty$-category, given an object one can ask for its dualizability. This way we arrive
at the notion of \emph{dualzable} DG category.

\medskip

A duality data for a DG category $\bC$ is another DG category $\bC^\vee$ and a pair of functors
$$\langle-,-\rangle_\bC:\bC\otimes \bC^\vee\to \Vect$$
and 
$$\Vect\to \bC^\vee\otimes \bC$$
that satisfy the appropriate axioms. 

\medskip

It is known that every compactly generated category $\bC$ is dualizable. In this case $\bC^\vee$ is also
compactly generated and we have a canonical equivalence
$$(\bC^\vee)^c\simeq (\bC^c)^{\on{op}},$$
see \cite[Chapter 1, Proposition 7.2.3]{GR2}.

\medskip

Explicitly, the functor $(\bC^c)^{\on{op}}\to \bC^\vee$ is characterized by the requirement that the composition
$$\bC\times (\bC^c)^{\on{op}}\to \bC\times \bC^\vee\to \bC\otimes \bC^\vee \overset{\langle-,-\rangle_\bC}\longrightarrow \Vect$$
is given by
$$\bc,\bc'\mapsto \CHom_\bC(\bc',\bc).$$

\sssec{Derived algebraic geometry}

This paper will make a mild use of derived algebraic geometry; see \cite[Chapters 2 and 3]{GR2} for a brief summary,
in particular for our usage of the notation $\QCoh(-)$.

\medskip

All (derived) schemes will be assume laft (locally almost of finite type); see \cite[Chapters 2, Sect.1.7]{GR2} for
what this means. 

\sssec{D-modules}

Given a scheme/algebraic stack $\CY$, we will denote by $\Dmod(\CY)$ the DG category of D-modules on $\CY$;
see \cite{GR1}. 

\medskip

Given a twisting $\lambda$ on $\CY$, we will denote by $\Dmod_\lambda(\CY)$ the corresponding DG category
of twisted D-modules, see \cite[Sects. 6 and 7]{GR1}. 

\ssec{Acknowledgements}

The research of D.G. is supported by NSF grant DMS-1063470. A.Y.D. would like to thank Sam Raskin for very helpful discussions on higher algebra.

\section{The Serre and pseudo-identity functors} \label{s:Se and Ps}

In this section all categories will be assumed compactly generated. 

\ssec{The Serre operation}

\sssec{}

We introduce some terminology:

\medskip

We shall call a continuous functor $F:\bC\to \bD$ \emph{proper} if it maps compact objects to compact objects.
(Equivalently, if $F$ admits a \emph{continuous} right adjoint.)

\medskip

We shall say that $\bC$ is proper if the evaluation functor
\begin{equation} \label{e:ev} 
\bC\otimes \bC^\vee\to \Vect
\end{equation}
is proper. Equivalently, if for every $\bc,\bc'\in \bC^c$, the object $\CHom_\bC(\bc,\bc')\in \Vect$
is compact. 

\medskip

We introduce the dualization functor
$$\BD_\bC:\bC\to (\bC^\vee)^{\on{op}}$$
be requiring that it be the ind-extension of the tautological functor
$$\bC^c \simeq ((\bC^\vee)^c)^{\on{op}}\to  (\bC^\vee)^{\on{op}}.$$

\medskip

In other words, the functor $\BD_\bC$, when viewed as a \emph{contravariant} functor $\bC\to \bC^\vee$
sends colimits to limits and is the tautological functor on $\bC^c$.

\medskip

We shall say that $\bc\in \bC$ is \emph{reflexive} if the tautological map
$$\bc\to \BD_{\bC^\vee}\circ \BD_\bC(\bc)$$
is an isomorphism.

\medskip

For example, for $\bC=\Vect$, the functor $\BD_\Vect$ is the usual dualization functor 
$$V\mapsto V^*,$$
and $V\in \Vect$ is reflexive if and only if it has finite-dimensional cohomologies (but it may have
infinitely many non-vanishing cohomology groups). 

\medskip

We shall say that $\bC$ is \emph{reflexive} if the evaluation functor \eqref{e:ev} sends compact objects
to reflexive objects. Equivalently, if for every $\bc,\bc'\in \bC^c$, the object $\CHom_\bC(\bc,\bc')\in \Vect$
has finite-dimensional cohomologies. 

\sssec{}

Let $\bC$ and $\bD$ be DG categories. We define the Serre operation
$$\Se:\Funct_{\on{cont}}(\bC,\bD)\to \Funct_{\on{cont}}(\bD,\bC)^{\on{op}}$$
as follows.

\medskip

Namely, for $F\in \Funct_{\on{cont}}(\bC,\bD)$, $\bc\in \bC$ and $\bd\in \bD^c$ we set
$$\CHom_\bC(\bc,\Se(F)(\bd)):=\CHom_\bD(\bd,F(\bc))^*.$$

\sssec{}

First, we observe:

\begin{lem}
The functor $\Se$ preserves colimits. 
I.e., when viewed as a contravariant functor, $\Se$ takes colimits in 
$\Funct_{\on{cont}}(\bC,\bD)$ to limits in $\Funct_{\on{cont}}(\bD,\bC)$.
\end{lem}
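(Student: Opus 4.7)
The plan is to verify the limit assertion directly from the defining formula via Yoneda, exploiting the fact that colimits and limits in $\Funct_{\on{cont}}(\bC,\bD)$ and $\Funct_{\on{cont}}(\bD,\bC)$ are computed pointwise.

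First I would reduce the claim to a pointwise check. Given a colimit $F = \on{colim}_i F_i$ in $\Funct_{\on{cont}}(\bC,\bD)$, one wants $\Se(F) \simeq \lim_i \Se(F_i)$ in $\Funct_{\on{cont}}(\bD,\bC)$. Since limits of continuous functors are computed pointwise, it suffices to prove $\Se(F)(\bd) \simeq \lim_i \Se(F_i)(\bd)$ for every $\bd \in \bD$; and since a continuous functor $\bD \to \bC$ is determined by its restriction to $\bD^c$ (via ind-extension), it in fact suffices to check this for $\bd \in \bD^c$.

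Next I would use Yoneda to identify $\Se(F)(\bd)$ as an object of $\bC$. For $\bd \in \bD^c$ and any $\bc \in \bC$, the defining formula reads
$$\CHom_\bC(\bc, \Se(F)(\bd)) \simeq \CHom_\bD(\bd, F(\bc))^*.$$
Pointwise computation of colimits in $\Funct_{\on{cont}}(\bC,\bD)$ gives $F(\bc) \simeq \on{colim}_i F_i(\bc)$, and the compactness of $\bd$ yields
$$\CHom_\bD(\bd, \on{colim}_i F_i(\bc)) \simeq \on{colim}_i \CHom_\bD(\bd, F_i(\bc)).$$
Since $V \mapsto V^*$ on $\Vect$ sends colimits to limits, dualizing produces $\lim_i \CHom_\bD(\bd, F_i(\bc))^*$, which by the definition of $\Se$ applied to each $F_i$ and by the commutation of $\CHom_\bC(\bc,-)$ with limits equals $\CHom_\bC(\bc, \lim_i \Se(F_i)(\bd))$. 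By Yoneda this forces $\Se(F)(\bd) \simeq \lim_i \Se(F_i)(\bd)$.

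No step here is a substantive obstacle: the assertion is essentially forced by the definition, combined with the facts that compact objects preserve colimits in the Hom functor and that $\Vect$-dualization swaps colimits and limits. The only mild technical point is promoting the pointwise equivalence to an equivalence of functors, which is automatic from the naturality in $\bd$ and in the index $i$ built into the defining formula.
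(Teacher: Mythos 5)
Your proof is correct and is essentially the same as the paper's: both run the chain $\CHom_\bC(\bc,\Se(F)(\bd)) = \CHom_\bD(\bd,F(\bc))^*$, use compactness of $\bd$ to pull the colimit out of the Hom, dualize to turn it into a limit, and reidentify the result as $\CHom_\bC(\bc,\lim_i\Se(F_i)(\bd))$. One small caveat on your setup: the blanket claim that limits in $\Funct_{\on{cont}}(\bD,\bC)$ are computed pointwise is not true for arbitrary $\bd\in\bD$ (a pointwise limit of continuous functors need not be continuous); it is true on $\bD^c$ because $\Funct_{\on{cont}}(\bD,\bC)\simeq\Funct(\bD^c,\bC)$, which is exactly the restriction you then make, so the argument goes through.
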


\begin{proof}
For a colimit diagram 
$$(i\in I) \mapsto F_i, \quad \underset{i\in I}{\on{colim}}\,F_i =F,$$
in $\Funct_{\on{cont}}(\bC,\bD)$, and $\bc\in \bC$, $\bd\in \bD^c$, we have
\begin{multline*}
\CHom(\bc, (\underset{i\in I}{\on{lim}}\, \Se(F_i))(\bd)) 
=\underset{i\in I}{\on{lim}}\, \CHom(\bc, \Se(F_i)(\bd)) = \underset{i\in I}{\on{lim}}\, \CHom(\bd,F_i(\bc))^* =\\
= \left(\underset{i\in I}{\on{colim}}\, \CHom(\bd,F_i(\bc))\right){}^* =\CHom(\bd,F(\bc))^*=
\CHom(\bc, \Se(F)(\bd)),
\end{multline*}
as desired.

\end{proof}

\ssec{The Serre functor}  \label{ss:Se}

\sssec{}

We introduce the Serre endofunctor of $\bC$, denoted $\Se_\bC$, by setting
$$\Se_\bC:=\Se(\on{Id}_\bC).$$

We shall say that $\bC$ is \emph{Serre} if $\Se_\bC$ is an equivalence. 

\sssec{}

From the definitions we obtain:

\begin{lem} $(\Se_\bC)^\vee\simeq \Se_{\bC^\vee}$ as endofunctors of $\bC^\vee$. 
\end{lem}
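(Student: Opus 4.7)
The plan is to verify the identification on compact objects and then invoke continuity. Both $(\Se_\bC)^\vee$ and $\Se_{\bC^\vee}$ are continuous endofunctors of $\bC^\vee$ (the first because duality of functors preserves continuity; the second because $\Se_{\bC^\vee}$ is a Serre functor, constructed as an ind-extension from compacts), so it suffices to construct a functorial isomorphism between them on the subcategory $(\bC^\vee)^c \simeq (\bC^c)^{\op}$, i.e., on objects of the form $\BD_\bC(\bd)$ with $\bd \in \bC^c$. Further, a continuous functor $F: \bC^\vee \to \bC^\vee$ is determined, via duality, by the associated pairing $\bc, \bc^\vee \mapsto \langle \bc, F(\bc^\vee) \rangle_\bC$, so it is enough to identify these pairings.

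First I would record the elementary identity $\langle \bd', \bc^\vee\rangle_\bC = \CHom_{\bC^\vee}(\BD_\bC(\bd'), \bc^\vee)$ for $\bd' \in \bC^c$ and $\bc^\vee \in \bC^\vee$, which is obtained by applying the characterization $\langle \bc, \BD_\bC(\bd')\rangle_\bC = \CHom_\bC(\bd', \bc)$ with the roles of $\bC$ and $\bC^\vee$ interchanged (using the canonical identification $(\bC^\vee)^\vee \simeq \bC$). Then for $\bd, \bd' \in \bC^c$:
\begin{align*}
\langle \bd', (\Se_\bC)^\vee(\BD_\bC(\bd))\rangle_\bC
&= \langle \Se_\bC(\bd'), \BD_\bC(\bd)\rangle_\bC
= \CHom_\bC(\bd, \Se_\bC(\bd')) \\
&= \CHom_\bC(\bd', \bd)^*,
\end{align*}
where the last step uses the defining property of $\Se_\bC$ (with $\bc = \bd$). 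On the other hand,
\begin{align*}
\langle \bd', \Se_{\bC^\vee}(\BD_\bC(\bd))\rangle_\bC
&= \CHom_{\bC^\vee}(\BD_\bC(\bd'), \Se_{\bC^\vee}(\BD_\bC(\bd))) \\
&= \CHom_{\bC^\vee}(\BD_\bC(\bd), \BD_\bC(\bd'))^*
= \CHom_\bC(\bd', \bd)^*,
\end{align*}
where the second equality is the defining property of $\Se_{\bC^\vee}$ applied to the compact object $\BD_\bC(\bd) \in (\bC^\vee)^c$, and the last uses that $\BD_\bC$ is a contravariant equivalence on compacts. The two pairings agree, so $(\Se_\bC)^\vee(\BD_\bC(\bd)) \simeq \Se_{\bC^\vee}(\BD_\bC(\bd))$.

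All the isomorphisms above are natural in $\bd \in \bC^c$ (equivalently in $\BD_\bC(\bd) \in (\bC^\vee)^c$), so we obtain an isomorphism of restrictions to $(\bC^\vee)^c$, and ind-extension yields the desired isomorphism of continuous endofunctors. There is no real obstacle; the only point that needs care is the bookkeeping of conventions, namely keeping track of the symmetry $\langle-,-\rangle_\bC = \langle -,-\rangle_{\bC^\vee}$ under $(\bC^\vee)^\vee \simeq \bC$ and the contravariance $\CHom_{\bC^\vee}(\BD_\bC(\bd), \BD_\bC(\bd')) \simeq \CHom_\bC(\bd', \bd)$ on compacts.
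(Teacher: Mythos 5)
Your proof is correct, and since the paper itself states this lemma with only the comment ``From the definitions we obtain,'' your argument is precisely the routine definitional verification the paper has in mind: compute the pairing $\langle \bd', - \rangle_\bC$ against the two functors evaluated on compacts of $\bC^\vee$, reduce both to $\CHom_\bC(\bd',\bd)^*$ using the defining properties of $\Se$, $\BD_\bC$, and $(-)^\vee$, and then ind-extend.
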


\begin{cor}
$\bC$ is Serre if and only $\bC^\vee$ is.
\end{cor}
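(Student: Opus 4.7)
The plan is to deduce this directly from the preceding lemma by invoking a general principle about dualization of continuous endofunctors of dualizable categories. Since every compactly generated DG category is dualizable (as recalled in the Notation and conventions), the lemma provides a canonical identification $(\Se_\bC)^\vee \simeq \Se_{\bC^\vee}$, so the task reduces to checking that the property of being an equivalence is preserved and reflected under the operation $F \mapsto F^\vee$.

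First, I would record the general fact that for a dualizable DG category $\bC$ and a continuous endofunctor $F : \bC \to \bC$, the functor $F$ is an equivalence if and only if its dual $F^\vee : \bC^\vee \to \bC^\vee$ is. This is a formal consequence of the fact that dualization defines an involutive autoequivalence of the symmetric monoidal $\infty$-category of dualizable DG categories with continuous functors: the assignment $F \mapsto F^\vee$ is functorial and, under the canonical identification $(\bC^\vee)^\vee \simeq \bC$, satisfies $(F^\vee)^\vee \simeq F$. Hence $F \mapsto F^\vee$ is an equivalence of mapping spaces $\Funct_{\on{cont}}(\bC,\bC) \simeq \Funct_{\on{cont}}(\bC^\vee,\bC^\vee)^{\on{op}}$, and in particular it detects isomorphisms and equivalences.

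Applying this to $F = \Se_\bC$, we conclude that $\Se_\bC$ is an equivalence if and only if $(\Se_\bC)^\vee$ is. Combined with the lemma's isomorphism $(\Se_\bC)^\vee \simeq \Se_{\bC^\vee}$, this yields that $\Se_\bC$ is an equivalence if and only if $\Se_{\bC^\vee}$ is, which is exactly the assertion that $\bC$ is Serre iff $\bC^\vee$ is.

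There is no serious obstacle; the only minor point to be careful about is that one genuinely uses that we are on compactly generated (hence dualizable) DG categories so that the dualization functor on continuous endofunctors is well-defined and involutive. Everything else is formal bookkeeping.
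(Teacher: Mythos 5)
Your argument is correct and follows exactly the route the paper intends: the corollary is stated without proof as an immediate consequence of the preceding lemma $(\Se_\bC)^\vee \simeq \Se_{\bC^\vee}$, together with the standard fact that dualization of continuous functors between dualizable DG categories is involutive and hence detects equivalences. Your write-up simply makes that implicit step explicit.
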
 

\sssec{}

We now claim:

\begin{prop}
The functor $\Se_\bC|_{\bC^c}$ is fully faithful if and only if $\bC$ is reflexive.  
\end{prop}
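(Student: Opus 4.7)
The plan is to compute $\CHom_\bC(\Se_\bC(\bc_1),\Se_\bC(\bc_2))$ for $\bc_1,\bc_2\in\bC^c$ by applying the defining isomorphism of $\Se$ twice, and then identify the map induced by $\Se_\bC$ with the canonical double dualization in $\Vect$.

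First, I would apply the defining property
\[
\CHom_\bC(\bc,\Se_\bC(\bc'))\;\simeq\;\CHom_\bC(\bc',\bc)^*,\qquad \bc\in\bC,\ \bc'\in\bC^c,
\]
with $\bc=\Se_\bC(\bc_1)$ and $\bc'=\bc_2$, which gives
$\CHom_\bC(\Se_\bC(\bc_1),\Se_\bC(\bc_2))\simeq \CHom_\bC(\bc_2,\Se_\bC(\bc_1))^*$. Then I would apply the same property again with $\bc=\bc_2$ and $\bc'=\bc_1$ to obtain a natural identification
\[
\CHom_\bC(\Se_\bC(\bc_1),\Se_\bC(\bc_2))\;\simeq\;\bigl(\CHom_\bC(\bc_1,\bc_2)^*\bigr)^*.
\]

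Next I would check that, under this identification, the functorial map
\[
\CHom_\bC(\bc_1,\bc_2)\;\longrightarrow\;\CHom_\bC(\Se_\bC(\bc_1),\Se_\bC(\bc_2))
\]
induced by $\Se_\bC$ coincides with the canonical dualization map $V\to V^{**}$ for $V=\CHom_\bC(\bc_1,\bc_2)$. This is the step I would expect to be the main obstacle: although it is essentially formal, it requires chasing the naturality of the defining isomorphism $\phi_{\bc,\bc'}:\CHom_\bC(\bc,\Se_\bC(\bc'))\iso\CHom_\bC(\bc',\bc)^*$ in both variables, and in particular tracing what $\phi_{\Se_\bC(\bc_1),\bc_1}(\Id_{\Se_\bC(\bc_1)})$ corresponds to. The cleanest way I see is to note that $\phi$ witnesses a pairing and the induced map on $\CHom$s is, by Yoneda, determined by its value on identities, whence the identification with the evaluation map $V\to V^{**}$.

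Once this is in hand, the equivalence follows immediately: $\Se_\bC|_{\bC^c}$ being fully faithful means that for every pair $\bc_1,\bc_2\in\bC^c$ the map $V\to V^{**}$ with $V=\CHom_\bC(\bc_1,\bc_2)$ is an isomorphism, i.e.\ $V\in\Vect$ is reflexive (equivalently, $V$ has finite-dimensional cohomologies); this is precisely the definition of $\bC$ being reflexive.
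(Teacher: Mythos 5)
Your proposal is correct and follows essentially the same argument as the paper: apply the defining isomorphism of $\Se$ twice to identify $\CHom_\bC(\Se_\bC(\bc_1),\Se_\bC(\bc_2))$ with $\CHom_\bC(\bc_1,\bc_2)^{**}$, then observe that the map induced by $\Se_\bC$ is the canonical double-dualization map. The paper states the last identification without elaboration; you flag and sketch the verification, but the route is the same.
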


\begin{proof}

For $\bc,\bc'\in \bC^c$, we have:
$$\CHom_\bC(\Se_\bC(\bc),\Se_\bC(\bc')) =\CHom_\bC(\bc',\Se_\bC(\bc))^*
=\CHom_\bC(\bc,\bc')^{**},$$
where the map
$$\CHom_\bC(\bc,\bc')\to \CHom_\bC(\Se_\bC(\bc),\Se_\bC(\bc'))$$
is the canonical map
$$\CHom_\bC(\bc,\bc')\to \CHom_\bC(\bc,\bc')^{**}.$$

\end{proof}

\begin{cor} 
Assume that $\bC$ is reflexive. \hfill

\smallskip

\noindent{\em(a)} If $\Se_\bC$ is proper, then it is fully faithful, and 
the right adjoint to $\Se_\bC$ provides a continuous left inverse of $\Se_\bC$.

\smallskip

\noindent{\em(b)} If $\Se_{\bC^\vee}$ is proper, then $\Se_\bC$ is fully faithful, and admits a left adjoint, which is also a 
right inverse of $\Se_\bC$.

\end{cor}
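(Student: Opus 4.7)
For part (a), I would use the preceding proposition: reflexivity of $\bC$ gives that $\Se_\bC|_{\bC^c}$ is fully faithful. The hypothesis that $\Se_\bC$ is proper means $\Se_\bC$ preserves compact objects and hence admits a continuous right adjoint $R$. A standard argument---writing arbitrary $X, Y \in \bC$ as colimits of compacts and using continuity of $\Se_\bC$, the fact that $\CHom$ takes colimits in the source to limits, and commutes with colimits in the target when the source is compact---then upgrades full faithfulness on $\bC^c$ to full faithfulness on all of $\bC$. Full faithfulness of $\Se_\bC$ is equivalent to the unit $\on{Id} \to R \circ \Se_\bC$ of the adjunction $\Se_\bC \dashv R$ being an isomorphism, which exhibits $R$ as the claimed continuous left inverse.

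For part (b), the plan is to reduce to (a) applied to $\bC^\vee$. Reflexivity is symmetric under duality, so $\bC^\vee$ is reflexive; and $\Se_{\bC^\vee}$ is proper by hypothesis. Applying (a) yields that $\Se_{\bC^\vee}$ is fully faithful with continuous right adjoint $R'$ satisfying $R' \circ \Se_{\bC^\vee} \simeq \on{Id}$. Using the earlier lemma $\Se_{\bC^\vee} \simeq (\Se_\bC)^\vee$, the adjunction $\Se_{\bC^\vee} \dashv R'$ dualizes to a continuous adjunction $L \dashv \Se_\bC$ with $L := (R')^\vee$; and the isomorphism $R' \circ \Se_{\bC^\vee} \simeq \on{Id}$ dualizes (the dual reverses composition order) to $\Se_\bC \circ L \simeq \on{Id}$, which is precisely the assertion that $L$ is a right inverse of $\Se_\bC$.

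It remains to verify $\Se_\bC$ is fully faithful. From the preceding proposition and reflexivity, $\Se_\bC|_{\bC^c}$ is already fully faithful. Via the adjunction $L \dashv \Se_\bC$ and the computation in the proof of the proposition, for compact $\bc, \bc'$ one has the chain of isomorphisms
$$\CHom(L\Se_\bC\bc, \bc') \simeq \CHom(\Se_\bC\bc, \Se_\bC\bc') \simeq \CHom(\bc, \bc')^{**} \simeq \CHom(\bc, \bc'),$$
the last step by reflexivity, with the composite induced by the co-unit $\epsilon_\bc : L\Se_\bC\bc \to \bc$. The main obstacle I expect is bootstrapping this iso on compacts to the global statement $L\Se_\bC \simeq \on{Id}$ (equivalent to $\Se_\bC$ being fully faithful): $L\Se_\bC\bc$ need not be compact (since $\Se_\bC$ is not assumed to preserve compacts in (b)), so the upgrade from ``iso on Hom into compacts'' to ``iso in $\bC$'' must combine the idempotency $L\Se_\bC \circ L\Se_\bC \simeq L\Se_\bC$ (a consequence of $\Se_\bC L \simeq \on{Id}$) with compact generation and the continuity of $L$ and $\Se_\bC$.
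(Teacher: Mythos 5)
Part (a) is correct, and your duality reduction in (b) correctly produces a continuous left adjoint $L=(R')^\vee$ of $\Se_\bC$ with $\Se_\bC\circ L\simeq\on{Id}_\bC$; this is what the corollary is after and is what the paper uses downstream in \corref{c:serre}. The subtlety you flag at the end of (b) is, however, a genuine gap, and your proposed repair does not close it.

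The isomorphism $\CHom(L\Se_\bC\bc,\bc')\simeq\CHom(\bc,\bc')$ induced by $\epsilon_\bc$ (for $\bc,\bc'\in\bC^c$) tests the counit by mapping \emph{into} compacts. In a compactly generated category that is the wrong direction: isomorphisms are detected by $\CHom(\bc'',-)$ for compact $\bc''$, and $L\Se_\bC(\bc)$ is not known to be compact, since in case (b) neither $\Se_\bC$ nor the composite $L\Se_\bC$ is assumed to preserve compacts. The idempotency $L\Se_\bC\circ L\Se_\bC\simeq L\Se_\bC$ just identifies $L\Se_\bC$ as the colocalization onto the essential image of $L$, and says nothing about whether a given compact lies in that image. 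Likewise, dualizing the full faithfulness of $\Se_{\bC^\vee}$ (i.e.\ the unit of $\Se_{\bC^\vee}\dashv R'$ being an isomorphism) yields, as you in effect observed, that the \emph{unit} $\on{Id}\to\Se_\bC L$ of $L\dashv\Se_\bC$ is an isomorphism — that is, that $L$, not $\Se_\bC$, is fully faithful; duality does not convert ``unit iso'' into ``counit iso.''

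The claim about $\Se_\bC$ that is actually available in (b) — and all that is needed for \corref{c:serre}, whose proof only requires the one-sided inverses from (a) and (b) — is full faithfulness on $\bC^c$, which is just the preceding proposition applied to the reflexive $\bC$ and does not use properness of $\Se_{\bC^\vee}$ at all. In (a) this globalizes to all of $\bC$ because $\Se_\bC$ preserves compacts (equivalently, the unit $\on{Id}\to R\Se_\bC$ can be checked on compacts on both sides); in (b) there is no analogous mechanism. So either read ``$\Se_\bC$ is fully faithful'' in (b) as the restriction to $\bC^c$, or note explicitly that extending it to all of $\bC$ would require a further hypothesis (such as properness of $\Se_\bC$ itself, which is case (a)).
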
 

\begin{cor}\label{c:serre}
Assume that $\bC$ is reflexive and that $\Se_\bC$ and $\Se_{\bC^\vee}$ are both proper. Then $\bC$ is Serre. 
\end{cor}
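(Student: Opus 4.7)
The plan is to invoke both parts of the immediately preceding corollary and thereby produce a continuous left inverse and a continuous right inverse of $\Se_\bC$; any endofunctor which admits both a left and a right one-sided inverse is automatically an equivalence, so this will give the Serre property.

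Concretely, reflexivity of $\bC$ is a standing hypothesis, so both parts of the preceding corollary apply. From part (a), applied to $\bC$, I extract the continuous right adjoint $R$ of $\Se_\bC$ together with an isomorphism $R \circ \Se_\bC \simeq \Id_\bC$, so that $R$ is a continuous left inverse of $\Se_\bC$. From part (b), applied to $\bC$ (using the hypothesis that $\Se_{\bC^\vee}$ is proper, via the identification $\Se_{\bC^\vee}\simeq(\Se_\bC)^\vee$ of the earlier lemma), I extract the continuous left adjoint $L$ of $\Se_\bC$ together with an isomorphism $\Se_\bC \circ L \simeq \Id_\bC$, so that $L$ is a right inverse.

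Combining these, the standard manipulation
\[
L \;\simeq\; (R \circ \Se_\bC) \circ L \;\simeq\; R \circ (\Se_\bC \circ L) \;\simeq\; R
\]
identifies the two one-sided inverses and promotes them to a two-sided inverse of $\Se_\bC$. Equivalently, $\Se_\bC$ is fully faithful (either by (a) or by (b)) and essentially surjective (from $\Se_\bC \circ L \simeq \Id_\bC$), hence an equivalence. This is precisely the statement that $\bC$ is Serre.

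The argument is almost entirely formal, so the main conceptual work has already been absorbed in the preceding corollary. The only point inside the present deduction that warrants care is that the left adjoint $L$ and the right adjoint $R$, produced by the two genuinely different mechanisms (properness of $\Se_{\bC^\vee}$ versus properness of $\Se_\bC$), are canonically isomorphic as functors $\bC\to\bC$; but this is exactly what the one-line composition above establishes, so no obstacle arises.
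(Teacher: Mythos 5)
Your proof is correct and is precisely the argument the paper leaves implicit: apply parts (a) and (b) of the preceding corollary to obtain a continuous left inverse and a right inverse of $\Se_\bC$, then observe that these coincide, making $\Se_\bC$ an equivalence. Nothing to add.
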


 



\ssec{Some examples}

\sssec{}

Let $Y$ be an eventually coconnective derived scheme, and $\bC=\QCoh(Y)$. 
It is easy to see that $\QCoh(Y)$ is reflexive if and only if $Y$ is proper, which we will from now on assume. 

\medskip

Then the Serre functor on $Y$ is given by $\CF\mapsto \CF\otimes \omega_Y$, where $\omega_Y$ is the dualizing
object on $Y$. 

\medskip

From here it is clear that $\QCoh(\CY)$ is Serre if and only $Y$ is Gorenstein, which by definition means that $\omega_Y$
is a shifted line bundle. 

\sssec{}

Let $Y$ be a smooth scheme and let $y\in Y$ be a point. Consider the category $\bC=\QCoh(Y)_{\{y\}}$, which is the
full subcategory of $\QCoh(Y)$ that consists of objects set-theoretically supported at $y$. 

\medskip

It is easy to see that $\QCoh(Y)_{\{y\}}$ is proper, and the Serre functor on it is given by
$$\CF\mapsto \CF\otimes \omega_Y.$$

\medskip

The skyscraper $k_y\in \QCoh(Y)_{\{y\}}$ defines a functor $(i_y)_*:\Vect\to \QCoh(Y)_{\{y\}}$. Note that we have
$$\Se_{\QCoh(Y)_{\{y\}}}\circ (i_y)_*\simeq (i_y)_*\circ \Se_{\Vect}\circ (- \otimes \omega_{Y,y}),$$
where $\omega_{Y,y}$ is the fiber of $\omega_Y$ at $y$, which equals $\Lambda^{\dim(Y)}(T^*_y(Y))[\dim(Y)]$.

\sssec{}

Let $N$ be a unipotent algebraic group, and consider the category $\bC=\Rep(N)$. This category is proper, and we
claim the Serre functor on it is given by
$$\CM\mapsto \CM\otimes \ell_N,$$
where $\ell_N$ is the graded line $\Lambda^{\dim(\fn)}(\fn)[\dim(\fn)]$ (in particular, $\Rep(N)$ is Serre).

\medskip

Indeed, this follows from the fact that for $\CM_1,\CM_2\in \Rep(N)^c$, we have
$$\CHom_{\Rep(N)}(\CM_1,\CM_2)\simeq \on{C}^\bullet(\fn,\CM_1^\vee\otimes \CM_2),$$
and hence
\begin{multline*}
\CHom_{\Rep(N)}(\CM_1,\Se_{\Rep(N)}(\CM_2))=
\CHom_{\Rep(N)}(\CM_2,\CM_1)^*\simeq \on{C}^\bullet(\fn,\CM_2^\vee\otimes \CM_1)^*\simeq \\
\simeq \on{C}_\bullet(\fn,\CM_1^\vee\otimes \CM_2)\simeq \on{C}^\bullet(\fn,\CM_1^\vee\otimes \CM_2)\otimes \ell_N
\simeq \CHom_{\Rep(N)}(\CM_1,\CM_2)\otimes \ell_N,
\end{multline*}
as required.

\sssec{}

Let $K$ be a reductive group. Then the category $\Rep(K)$ is proper, and it is easy to see that
$\Se_{\Rep(K)}$ is canonically isomorphic to the identity functor.

\ssec{The pseudo-identity functor}

\sssec{}

We define the functor
$$\on{Ps}:\Funct_{\on{cont}}(\bC,\bD)\to \Funct_{\on{cont}}(\bD,\bC)^{\on{op}}$$
to be the functor $\BD_{\bC^\vee\otimes \bD}$, where we identify
$$\Funct_{\on{cont}}(\bC,\bD)\simeq \bC^\vee\otimes \bD \text{ and }
\Funct_{\on{cont}}(\bD,\bC)\simeq \bD^\vee\otimes \bC\simeq (\bC^\vee\otimes \bD)^\vee.$$

\medskip

By construction, $\on{Ps}$ preserves colimits. I.e., when viewed as a contravariant functor
$$\Funct_{\on{cont}}(\bC,\bD)\to \Funct_{\on{cont}}(\bD,\bC)$$
it takes colimits to limits.

\sssec{}

We define the endofunctor $\PsId_\bC$ of $\bC$ by 
$$\PsId_\bC:=\Ps(\Id_\bC).$$

We shall say that $\bC$ is \emph{Gorenstein} if the functor $\PsId_\bC$ is an equivalence
(see \cite[Sect. 5.4]{Ga1}, where the origin of the terminology is explained). 

\sssec{}

The following is tautological from the definitions:

\begin{lem} $\PsId_{\bC^\vee}\simeq (\PsId_\bC)^\vee$ as endo-functors of $\bC^\vee$.
\end{lem}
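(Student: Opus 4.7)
The plan is to trace through the definitions and observe that the operation $\on{Ps}$ is compatible with the canonical braiding of the symmetric monoidal structure on $\DGCat$. Both $\PsId_{\bC^\vee}$ and $(\PsId_\bC)^\vee$ are continuous endofunctors of $\bC^\vee$, hence correspond to objects of $\Funct_{\on{cont}}(\bC^\vee,\bC^\vee) \simeq (\bC^\vee)^\vee \otimes \bC^\vee \simeq \bC \otimes \bC^\vee$. The goal is to identify them there.

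First I would unwind the definition of $\Ps$. By construction,
\[
\PsId_{\bC^\vee} \;=\; \BD_{\bC \otimes \bC^\vee}(\Id_{\bC^\vee}),
\]
where $\Id_{\bC^\vee}$ is viewed as the unit of the duality between $\bC^\vee$ and $(\bC^\vee)^\vee = \bC$. On the other hand, for a continuous functor $F: \bC \to \bC$ with corresponding kernel $\omega_F \in \bC^\vee \otimes \bC$, the dual $F^\vee: \bC^\vee \to \bC^\vee$ is classified by the image of $\omega_F$ under the braiding $\sigma: \bC^\vee \otimes \bC \iso \bC \otimes \bC^\vee$. Applied to $F = \PsId_\bC$, this says $(\PsId_\bC)^\vee$ corresponds to $\sigma\bigl(\BD_{\bC^\vee \otimes \bC}(\Id_\bC)\bigr)$.

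Next I would invoke the naturality of the dualization functor $\BD$ with respect to the braiding. For compactly generated $\A$ and $\B$, the square
\[
\begin{CD}
\A \otimes \B @>{\BD_{\A \otimes \B}}>> \bigl((\A \otimes \B)^\vee\bigr)^{\on{op}} \\
@V{\sigma}VV  @VV{\sigma^{\on{op}}}V \\
\B \otimes \A @>{\BD_{\B \otimes \A}}>> \bigl((\B \otimes \A)^\vee\bigr)^{\on{op}}
\end{CD}
\]
commutes. This is immediate from the characterization of $\BD$ as the continuous ind-extension of the tautological equivalence $(-)^c \simeq ((-)^\vee)^{c,\on{op}}$ and the fact that the braiding is a symmetric monoidal equivalence (so it preserves compactness and the duality data). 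Applying this with $\A = \bC^\vee$, $\B = \bC$ and using that $\sigma(\Id_\bC) = \Id_{\bC^\vee}$ — which is just the statement that the units of duality for $\bC \leftrightarrow \bC^\vee$ and $\bC^\vee \leftrightarrow (\bC^\vee)^\vee = \bC$ agree under the symmetric duality data — I obtain
\[
\sigma\bigl(\BD_{\bC^\vee \otimes \bC}(\Id_\bC)\bigr) \;=\; \BD_{\bC \otimes \bC^\vee}\bigl(\sigma(\Id_\bC)\bigr) \;=\; \BD_{\bC \otimes \bC^\vee}(\Id_{\bC^\vee}) \;=\; \PsId_{\bC^\vee}.
\]
Comparing with the previous step yields the desired isomorphism.

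There is no real obstacle: the statement is purely formal once the symmetric-monoidal naturality of $\BD$ is in hand. The only point requiring mild care is the identification $\sigma(\Id_\bC) = \Id_{\bC^\vee}$, which is a compatibility baked into the construction of the duality data $\bC \leftrightarrow \bC^\vee$; once accepted, the rest is an unwinding of definitions, as the authors suggest by flagging the lemma as tautological.
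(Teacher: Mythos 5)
Your proposal is correct and is exactly the diagram-chase the paper has in mind: the paper declines to give a proof, asserting only that the lemma is ``tautological from the definitions,'' and your argument is the careful unwinding of those definitions. You correctly locate both kernels in $\bC\otimes\bC^\vee$, reduce the claim to the $\sigma$-equivariance of $\BD$ together with the identity $\sigma(\Id_\bC)=\Id_{\bC^\vee}$, and both of these are indeed part of the symmetric monoidal duality data for compactly generated DG categories, so there is no gap.
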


\ssec{Relationship between the Serre and pseudo-identity functors}

\sssec{}

The following observation will play a key role in this paper: 

\begin{prop} \hfill    \label{p:main}

\smallskip

\noindent{\em(a)}
There is natural transformation 
$$\{F\mapsto \Se_\bC\circ \Ps(F)\circ \Se_\bD\}\, \Rightarrow \{F\mapsto \Se(F)\},$$
when both sides are viewed as \emph{contravariant} functors 
$\Funct_{\on{cont}}(\bC,\bD)\to \Funct_{\on{cont}}(\bD,\bC)$.

\smallskip

\noindent{\em(b)} Assume that $\Se_{\bC^{\vee}}$ and $\Se_\bD$ are proper, and suppose that
for $\bc,\bc' \in \bC^c$ and $\bd,\bd'\in \bD^c$, the map 
$$\CHom_\bC(\bc',\bc)^*\otimes \CHom_\bD(\bd,\bd')^*\to \left(\CHom_\bD(\bd,\bd')\otimes \CHom_\bC(\bc',\bc)\right)^*$$
is an isomorphism. Then the above natural transformation is an isomorphism.

\end{prop}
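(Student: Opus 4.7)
The plan is to reduce the construction to an explicit computation on compact ``rank-one'' generators of $\Funct_{\on{cont}}(\bC,\bD)\simeq \bC^\vee\otimes \bD$, and then extend by continuity. The key observation is that both sides of the proposed natural transformation, viewed as contravariant functors of $F$, send colimits in $\Funct_{\on{cont}}(\bC,\bD)$ to limits in $\Funct_{\on{cont}}(\bD,\bC)$: for $\Ps$ this is built into the definition (it is a dualization), composition with the fixed continuous functors $\Se_\bC$ and $\Se_\bD$ preserves limits, and the preceding lemma gives the analogous statement for $\Se$. Since $\bC^\vee\otimes \bD$ is compactly generated by the rank-one functors $F_{\bc,\bd}:=\BD_\bC(\bc)\boxtimes \bd$ (for $\bc\in\bC^c$, $\bd\in\bD^c$), which act as $F_{\bc,\bd}(\bc')=\CHom_\bC(\bc,\bc')\otimes \bd$, it suffices to construct the natural transformation on these.

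For such $F_{\bc,\bd}$ we compute both sides explicitly. Since the dualization on $\bC^\vee\otimes\bD$ swaps factors, $\Ps(F_{\bc,\bd})$ is the functor $\bd''\mapsto \CHom_\bD(\bd,\bd'')\otimes\bc$. The defining property of $\Se_\bD$ gives $\CHom_\bD(\bd,\Se_\bD(\bd'))\simeq \CHom_\bD(\bd',\bd)^*$ for $\bd'\in\bD^c$, and continuity of $\Se_\bC$ (which commutes with tensoring by a vector space) together with the defining property of $\Se_\bC$ then yields
$$\CHom_\bC(\bc',\Se_\bC\circ \Ps(F_{\bc,\bd})\circ \Se_\bD(\bd'))\simeq \CHom_\bD(\bd',\bd)^*\otimes \CHom_\bC(\bc,\bc')^*.$$
On the other hand, the defining property of $\Se$ combined with the fact that $\CHom_\bD(\bd',-)$ commutes with tensoring by a vector space (since $\bd'$ is compact) gives
$$\CHom_\bC(\bc',\Se(F_{\bc,\bd})(\bd'))\simeq (\CHom_\bC(\bc,\bc')\otimes \CHom_\bD(\bd',\bd))^*.$$
The canonical evaluation map $V^*\otimes W^*\to (V\otimes W)^*$ in $\Vect$ provides a natural transformation between these two expressions; by Yoneda in $\bC$ it lifts to a morphism in $\bC$ functorial in $\bc'$, in $\bd'\in\bD^c$ (hence in $\bd'\in\bD$ by continuous extension), and in $F_{\bc,\bd}$. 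Passing to finite sums (additivity) and then to filtered colimits (using the colimit-to-limit property) produces the natural transformation of part (a).

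For part (b), the stated hypothesis is precisely that this evaluation map is an isomorphism when $V,W$ are $\CHom$-spaces between compact objects; hence the natural transformation is an isomorphism on every compact rank-one $F_{\bc,\bd}$ (evaluated at compact $\bd'$, tested against compact $\bc'$). The properness assumptions on $\Se_\bD$ and $\Se_{\bC^\vee}$ are then used to promote this pointwise iso on compact generators to an iso of continuous functors $\bD\to\bC$ for all $F$: $\Se_\bD$ proper ensures $\Se_\bD(\bd')\in \bD^c$ for $\bd'\in \bD^c$, so the rank-one formula for $\Ps(F)\circ \Se_\bD$ behaves well under the subsequent extensions; and $\Se_{\bC^\vee}$ proper, which is equivalent to $\Se_\bC$ admitting a continuous left adjoint, ensures that $\Se_\bC$ preserves the relevant limits needed to pass from compact to arbitrary $F$. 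The main obstacle will be this last propagation step: carefully tracking the $\infty$-categorical interactions between the Serre functors and the (co)limit presentation of $F$ so that the iso on compact generators survives the passage to arbitrary $F$ and arbitrary $\bd'\in\bD$, which is precisely where the properness hypotheses enter.
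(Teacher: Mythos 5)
Your approach is essentially the same as the paper's: reduce to rank-one objects $F_{\bc',\bd'}(\bc)=\CHom_\bC(\bc',\bc)\otimes\bd'$ (using that $\Se$ sends colimits to limits), compute both sides there, identify the natural transformation as the canonical map $V^*\otimes W^*\to(V\otimes W)^*$, and then use properness of $\Se_{\bC^\vee}$ and $\Se_\bD$ to upgrade the pointwise isomorphism to one for arbitrary $F$. The rank-one computation is correct and matches the paper's.

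Two things are off, though. First, your opening ``key observation'' that \emph{both} sides send colimits in $\Funct_{\on{cont}}(\bC,\bD)$ to limits, justified by ``composition with the fixed continuous functors $\Se_\bC$ and $\Se_\bD$ preserves limits,'' is not true in general: a continuous functor preserves colimits, not limits, and post/pre-composition with it in $\Funct_{\on{cont}}(-,-)$ accordingly only preserves colimits unless extra hypotheses hold. This is exactly what is \emph{not} free in part (a), where you only need the right-hand side $F\mapsto\Se(F)$ to send colimits to limits in order to extend from generators. Second, and more substantively, you explicitly flag as the ``main obstacle'' the step of propagating the isomorphism from compact generators to arbitrary $F$ — but this is precisely the content of the proof of part (b), not something to be deferred. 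The missing lemma, which the paper proves, is: for compactly generated $\bC,\bD,\bE$, pre-composition by a proper $F\colon\bE\to\bC$ (one with a continuous right adjoint) sends limits in $\Funct_{\on{cont}}(\bC,\bD)$ to limits in $\Funct_{\on{cont}}(\bE,\bD)$, and post-composition by $F\colon\bD\to\bE$ admitting a left adjoint (equivalently $F^\vee$ proper) likewise preserves limits. Applied with $F=\Se_\bD$ for pre-composition (using $\Se_\bD$ proper) and $F=\Se_\bC$ for post-composition (using $\Se_{\bC^\vee}\simeq(\Se_\bC)^\vee$ proper, hence $\Se_\bC$ has a left adjoint), this shows $F\mapsto\Se_\bC\circ\Ps(F)\circ\Se_\bD$ also sends colimits to limits, so it suffices to have an isomorphism on rank-one generators — which your hypothesis on $V^*\otimes W^*\to(V\otimes W)^*$ supplies. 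You had all the right ingredients in place (you even note the equivalence ``$\Se_{\bC^\vee}$ proper $\Leftrightarrow$ $\Se_\bC$ has a continuous left adjoint''); the argument just needs to be closed by stating and using this limit-preservation lemma rather than leaving it as an obstacle.
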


\begin{rem}  \label{r:main}
Note that the second condition in (b) is satisfied when:

\begin{itemize}

\item Either $\bC$ or $\bD$ is proper.

\item Either $\bC$ or $\bD$ is reflexive \emph{and} for $\bc,\bc'\on \bC^c$ and $\bd,\bd'\in \bD^c$, the objects
$$\CHom_\bC(\bc',\bc)\in \Vect \text{ and } \CHom_\bD(\bd,\bd') \in \Vect$$
are either both eventually connective or connective (i.e., either both are in $\Vect^{>-\infty}$ or both are in $\Vect^{<\infty}$).

\end{itemize} 

\end{rem} 

\begin{proof}

Since $\Se$ sends colimits to limits, in order to construct the natural transformation in question, it is enough to do so after precomposition with the functor $$ (\bC^c)^{\on{op}}\times \bD^c \to \Funct_{\on{cont}} (\bC , \bD).$$

\medskip

For $\bc'\in \bC^c$, $\bd'\in \bD^c$, the corresponding functor $F:\bC\to \bD$ is given by
$$F(\bc)=\bd'\otimes \CHom_\bC(\bc',\bc),$$ 
the functor $\Ps(F)$ is given by
$$\Ps(F)(\bd)=\bc'\otimes \CHom_\bD(\bd',\bd).$$

Hence, the functor 
$G_1:=\Se_\bC\circ \Ps(F)\circ \Se_\bD$
is determined by
\begin{multline*}
\CHom_\bC(\bc,G_1(\bd))=\CHom_\bC(\bc,\Se_\bC(\bc'))\otimes \CHom_\bD(\bd',\Se_\bD(\bd))\simeq \\
\simeq \CHom_\bC(\bc',\bc)^*\otimes \CHom_\bD(\bd,\bd')^*, \quad \bc\in \bC^c,\bd\in \bD^c.
\end{multline*}

\medskip

The functor $G_2:=\Se(F)$
is determined by
\begin{multline*}
\CHom_\bC(\bc,G_2(\bd))=\CHom_\bD(\bd,\bd'\otimes \CHom_\bC(\bc',\bc))^*\simeq \\
\simeq \left(\CHom_\bD(\bd,\bd')\otimes \CHom_\bC(\bc',\bc)\right)^*, \quad \bc\in \bC^c,\bd\in \bD^c.
\end{multline*}

\medskip

The required natural transformation is now given by
\begin{equation} \label{e:nat trans 2} 
\CHom_\bC(\bc',\bc)^*\otimes \CHom_\bD(\bd,\bd')^*\to \left(\CHom_\bD(\bd,\bd')\otimes \CHom_\bC(\bc',\bc)\right)^*.
\end{equation}

\medskip

Let us now assume that \eqref{e:nat trans 2} is an isomorphism and that $\Se_{\bC^{\vee}}$ and $\Se_\bD$ are proper. 
Let us show that the natural transformation
$$\Se_\bC\circ \Ps(F)\circ \Se_\bD\to \Se(F)$$
is an isomorphism for any $F\in \Funct_{\on{cont}}(\bC,\bD)$. For that it suffices to show that the functor
$$F\mapsto \Se_\bC\circ \Ps(F)\circ \Se_\bD$$
takes colimits in $\Funct_{\on{cont}}(\bC,\bD)$ to limits in $\Funct_{\on{cont}}(\bD,\bC)$. 

\medskip

It suffices to show that composition with
$\Se_\bC$ and pre-composition with $\Se_\bD$ preserve limits. This follows from the following lemma:

\begin{lem}

Let $\bC,\bD,\bE$ be compactly generated categories.

\smallskip

\noindent{\em(a)} If $F \in \Funct_{\on{cont}} (\bE , \bC)$ admits a continuous right adjoint (equivalently, $F$ is proper), then the functor $\Funct_{\on{cont}}(\bC,\bD) \to \Funct_{\on{cont}}(\bE,\bD)$ given by precomposition with $F$ preserves limits.

\smallskip

\noindent{\em(b)} If $F \in \Funct_{\on{cont}} (\bD , \bE)$ admits a left adjoint (equivalently, $F^{\vee}$ is proper), then the functor $\Funct_{\on{cont}}(\bC,\bD) \to \Funct_{\on{cont}}(\bC,\bE)$ given by postcomposition with $F$ preserves limits.

\end{lem}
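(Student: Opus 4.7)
The plan is to exhibit each of the two functors as a right adjoint, which makes limit-preservation automatic. In both parts the sought-after left adjoint is built simply by pre- or post-composing with the corresponding adjoint of $F$.

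For part (a), let $F^R : \bC \to \bE$ denote the continuous right adjoint of $F$. I would take as the candidate left adjoint of precomposition with $F$ the functor
$$(F^R)^* : \Funct_{\on{cont}}(\bE,\bD) \to \Funct_{\on{cont}}(\bC,\bD), \qquad H \mapsto H \circ F^R,$$
which lands in continuous functors because $F^R$ itself is continuous (this is the proper hypothesis). The unit $\on{Id}_\bE \to F^R \circ F$ and counit $F \circ F^R \to \on{Id}_\bC$ of $F \dashv F^R$ are whiskered with arbitrary $H \in \Funct_{\on{cont}}(\bE,\bD)$ and $G \in \Funct_{\on{cont}}(\bC,\bD)$ respectively, producing natural transformations
$$H \to H \circ F^R \circ F = F^*(F^R)^* H \quad \text{and} \quad (F^R)^* F^* G = G \circ F \circ F^R \to G$$
that serve as the unit and counit of the adjunction $(F^R)^* \dashv F^*$; the triangle identities transcribe verbatim from those of $F \dashv F^R$. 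Hence $F^*$ is a right adjoint, so preserves limits.

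Part (b) is entirely dual. Denote by $F^L : \bE \to \bD$ the left adjoint of $F$, which is automatically continuous as the left adjoint of any functor between cocomplete categories preserves colimits. Then postcomposition with $F^L$ defines
$$(F^L)_* : \Funct_{\on{cont}}(\bC,\bE) \to \Funct_{\on{cont}}(\bC,\bD), \qquad H \mapsto F^L \circ H,$$
and whiskering the unit and counit of $F^L \dashv F$ with $H : \bC \to \bE$ and $G : \bC \to \bD$ yields an adjunction $(F^L)_* \dashv F_*$. Consequently $F_*$ preserves limits.

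I do not anticipate any real obstacle: the whole argument is formal, relying only on the standard fact that adjunctions can be whiskered, together with the continuity of the various composites, which follows immediately from the continuity of $F$ and the given adjoint. The only point requiring mild care is the $\infty$-categorical bookkeeping needed to transport the adjunction data coherently through whiskering, but this is routine in the $\DGCat$ framework already in use throughout the paper.
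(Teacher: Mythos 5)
The paper states this lemma without proof (it appears inside the proof of \propref{p:main} and is left as a formal observation). Your argument is correct and is the natural one: exhibit precomposition (resp.\ postcomposition) with $F$ as a right adjoint by whiskering the unit and counit of $F \dashv F^R$ (resp.\ $F^L \dashv F$), checking that the candidate left adjoint lands in $\Funct_{\on{cont}}$ — which is guaranteed by the stated continuity of $F^R$ in (a) and the automatic continuity of $F^L$ (as a left adjoint between cocomplete categories) in (b) — and then invoke that right adjoints preserve limits.
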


\end{proof}

\sssec{}

Summarizing, we obtain the following result that we will use extensively:

\begin{cor}   \label{c:main}
Let $\bC$ be reflexive, and such that for $\bc,\bc' , \bc_1 , \bc'_1 \in \bC^c$ the map 
$$\CHom_\bC(\bc',\bc)^*\otimes \CHom_\bC(\bc_1,\bc'_1)^*\to \left( \CHom_\bC(\bc_1,\bc'_1) \otimes \CHom_\bC(\bc',\bc) \right)^*$$
is an isomorphism. Assume also that $\Se_\bC$ and $\Se_{\bC^\vee}$ are proper. 
Then the functors $\Se_\bC$ and $\PsId_\bC$ are mutually inverse. In particular,
$\bC$ is Serre and Gorenstein. 
\end{cor}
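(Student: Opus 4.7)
The plan is to deduce this corollary directly from the two main results already established, namely \propref{p:main} and \corref{c:serre}. The strategy is essentially: \corref{c:serre} tells us $\Se_\bC$ is an equivalence, and \propref{p:main} applied with $\bD=\bC$ and $F=\Id_\bC$ gives an identity that, after cancelling $\Se_\bC$, realizes $\PsId_\bC$ as the inverse of $\Se_\bC$.

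First, I would verify the hypotheses of \corref{c:serre}: we are given that $\bC$ is reflexive and that $\Se_\bC$ and $\Se_{\bC^\vee}$ are both proper. So \corref{c:serre} applies and yields that $\bC$ is Serre, i.e., $\Se_\bC:\bC\to \bC$ is an auto-equivalence. By the duality $(\Se_\bC)^\vee\simeq \Se_{\bC^\vee}$, the functor $\Se_{\bC^\vee}$ is an auto-equivalence as well.

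Next, I would apply \propref{p:main} with $\bD:=\bC$ and $F:=\Id_\bC$. The hypothesis on properness of $\Se_{\bC^\vee}$ and $\Se_\bD=\Se_\bC$ holds by assumption, and the tensor-star compatibility
$$\CHom_\bC(\bc',\bc)^*\otimes \CHom_\bC(\bc_1,\bc'_1)^*\to \bigl(\CHom_\bC(\bc_1,\bc'_1)\otimes \CHom_\bC(\bc',\bc)\bigr)^*$$
being an isomorphism for $\bc,\bc',\bc_1,\bc'_1\in \bC^c$ is precisely the other standing assumption. Since $\Ps(\Id_\bC)=\PsId_\bC$ and $\Se(\Id_\bC)=\Se_\bC$, part (b) of \propref{p:main} gives a canonical isomorphism
$$\Se_\bC\circ \PsId_\bC\circ \Se_\bC \;\simeq\; \Se_\bC.$$

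Finally, since $\Se_\bC$ is an auto-equivalence of $\bC$ (in particular invertible on both sides), we may cancel it from the above isomorphism to obtain
$$\PsId_\bC\circ \Se_\bC \;\simeq\; \Id_\bC \quad\text{and}\quad \Se_\bC\circ \PsId_\bC \;\simeq\; \Id_\bC,$$
so $\PsId_\bC$ and $\Se_\bC$ are mutually inverse equivalences. In particular $\PsId_\bC$ is an equivalence, meaning $\bC$ is Gorenstein, and we already have that $\bC$ is Serre. No step in this argument is truly an obstacle; the only point requiring attention is that \propref{p:main}(b) is stated for general $F:\bC\to \bD$, and one must check that the hypotheses specialize correctly when $\bD=\bC$ and $F=\Id_\bC$, which they do automatically given the form of the Corollary's assumptions.
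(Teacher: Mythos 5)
Your proof is correct and follows exactly the same route as the paper: apply \propref{p:main} with $F=\Id_\bC$ to get $\Se_\bC\circ \PsId_\bC\circ \Se_\bC\simeq \Se_\bC$, then invoke \corref{c:serre} to see that $\Se_\bC$ is an equivalence and cancel. The paper's written proof is just a terser version of the same two-step argument.
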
 

\begin{proof}
	By \propref{p:main}, we have an isomorphism $\Se_{\bC} \circ \PsId_{\bC} \circ \Se_{\bC} \simeq \Se_{\bC}$. By \corref{c:serre}, The functor $\Se_{\bC}$ is an equivalence.
\end{proof}

\begin{rem} \label{r:main main}
By Remark \ref{r:main}, the first two conditions in the corollary is satisfied if $\bC$ is proper. 

\medskip

More generally, it is satisfied if $\bC$ is 
reflexive and for $\bc,\bc'\in \bC^c$, the object  
$\CHom_\bC(\bc',\bc)\in \Vect$ is eventually coconnective, i.e., lies in $\Vect^{>-\infty}$. 

\medskip

For example, the latter happens if $\bC$
carries a t-structure and all compact objects in $\bC$ are bounded, i.e., lie in $\bC^{\geq n_1,\leq n_2}$ for some $n_1\leq n_2$. 
\end{rem}

\section{Serre and Gorenstein categories in geometric representation theory} \label{s:ex}

\ssec{Examples arising from D-module categories}

\sssec{}

Let $\CY$ be a QCA algebraic stack (see \cite[Definition 1.1.8]{DrGa1} for what this means), 
and let $\lambda$ be a twisting on $\CY$ (see \cite[Sect. 6]{GR1}). Consider the category $$\bC:=\Dmod_\lambda(\CY).$$

\medskip

Recall that the dual category $\Dmod_\lambda(\CY)^\vee$ identifies canonically with $\Dmod_{-\lambda}(\CY)$.
Under this identification, the evaluation map
\begin{equation} \label{e:Dmod pairing}
\Dmod_\lambda(\CY)\otimes \Dmod_{-\lambda}(\CY)\to \Vect
\end{equation}
is given by
$$\Dmod_\lambda(\CY)\otimes \Dmod_{-\lambda}(\CY)\simeq \Dmod_{\lambda,-\lambda}(\CY\times \CY)
\overset{\Delta^!_\CY}\longrightarrow \Dmod(\CY) \overset{(p_\CY)_*}\longrightarrow \Vect,$$
where $p_\CY$ is the projection $\CY\to \on{pt}$, and for a morphism $f$ we denote by $f_*$ the
\emph{renormalized} pushforward of \cite[Sect. 9.3]{DrGa1}.

\medskip

The unit map is given by
$$\Vect\overset{k\mapsto \omega_\CY}\longrightarrow \Dmod(\CY) \overset{(\Delta_\CY)_*}\longrightarrow 
\Dmod_{\lambda,-\lambda}(\CY\times \CY)\simeq \Dmod_\lambda(\CY)\otimes \Dmod_{-\lambda}(\CY).$$

\medskip

We denote by 
$$\BD^{\on{Verdier}}_\CY:\Dmod_\lambda(\CY)^c\to \Dmod_{-\lambda}(\CY)^c$$
the corresponding contravariant dualization functor, and also its ind-extension
$$\BD^{\on{Verdier}}_\CY:\Dmod_\lambda(\CY)\to (\Dmod_{-\lambda}(\CY))^{\on{op}}.$$

\sssec{}

Thus, objects of 
$$\Dmod_{-\lambda}(\CY)\otimes \Dmod_\lambda(\CY)\simeq \Dmod_{-\lambda,\lambda}(\CY\times \CY)$$
define continuous endofunctors of $\Dmod_\lambda(\CY)$. 

\medskip

Following \cite[Sect. 3.1]{Kim}, we introduce the functor
$$\PsId_\CY:\Dmod_\lambda(\CY)\to \Dmod_\lambda(\CY)$$
to be given by the object
$$(\Delta_\CY)_!(k_\CY)\in \Dmod_{-\lambda,\lambda}(\CY\times \CY),$$
where 
$$k_\CY:=\BD^{\on{Verdier}}_\CY(\omega_\CY).$$

\medskip

The following is tautological from the definitions:

\begin{lem}
The functor $\PsId_\CY$ identifies with the functor $\PsId_{\Dmod_\lambda (\CY)}$. 
\end{lem}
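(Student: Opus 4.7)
The plan is to unwind the abstract definition $\PsId_{\Dmod_\lambda(\CY)} := \Ps(\Id_{\Dmod_\lambda(\CY)}) := \BD_{\bC^\vee \otimes \bC}(\Id_{\Dmod_\lambda(\CY)})$ and identify the result with the kernel $(\Delta_\CY)_!(k_\CY)$ defining $\PsId_\CY$. This is essentially a matter of tracking through the canonical self-duality of $\Dmod$ categories under the identification
$$\Funct_{\on{cont}}(\Dmod_\lambda(\CY), \Dmod_\lambda(\CY)) \simeq \Dmod_\lambda(\CY)^\vee \otimes \Dmod_\lambda(\CY) \simeq \Dmod_{-\lambda,\lambda}(\CY \times \CY).$$

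First, I would identify the identity functor as a kernel. The unit map of the duality data, as spelled out in the excerpt, sends $k \mapsto (\Delta_\CY)_*(\omega_\CY)$, and this is exactly the kernel corresponding to $\Id_{\Dmod_\lambda(\CY)}$ under the identification above. Second, I would argue that under the same identification, the abstract contravariant dualization functor $\BD_{\bC^\vee \otimes \bC}$ coincides with the ind-extension of Verdier duality $\BD^{\on{Verdier}}_{\CY \times \CY}$: both are characterized as sending a compact object to the object representing $\CHom(\bc, -)^*$ against the canonical pairing on $\Dmod_{-\lambda,\lambda}(\CY \times \CY)$, which is precisely the Verdier pairing in the D-module formalism used here.

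The remaining step is to compute $\BD^{\on{Verdier}}_{\CY \times \CY}((\Delta_\CY)_*(\omega_\CY))$. The formula
$$\BD^{\on{Verdier}}(f_*\CF) \simeq f_!(\BD^{\on{Verdier}}(\CF))$$
exchanging renormalized $*$- and $!$-pushforward under Verdier duality, together with $\BD^{\on{Verdier}}_\CY(\omega_\CY) = k_\CY$ by definition, then yields $(\Delta_\CY)_!(k_\CY)$, which is the geometric definition of $\PsId_\CY$.

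The main obstacle is justifying the duality-pushforward formula applied to the non-compact object $\omega_\CY$ on a QCA stack: $\BD^{\on{Verdier}}$ is defined by ind-extension from compact objects, so some care is needed to make the formula meaningful. This is handled by the renormalized pushforward framework of \cite{DrGa1} and the explicit compatibility of Verdier duality with the six-functor formalism for QCA stacks; alternatively, one can verify the identity by testing against compact objects on both sides using the defining pairing, where the desired equality reduces to base change along the diagonal. Because all the inputs are set up in the excerpt to make these compatibilities built in, the proof is indeed tautological once the identifications are unwound.
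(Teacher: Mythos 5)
The paper offers no proof of this lemma at all; it simply states ``the following is tautological from the definitions.'' Your proposal is the natural unwinding of that tautology and is correct: the kernel of $\Id_{\Dmod_\lambda(\CY)}$ under the identification $\Funct_{\on{cont}}(\Dmod_\lambda(\CY),\Dmod_\lambda(\CY))\simeq \Dmod_{-\lambda,\lambda}(\CY\times\CY)$ is $(\Delta_\CY)_*(\omega_\CY)$ (the unit of the duality, as the paper spells out), the abstract $\BD_{\bC^\vee\otimes\bC}$ agrees with the ind-extended $\BD^{\on{Verdier}}_{\CY\times\CY}$ because both are the unique continuous extensions of the same identification on compacts, and the exchange $\BD^{\on{Verdier}}\circ (\Delta_\CY)_*\simeq (\Delta_\CY)_!\circ\BD^{\on{Verdier}}$ together with $k_\CY:=\BD^{\on{Verdier}}_\CY(\omega_\CY)$ yields $(\Delta_\CY)_!(k_\CY)$, i.e., the kernel of the geometric $\PsId_\CY$. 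You are right to flag the one genuine subtlety, namely that $\omega_\CY$ is not compact and the exchange formula must be invoked for the ind-extended duality; this is exactly the kind of compatibility the renormalized-pushforward framework of \cite{DrGa1} is designed to supply (and can, as you note, be checked by pairing against compact objects and using base change along $\Delta_\CY$), so the argument goes through as you describe.
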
 

In other words, the functor $\PsId_\bC$ is the abstract version of the geometrically defined functor $\PsId_\CY$. 

\sssec{}

We will prove:

\begin{thm}  \label{t:Y miraculous}
Assume that $\CY$ has a finite number of isomorphism classes of $k$-valued points. Then $\Dmod_\lambda(\CY)$
is proper, Serre and Gorenstein, and the functors $\Se_{\Dmod_\lambda(\CY)}$ and $\PsId_\CY$ are mutually inverse.
\end{thm}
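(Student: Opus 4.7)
The plan is to verify the hypotheses of \corref{c:main} for $\bC := \Dmod_\lambda(\CY)$. Once this is done, \corref{c:main} yields that $\Se_\bC$ and $\PsId_\bC$ are mutually inverse equivalences; identifying $\PsId_\bC$ with the geometrically defined $\PsId_\CY$ via the lemma preceding the theorem then gives the result.

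The first step is a reduction to properness. By \remref{r:main main}, properness of $\bC$ already implies reflexivity together with the tensor-compatibility condition in \corref{c:main}, since compact Homs in a proper category are perfect complexes in $\Vect$. It remains to ensure that $\Se_\bC$ and $\Se_{\bC^\vee}$ are proper. For a proper compactly generated DG category, this is automatic: the formula $\CHom_\bC(\bc',\Se_\bC(\bc))\simeq \CHom_\bC(\bc,\bc')^*$ identifies $\Se_\bC$, restricted to $\bC^c$, with the transport of the dualization functor $\BD_\bC$ across the identification $(\bC^c)^{\on{op}}\simeq (\bC^\vee)^c$, and properness of $\bC$ makes this restriction land in $\bC^c$. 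The same reasoning applies to $\bC^\vee\simeq\Dmod_{-\lambda}(\CY)$, since this dual is again properness-dual to $\bC$. Thus everything reduces to showing that $\Dmod_\lambda(\CY)$ is proper.

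The second step establishes this properness by stratification. Since $\CY$ is QCA with finitely many isomorphism classes of $k$-points, it admits a finite stratification $\CY=\bigsqcup_i \CY_i$ by locally closed substacks, each isomorphic to a classifying stack $BH_i$ for a connected algebraic group $H_i$ (namely the automorphism group of the corresponding point). By the QCA six-functor formalism (\cite{DrGa1}), for any open-closed decomposition $\CY = U \sqcup Z$ with maps $j$ and $i$, the functors $j_!$, $j^!$, $i^*$, $i_*$ all preserve compact objects, and $\Dmod_\lambda(\CY)$ is assembled by recollement from $\Dmod_\lambda(U)$ and $\Dmod_\lambda(Z)$. Hom complexes between compact objects in $\Dmod_\lambda(\CY)$ can therefore be computed as iterated extensions of Hom complexes on strata, and thus reduce to the case $\CY = BH$.

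The third step handles $\Dmod_\lambda(BH)$ for $H$ a connected algebraic group. Writing $H = L \ltimes U$ with $L$ reductive and $U$ unipotent radical, the twisted D-module category on $BH$ is computed from $\Rep(L)$ and $\Rep(U)$ (e.g.\ by a Hochschild-Serre type spectral sequence in the derived setting). The preceding subsection's example calculations show both are proper: for reductive $L$, compact Homs are finite-dimensional and concentrated in degree zero by semisimplicity, while for unipotent $U$, compact Homs are Lie algebra cohomology $\on{C}^\bullet(\fu,-)$ of a finite-dimensional nilpotent Lie algebra, which is bounded with finite-dimensional cohomology. Combining these, compact Homs in $\Dmod_\lambda(BH)$ are perfect complexes, i.e.\ $\Dmod_\lambda(BH)$ is proper.

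The main obstacle is the second step: one must genuinely check that the recollement over the stratification preserves properness, which ultimately rests on the QCA hypothesis guaranteeing that $j_*$ also preserves compact objects along the stratification (so that no ``infinite-dimensional'' Ext-groups can arise at the boundary). A secondary technical point is verifying the abstract claim in the first step, that properness of a compactly generated DG category implies properness of its Serre endofunctor; this should be deduced from the fact that a proper $\bC$ makes the evaluation $\bC\otimes\bC^\vee\to\Vect$ compact-preserving, so that the class representing $\Id_\bC$ in $\bC\otimes\bC^\vee$ is compact, and hence so is the dual class representing $\Se_\bC$.
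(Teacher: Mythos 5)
The fatal issue is in your first step, where you assert that properness of a compactly generated DG category $\bC$ automatically implies that $\Se_\bC$ is proper. This is false. A counterexample is already contained in the paper's own discussion of $\QCoh(Y)$ for a proper derived scheme $Y$: properness of $Y$ makes $\QCoh(Y)$ a proper category (Homs between perfect complexes on a proper scheme are perfect), but $\Se_{\QCoh(Y)} = -\otimes\omega_Y$ preserves compact objects only if $\omega_Y$ is itself a shifted line bundle, i.e.\ only if $Y$ is Gorenstein. Your closing remark --- that properness forces the kernel of $\Id_\bC$ in $\bC^\vee\otimes\bC$ to be compact --- actually states the definition of \emph{smoothness} of $\bC$, not properness; these are dual and genuinely independent conditions, and conflating them is exactly what makes the step seem automatic when it is not. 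More concretely, the defining formula $\CHom_\bC(\bc',\Se_\bC(\bc))\simeq\CHom_\bC(\bc,\bc')^*$ only controls Homs \emph{into} $\Se_\bC(\bc)$ from compacts; it says nothing about $\CHom_\bC(\Se_\bC(\bc),-)$ commuting with colimits, which is what compactness of $\Se_\bC(\bc)$ would require.

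Once this is noticed, your Steps 2 and 3, even if carried through correctly, only establish properness of $\Dmod_\lambda(\CY)$, which is half of what \corref{c:main} demands. The missing half --- properness of $\Se_{\Dmod_\lambda(\CY)}$ and of $\Se_{\Dmod_{-\lambda}(\CY)}$ --- is precisely where the paper needs extra input. The mechanism in the paper is \lemref{l:when compact}: on a stack with finitely many isomorphism classes of points, an object $\CF$ is compact \emph{if and only if} $\CHom(\CF',\CF)$ is a perfect complex for every \emph{coherent} $\CF'$ (not just every compact $\CF'$). The ``if'' direction of this lemma furnishes a usable test for compactness of $\Se(\CF)$, and the ``only if'' direction, combined with the fact (from \cite{DrGa1}) that Verdier duality extends to a contravariant auto-equivalence of the coherent --- not merely compact --- subcategory, supplies the input for that test. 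Both directions go strictly beyond properness of the category, and neither follows formally from a stratification argument for Hom complexes between compacts. So the route has to go through something like \lemref{l:when compact}, and the statement you'd want to prove in your Steps 2 and 3 is that stronger coherent-vs-compact dichotomy, not bare properness.
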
 

\ssec{Proof of \thmref{t:Y miraculous}}

The proof will rely on material from the paper \cite{DrGa1}. 

\sssec{}

We will use the following lemma, proved below:

\begin{lem} \label{l:when compact}
Let $\CY$ have a finite number of isomorphism classes of $k$-valued points. Then an object $\CF\in \Dmod_\lambda(\CY)$
is compact if and only if for every coherent $\CF'\in \Dmod_\lambda(\CY)$, the object
$$\CHom_{\Dmod_\lambda}(\CF',\CF)\in \Vect$$
is compact.
\end{lem}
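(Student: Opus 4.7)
The plan is to reduce to the case of classifying stacks $BH$ by devissage on a finite stratification, and then analyze $\Dmod_\lambda(BH)$ directly in terms of representation theory.

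Since $\CY$ is QCA with only finitely many isomorphism classes of $k$-points, one can choose a finite stratification $\CY=\sqcup_\alpha\CY_\alpha$ with each open stratum of the form $BH_\alpha$ for an algebraic group $H_\alpha$. For each open-closed decomposition $j:\CU\mono\CY\hookleftarrow\CZ:i$, we have the usual recollement adjunctions $(j_!,j^!,j_*)$ and $(i^*,i_*,i^!)$. The first step is to verify that $i_*$ and $j_!$ preserve both coherence and compactness, while $i^!$ and $j^!$ preserve coherence. Granting this, compactness of $\CF\in\Dmod_\lambda(\CY)$ is detected strata-wise via the exact triangle $j_!j^!\CF\to\CF\to i_*i^!\CF$, and the property ``$\CHom(\CF',\CF)$ is compact for every coherent $\CF'$'' is similarly strata-local via the $\CHom$-triangle $\CHom(i_*i^!\CF',\CF)\to\CHom(\CF',\CF)\to\CHom(j_!j^!\CF',\CF)$ together with the $(j_!,j^!)$- and $(i_*,i^!)$-adjunctions. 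By induction on the number of strata, the lemma reduces to the case $\CY=BH$.

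On $BH$, the category $\Dmod_\lambda(BH)$ identifies with a twisted derived category of algebraic representations of $H$. Local finiteness of algebraic $H$-actions implies that a representation is finitely generated if and only if it is finite-dimensional, so coherent objects coincide with compact objects of $\Dmod_\lambda(BH)$: both are bounded complexes of finite-dimensional representations. For the forward direction, if $\CF$ is compact and $\CF'$ is coherent, the Hom is computed by the Chevalley-Eilenberg complex $\on{C}^\bullet(\fh,\CHom_k(\CF',\CF))^{\pi_0(H)}$, which is bounded with finite-dimensional cohomology, hence compact in $\Vect$. For the converse, it suffices to test the hypothesis against a set of coherent objects which compactly generate $\Dmod_\lambda(BH)$ (e.g.\ the irreducibles together with their twists by $\pi_0(H)$-characters): if $\CHom(\CF'_\alpha,\CF)$ is compact for each compact generator $\CF'_\alpha$, then $\CF$ is compact by the standard criterion for compactness in a compactly generated DG category.

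The main obstacle is the devissage step: one must verify that $i_*$ and $j_!$ really do preserve both coherence and compactness, and that the recollement triangles match up the two finiteness conditions across strata. When the stabilizer groups $H_\alpha$ fail to be reductive, the relationship between coherence and compactness in $\Dmod_\lambda(BH_\alpha)$ is controlled by the finiteness machinery of \cite{DrGa1}; invoking their results is precisely what uses the full QCA hypothesis plus the finiteness of isomorphism classes of $k$-points. Once those inputs are in place, the reduction goes through cleanly and the classifying-stack case is essentially a standard Lie-algebra cohomology computation.
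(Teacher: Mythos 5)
Your devissage framework for the ``if'' direction is in the right spirit (the paper also stratifies $\CY$ by substacks of the form $\on{pt}/H_i$ and reduces via $(j_!,j^!)$ and $(i_*,i^!)$ adjunctions), but the analysis of the base case $\CY=BH$ has a genuine gap that undermines the whole argument. You assert that on $BH$ ``coherent objects coincide with compact objects... both are bounded complexes of finite-dimensional representations.'' This is \emph{false} whenever $H$ has a nontrivial unipotent part, and the whole point of working with QCA stacks (rather than, say, stacks with reductive stabilizers) is that this can happen. For non-reductive $H$, compact objects of $\Dmod_\lambda(\on{pt}/H)$ are characterized by \cite[Prop.\ 9.2.3]{DrGa1} as those that are both \emph{coherent} and \emph{safe}; coherence alone (bounded, finite-dimensional cohomologies, i.e.\ holonomicity) is strictly weaker. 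For example $k_{\on{pt}/H}$ is always coherent but is not compact for unipotent $H$.

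Consequently your converse step --- ``test against a set of coherent compact generators; if $\CHom(\CF'_\alpha,\CF)$ is compact then $\CF$ is compact'' --- does not establish compactness of $\CF$. Even setting aside the issue that your proposed generating set (irreducibles and their $\pi_0(H)$-twists) contains non-compact objects in the unipotent case, the inference itself fails: for a non-proper algebra $R$ one has $\CHom_R(R,M)=M$ compact in $\Vect$ for any finite-dimensional $M$, yet such $M$ need not be perfect over $R$. What the hypothesis of the lemma actually buys is (i) coherence of $\CF$, obtained by testing against the coherent object $k_{\on{pt}/H}$ and observing that $\pi_!(k)$ is a finite extension of shifts of it, hence $\CHom(\pi_!(k),\CF)$ is compact; and then, separately, (ii) \emph{safety} of $\CF$, which the paper extracts from \cite[Prop.\ 10.4.7]{DrGa1}: a bounded object of $\Dmod(\on{pt}/H)$ is safe iff $\CHom(k_{\on{pt}/H},\CF)$ is cohomologically bounded. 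Your proof never mentions safety, so there is no mechanism by which it could deduce compactness rather than mere coherence. Finally, for the ``only if'' direction, the paper avoids devissage altogether: finitely many isomorphism classes forces all coherent objects to be holonomic, so $\CHom(\CF',\CF)$ is finite-dimensional in each degree, and \cite[Lemma 9.4.4(a)]{DrGa1} (coherent paired with safe-and-bounded) gives cohomological boundedness. Your Chevalley--Eilenberg computation on $BH$ is fine as far as it goes, but the surrounding reduction for this direction is left entirely implicit and is not how the paper argues.
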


We now proceed with the proof of \thmref{t:Y miraculous}. We will verify that the conditions of \corref{c:main} hold. 

\sssec{}

First, we show that $\Dmod_\lambda(\CY)$ is proper. Since every compact object of $\Dmod_\lambda(\CY)$ is coherent,
this follows immediately from the ``only if" direction in \lemref{l:when compact}.

\sssec{}

Let us now show that the functors $\Se_{\Dmod_\lambda(\CY)}$ and $\Se_{\Dmod_\lambda(\CY)^\vee}\simeq \Se_{\Dmod_{-\lambda}(\CY)}$ 
are proper. By symmetry, it suffices to consider the former case. 

\medskip

By the ``if" direction in \lemref{l:when compact}, it suffices to show that for $\CF\in \Dmod_\lambda(\CY)^c$
and $\CF'\in \Dmod_\lambda(\CY)^{\on{coh}}$, the object
$$\CHom_{\Dmod_\lambda}(\CF',\Se_{\Dmod_\lambda(\CY)}(\CF))\in \Vect$$
is compact.

\medskip

Since $\CHom_{\Dmod_\lambda}(\CF',\Se_{\Dmod_\lambda(\CY)}(\CF))$ is the dual of 
$\CHom_{\Dmod_\lambda}(\CF,\CF')$, so it suffices to show that the latter is compact.

\medskip

Recall now from \cite[Corollary 8.4.2]{DrGa1} that Verdier duality on $\CY$, which we can regard as a contravariant
equivalence
$$\BD^{\on{Verdier}}_\CY:\Dmod_\lambda(\CY)^c\to \Dmod_{-\lambda}(\CY)^c,$$
extends to a contravariant equivalence
$$\BD^{\on{Verdier}}_\CY:\Dmod_\lambda(\CY)^{\on{coh}}\to \Dmod_{-\lambda}(\CY)^{\on{coh}}.$$

Hence, using
$$\CHom_{\Dmod_\lambda}(\CF,\CF')\simeq \CHom_{\Dmod_\lambda}(\BD^{\on{Verdier}}_\CY(\CF'),\BD^{\on{Verdier}}_\CY(\CF)),$$

the required assertion follows from the ``only if" direction in \lemref{l:when compact}.

\qed[\thmref{t:Y miraculous}]

\sssec{Proof of \lemref{l:when compact}, the ``only if" direction}

By \cite[Proposition 9.2.3]{DrGa1}, compact objects in $\Dmod_\lambda(\CY)$ are characterized by the following two
properties: these are objects that are (i) coherent, and (ii) \emph{safe} (see \cite[Definition 9.2.1]{DrGa1} for what
the term ``safe" refers to).

\medskip

Now, the condition that $\CY$ has a finite number of isomorphism classes of $k$-valued points implies that
all coherent objects in $\Dmod_\lambda(\CY)$ are holonomic. This readily implies that for two
such objects $\CF,\CF'$, 
$$\CHom_{\Dmod_\lambda(\CY)}(\CF',\CF)$$
is finite-dimensional in each degree. 

\medskip

Now, by \cite[Lemma 9.4.4(a)]{DrGa1}, if $\CF'$ is coherent and $\CF$ is safe and bounded, 
then $$\CHom_{\Dmod_\lambda(\CY)}(\CF',\CF)\simeq \Gamma_{\on{dR}}(\CY,\BD_\CY^{\on{Verdier}}(\CF')\sotimes \CF)$$
is concentrated in finitely many cohomological degrees. 

\sssec{Proof of \lemref{l:when compact}, the ``if" direction}

Write $\CY$ is a union of locally closed substacks $\CY_i$, where each
$\CY_i$ has a unique isomorphism class of $k$-valued points, i.e., $\CY_i$ is of the form $\on{pt}/H_i$
for an affine algebraic group $H_i$. Denote by $j_i$ the locally closed embedding $\CY_i\hookrightarrow \CY$.

\medskip

Since every object in $\Dmod_\lambda(\CY_i)^c$ is holonomic, the functor $(j_i)_!$, left adjoint to $j_i^!$
is well-defined. Therefore, using the Cousin resolution, we obtain that 
in order to show that a given object $\CF\in \Dmod_\lambda(\CY)$ is compact, it is sufficient
to show that
$$(j_i)^!(\CF)\in \Dmod_\lambda(\CY_i)$$
is compact for every $i$.

\medskip

By adjunction, for any $\CF_i\in \Dmod_\lambda(\CY_i)$, 
we have
$$\CHom_{\Dmod_\lambda(\CY)}((j_i)_!(\CF_i),\CF)\simeq \CHom_{\Dmod_\lambda(\CY_i)}(\CF_i,j_i^!(\CF)).$$

\medskip

Taking $\CF_i$ to be coherent, we thus reduce the assertion of the lemma to the case when $\CY$ is of the form $\on{pt}/H$, which
we will now assume. 

\sssec{Proof in the quotient case}

Consider the category $\Dmod_{\lambda}(\on{pt}/H)$, $\lambda\in \fh^*$. Note that it is zero unless $\lambda$ integrates to a character of $H$,
and in the latter case it is equivalent to the untwisted category $\Dmod(\on{pt}/H)$. 

\medskip

Thus, we can consider the case of the trivial twisting. We claim that in order to test compactness, it 
is sufficient to take $\CF'$ to be just one object, namely, 
$k_{\on{pt}/H}$. 

\medskip

Indeed, let $\pi$ denote the projection $\on{pt}\to \on{pt}/H$. 
The category $\Dmod(\on{pt}/H)$ is compactly generated by the object $\pi_!(k)$, which is a finite successive extension of
shifted copies of $k_{\on{pt}/H}$.

\medskip

Hence, if $\CHom_{\Dmod(\on{pt}/H)}(k_{\on{pt}/H},\CF)$ is compact, then so is $\CHom_{\Dmod(\on{pt}/H)}(\pi_!(k),\CF)$,
and the latter means that $\CF$ is coherent, and in particular bounded. 

\medskip

Now, according to \cite[Proposition 10.4.7]{DrGa1}, 
a bounded object of $\Dmod(\on{pt}/H)$ is safe if and only if $\CHom_{\Dmod(\on{pt}/H)}(k_{\on{pt}/H},\CF)$ is concentrated
in finitely many cohomological degrees.

\medskip

Thus, we obtain that $\CF$ is coherent and safe, and hence compact. 

\qed[\lemref{l:when compact}]

\ssec{A variant: monodromic situation}

\sssec{}

We will now consider a certain variant of \thmref{t:Y miraculous}. Let
$\pi:\wt\CY\to \CY$ be a torsor with respect to a torus $T$, and let $\lambda$ be a character of $\ft^*$.
(Note that such a datum defines a twisting on $\CY$.)

\medskip

We will now consider the full subcategory
$$\Dmod(\wt\CY)^{\lambda\on{-mon}}\subset \Dmod(\wt\CY),$$
consisting of $\lambda$-monodromic objects.

\medskip

Here is one of the possible definitions. Consider first the category
$$\Dmod(\wt\CY)^{T\on{-weak}}$$
of \emph{weakly} $T$-equivariant D-modules. This category admits a homomorphism from
$\Sym(\ft)$ into its center (called ``obstruction to equivariance"). Hence, we can view 
$\Dmod(\wt\CY)^{T\on{-weak}}$ as acted on by the monoidal category 
$\QCoh(\ft^*)$.

\medskip

We set
$$\Dmod(\wt\CY)^{\lambda\on{-mon}}:=\Dmod(\wt\CY)^{T\on{-weak}}\underset{\QCoh(\ft^*)}\otimes \QCoh(\ft^*)_{\{\lambda\}}.$$

\sssec{} 

The forgetful functor 
\begin{equation} \label{e:support lambda}
(\wh{i}_\lambda)_*:\Dmod(\wt\CY)^{\lambda\on{-mon}}\to \Dmod(\wt\CY)^{T\on{-weak}}
\end{equation}
admits a continuous right adjoint $(\wh{i}_\lambda)^!$, obtained using the 
$$\Dmod(\wt\CY)^{T\on{-weak}}\underset{\QCoh(\ft^*)}\otimes-$$
base change from the corresponding adjunction
$$(\wh{i}_\lambda)_*: \QCoh(\ft^*)_{\{\lambda\}}\rightleftarrows  \QCoh(\ft^*):(\wh{i}_\lambda)^!.$$

Since the unit of the adjunction
$$\on{Id}\to (\wh{i}_\lambda)^!\circ (\wh{i}_\lambda)_*$$
is an isomorphism for $\QCoh(\ft^*)_{\{\lambda\}}$, it is also an isomorphism for 
$\Dmod(\wt\CY)^{\lambda\on{-mon}}$. In particular, the functor \eqref{e:support lambda}
is fully faithful.

\begin{lem}
The functor
$$\Dmod(\wt\CY)^{\lambda\on{-mon}} \overset{(\wh{i}_\lambda)_*}
\hookrightarrow \Dmod(\wt\CY)^{T\on{-weak}} \overset{\oblv_{T\on{-weak}}}\longrightarrow
\Dmod(\wt\CY)$$
is fully faithful.
\end{lem}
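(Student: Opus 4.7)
Since the first arrow $(\wh{i}_\lambda)_*$ has just been shown to be fully faithful (the unit of the base-changed adjunction being an isomorphism), the task reduces to showing that the forgetful functor $\oblv_{T\on{-weak}}$ restricts to a fully faithful functor on the essential image of $(\wh{i}_\lambda)_*$, i.e., on the $\lambda$-monodromic objects. Concretely, for $\CF, \CG \in \Dmod(\wt\CY)^{\lambda\on{-mon}}$, the plan is to show that the natural map
$$\CHom_{\Dmod(\wt\CY)^{T\on{-weak}}}((\wh{i}_\lambda)_*\CF,\, (\wh{i}_\lambda)_*\CG) \longrightarrow \CHom_{\Dmod(\wt\CY)}(\oblv(\wh{i}_\lambda)_*\CF,\, \oblv(\wh{i}_\lambda)_*\CG)$$
is an isomorphism.

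The strategy is to use the adjoint pair $(\oblv_{T\on{-weak}}, \Av^w_{T,*})$, where $\Av^w_{T,*}$ is the right adjoint to the forgetful. By the usual reformulation, the desired full faithfulness amounts to the unit map
$$\CF \to (\wh{i}_\lambda)^! \circ \Av^w_{T,*} \circ \oblv_{T\on{-weak}} \circ (\wh{i}_\lambda)_*(\CF)$$
being an isomorphism. The endofunctor $\Av^w_{T,*} \circ \oblv_{T\on{-weak}}$ of $\Dmod(\wt\CY)^{T\on{-weak}}$ is $\QCoh(\ft^*)$-linear, since it commutes with the natural action of $\Dmod(T)^{T\on{-weak}} \simeq \QCoh(\ft^*)$ (by convolution along the $T$-action on $\wt\CY$), and hence it is compatible with the base-change construction of $\Dmod(\wt\CY)^{\lambda\on{-mon}}$.

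I would then trivialize the torsor $\wt\CY \to \CY$ \'etale-locally, so that after base change $\wt\CY \simeq T \times \CY_0$ and the categories factor as $\Dmod(\wt\CY)^{T\on{-weak}} \simeq \QCoh(\ft^*) \otimes \Dmod(\CY_0)$ and $\Dmod(\wt\CY) \simeq \Dmod(T) \otimes \Dmod(\CY_0)$. In this setting, the claim reduces to the single statement that the composite
$$\QCoh(\ft^*)_{\{\lambda\}} \hookrightarrow \QCoh(\ft^*) \longrightarrow \Dmod(T),$$
in which the second arrow sends $M \mapsto M \otimes_{\Sym(\ft)} \D(T)$, is fully faithful. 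On the compact generator $k_\lambda \in \QCoh(\ft^*)_{\{\lambda\}}$, this amounts to the Koszul-dual comparison
$$\Ext^\bullet_{\Sym(\ft)}(k_\lambda, k_\lambda) \simeq \Lambda^\bullet(\ft^*) \simeq H^\bullet_{\dR}(T) \simeq \End_{\Dmod(T)}(\oblv\, k_\lambda),$$
which is verified directly using the Koszul resolution of $k_\lambda$ over $\Sym(\ft)$ and the de Rham computation on the torus.

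The main obstacle is making the \'etale-local reduction rigorous in the DG setting of $D$-modules on stacks, including the verification that $\oblv_{T\on{-weak}}$ and $\Av^w_{T,*}$ satisfy the required base change compatibilities against an \'etale trivialization of the torsor. Given the $\QCoh(\ft^*)$-linear structure and smooth descent for $D$-module categories, this step is essentially formal; the non-trivial content is concentrated in the Koszul-duality identification on compact generators in the trivial-torsor case above.
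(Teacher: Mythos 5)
Your proposal is correct in spirit but takes a genuinely different (and heavier) route than the paper's proof. The paper's argument is entirely global and avoids both the \'etale trivialization and the explicit Koszul computation: it starts from the standard formula
$$\CHom_{\Dmod(\wt\CY)}(\oblv_{T\on{-weak}}(\CF_1),\oblv_{T\on{-weak}}(\CF_2))\simeq
\CHom_{\Dmod(\wt\CY)^{T\on{-weak}}}(\CF_1,R_T\otimes \CF_2),$$
which is exactly the identity $\Av^w_{T,*}\circ\oblv_{T\on{-weak}}\simeq R_T\otimes-$ you would want to compute after your unit-map reduction, with $R_T$ the regular representation of $T$ acting through the $\Rep(T)$-action on $\Dmod(\wt\CY)^{T\on{-weak}}$. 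Decomposing $R_T\simeq\underset{\mu}\oplus\, k^\mu$ and using the adjunction $(\wh{i}_\lambda)_*\dashv (\wh{i}_\lambda)^!$ gives a direct sum over characters $\mu$, and all terms with $\mu\neq 0$ die because $k^\mu\otimes\CF_2$ is $(\lambda+\mu)$-monodromic and $(\wh{i}_\lambda)^!$ kills it. That's the whole proof. Your version recovers the same information: in the local model $\wt\CY\simeq T\times\CY_0$ the endofunctor becomes $M\mapsto M\otimes_{\Sym(\ft)}\CalD(T)\simeq M\otimes_k\CO(T)$, and $\CO(T)=\underset{\mu}\oplus\,k^\mu$ is precisely the regular representation, so the Koszul-duality identification $\Ext^\bullet_{\Sym(\ft)}(k_\lambda,k_\lambda)\simeq H^\bullet_{\dR}(T)$ is the $\mu=0$ piece of the paper's decomposition in disguise. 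What your route buys is nothing over the paper's, and it costs you two nontrivial foundational inputs you explicitly flag as ``essentially formal'' but which are not free: smooth/\'etale descent for $\Dmod(\wt\CY)^{T\on{-weak}}$ and for the monodromic subcategory (including compatibility of the unit natural transformation with base change against an \'etale trivialization of the torsor over the stack $\CY$), and the verification that the Koszul comparison map between the two copies of $\Lambda^\bullet(\ft^*)$ is actually an isomorphism rather than just an abstract equality of graded vector spaces. Neither is incorrect, but the paper sidesteps both by staying global and using the $R_T$-formula directly; you may want to cite that formula explicitly, at which point the local trivialization becomes unnecessary.
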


\begin{proof} 

For $\CF_1,\CF_2\in \Dmod(\wt\CY)^{T\on{-weak}}$, we have
$$\CHom_{\Dmod(\wt\CY)}(\oblv_{T\on{-weak}}(\CF_1),\oblv_{T\on{-weak}}(\CF_2))\simeq
\CHom_{\Dmod(\wt\CY)^{T\on{-weak}}}(\CF_1,R_T\otimes \CF_2),$$
where $R_T$ is the regular representation of $T$. 

\medskip

Suppose now that $\CF_1\in \Dmod(\wt\CY)^{\lambda\on{-mon}}$. 
Then
$$\CHom_{\Dmod(\wt\CY)^{T\on{-weak}}}(\CF_1,R_T\otimes \CF_2)
\simeq 
\CHom_{\Dmod(\wt\CY)^{\lambda\on{-mon}}}(\CF_1,\underset{\mu}\oplus\,
(\wh{i}_\lambda)^!(k^\mu \otimes \CF_2)),$$
where $\mu$ runs through the set of characters of $T$, and $k^\mu$ denotes the corresponding object of $\Rep(T)$. 

\medskip

Now, the assertion of the lemma follows from the fact that if $\CF_2\in \Dmod(\wt\CY)^{\lambda\on{-mon}}$, all the terms
$$(\wh{i}_\lambda)^!(k^\mu \otimes \CF_2)$$
with $\mu\neq 0$ vanish; indeed each $k^\mu \otimes \CF_2$ belongs to $\Dmod(\wt\CY)^{(\lambda+\mu)\on{-mon}}$. 

\end{proof}

\sssec{}  \label{sss:inclusion of exact}

Note that the category $\Dmod_\lambda(\CY)$ is recovered as
$$\Dmod_\lambda(\CY)\simeq \Dmod(\wt\CY)^{T\on{-weak}}\underset{\QCoh(\ft^*)}\otimes \Vect\simeq 
\Dmod(\wt\CY)^{\lambda\on{-mon}} \underset{\QCoh(\ft^*)}\otimes \Vect,$$
where $\QCoh(\ft^*)\to \Vect$ is given by taking the fiber at $\lambda$.

\medskip

We have the tautological forgetful functor
$$(i_\lambda)_*:\Dmod_\lambda(\CY)\to \Dmod(\wt\CY)^{\lambda\on{-mon}},$$
which admits a left adjoint $(i_\lambda)^*$ and a continuous right adjoint $(i_\lambda)^!$.
These functors are obtained by base-changing the corresponding
functors for
$$\Vect \overset{(i_\lambda)_*}\longrightarrow \QCoh(\ft^*)_{\{\lambda\}}.$$

\medskip 

We have
\begin{equation} \label{e:! and *}
(i_\lambda)^*\simeq (-\otimes \ell_\lambda)\circ (i_\lambda)^!,
\end{equation} 
where $\ell_\lambda:=\Lambda^{\dim(\ft)}(\ft)[\dim(\ft)]$. 



\medskip

The functor $(i_\lambda)^!$ is conservative; hence the essential image of $(i_\lambda)_*$ generates 
$\Dmod(\wt\CY)^{\lambda\on{-mon}}$.

\sssec{}

With the above preparations, we claim:

\begin{thm} \label{t:mon miraculous}
Let $\CY$ be as in \thmref{t:Y miraculous}. Then the category
$\Dmod(\wt\CY)^{\lambda\on{-mon}}$ is proper, Serre and Gorenstein. Moreover, we have
\begin{equation} \label{e:Serre mon}
\Se_{\Dmod(\wt\CY)^{\lambda\on{-mon}}}\circ (i_\lambda)_*\simeq
(-\otimes \ell_\lambda)\circ (i_\lambda)_*\circ \Se_{\Dmod_\lambda(\CY)}.
\end{equation} 
\end{thm}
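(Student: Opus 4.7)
The strategy is to verify the hypotheses of \corref{c:main} for the category $\bC:=\Dmod(\wt\CY)^{\lambda\on{-mon}}$, and then deduce the formula \eqref{e:Serre mon} by a Yoneda argument on compact generators coming from $\Dmod_\lambda(\CY)$.

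\textbf{Step 1.} Since $(i_\lambda)_*$ admits the continuous right adjoint $(i_\lambda)^!$, it is proper, i.e., preserves compacts. Combined with the fact (recalled at the end of \secref{sss:inclusion of exact}) that the essential image of $(i_\lambda)_*$ generates $\bC$, we conclude that the objects $(i_\lambda)_*(\G)$ with $\G\in \Dmod_\lambda(\CY)^c$ form a set of compact generators of $\bC$.

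\textbf{Step 2.} We show that $\bC$ is proper. The endofunctor $(i_\lambda)^!\circ (i_\lambda)_*$ of $\Dmod_\lambda(\CY)$ is base-changed from the corresponding endofunctor of $\Vect$, and hence acts by tensoring with the object $V:=(i_\lambda)^!(i_\lambda)_*(k)\in\Vect$. An explicit Koszul computation (using the formula \eqref{e:! and *}) identifies $V$ with $\Lambda^\bullet(\ft^*)$, placed in nonnegative cohomological degrees; in particular $V$ is a bounded perfect complex. Consequently, for $\G,\G'\in \Dmod_\lambda(\CY)^c$,
\[
\CHom_\bC\bigl((i_\lambda)_*(\G),(i_\lambda)_*(\G')\bigr)\simeq \CHom_{\Dmod_\lambda(\CY)}(\G, V\otimes \G')\simeq V\otimes \CHom_{\Dmod_\lambda(\CY)}(\G,\G'),
\]
which is compact in $\Vect$ by \thmref{t:Y miraculous}. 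Thus $\bC$ is proper.

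\textbf{Step 3.} We establish the Serre formula \eqref{e:Serre mon} by Yoneda. It suffices to exhibit, naturally in $\G,\G'\in \Dmod_\lambda(\CY)^c$, an isomorphism between $\CHom_\bC((i_\lambda)_*(\G'),\Se_\bC((i_\lambda)_*(\G)))$ and $\CHom_\bC((i_\lambda)_*(\G'), (i_\lambda)_*(\Se_{\Dmod_\lambda(\CY)}(\G))\otimes \ell_\lambda)$, since the $(i_\lambda)_*(\G')$ compactly generate $\bC$. The defining property of the Serre functor together with the $(i_\lambda)_*\dashv (i_\lambda)^!$ adjunction rewrites the first $\CHom$ as $(V\otimes \CHom_{\Dmod_\lambda(\CY)}(\G,\G'))^*\simeq V^*\otimes \CHom_{\Dmod_\lambda(\CY)}(\G',\Se_{\Dmod_\lambda(\CY)}(\G))$. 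The same adjunction rewrites the second $\CHom$ as $\ell_\lambda\otimes V\otimes \CHom_{\Dmod_\lambda(\CY)}(\G',\Se_{\Dmod_\lambda(\CY)}(\G))$. The two expressions coincide thanks to the key linear-algebraic identity
\[
V^*\simeq \ell_\lambda\otimes V,
\]
which follows from \eqref{e:! and *} (and is immediate from the Koszul description in Step 2 together with the duality $\Lambda^j(\ft)\otimes \det\ft^*\simeq \Lambda^{\dim(\ft)-j}(\ft^*)$).

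\textbf{Step 4.} From the formula of Step 3, $\Se_\bC$ sends the compact generator $(i_\lambda)_*(\G)$ to $(i_\lambda)_*(\Se_{\Dmod_\lambda(\CY)}(\G))\otimes \ell_\lambda$, which is again compact. Therefore $\Se_\bC$ is proper, and by duality $\Se_{\bC^\vee}\simeq (\Se_\bC)^\vee$ is proper as well. In combination with \remref{r:main main} (which deduces reflexivity and the bilinear duality condition from properness of $\bC$), all the hypotheses of \corref{c:main} are in place, and it yields that $\Se_\bC$ and $\PsId_\bC$ are mutually inverse equivalences. In particular, $\bC$ is proper, Serre and Gorenstein.

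The main obstacle is the Koszul computation of $V$ in Step 2 and the verification of the self-duality $V^*\simeq \ell_\lambda\otimes V$ in Step 3; these are the facts that make the twist by $\ell_\lambda$ appear in the Serre formula. Once this linear-algebraic identity is in hand, the rest of the argument is formal manipulation of adjunctions and Yoneda.
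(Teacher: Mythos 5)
Your overall strategy mirrors the paper's: identify the objects $(i_\lambda)_*(\CG)$, $\CG\in\Dmod_\lambda(\CY)^c$, as compact generators, establish properness of $\bC=\Dmod(\wt\CY)^{\lambda\on{-mon}}$ via the $(i_\lambda)_*\dashv(i_\lambda)^!$ adjunction, and extract \eqref{e:Serre mon} from \eqref{e:! and *}. The paper encapsulates the formula in the abstract \lemref{l:double right}(a) (with $F=(i_\lambda)_*$, so that $(F^R)^R=(-\otimes\ell_\lambda)\circ(i_\lambda)_*$), while you unpack the same content into an explicit Yoneda computation involving $V=(i_\lambda)^!(i_\lambda)_*(k)$ and the identity $V^*\simeq\ell_\lambda\otimes V$; these are two presentations of the same mathematical fact, both ultimately resting on \eqref{e:! and *}, and your Koszul identification of $V$ is correct. (Minor quibble: the Koszul computation of $V$ itself does not use \eqref{e:! and *}; that formula is what yields $V^*\simeq\ell_\lambda\otimes V$.)

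There is, however, a genuine gap in Step 4. You assert that $\Se_{\bC^\vee}\simeq(\Se_\bC)^\vee$ is proper ``by duality,'' but $F$ proper does \emph{not} in general imply $F^\vee$ proper: for compactly generated categories, $F^\vee$ preserves compacts if and only if $F$ admits a compact-preserving \emph{left} adjoint, which is an independent condition from $F$ having a continuous right adjoint (this is precisely why \corref{c:serre} requires properness of \emph{both} $\Se_\bC$ and $\Se_{\bC^\vee}$ as separate hypotheses). The paper handles this by noting $\bC^\vee\simeq\Dmod(\wt\CY)^{-\lambda\on{-mon}}$ and appealing to the symmetry of the entire argument under $\lambda\mapsto-\lambda$. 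Alternatively, one can close the gap from what you already have: formula \eqref{e:Serre mon} together with \thmref{t:Y miraculous} shows $\Se_\bC$ sends the compact generators $(i_\lambda)_*(\CG)$ onto a set of compact generators (since $\Se_{\Dmod_\lambda(\CY)}$ is an equivalence), and combined with full faithfulness of $\Se_\bC$ (which follows from reflexivity and properness of $\Se_\bC$), this already forces $\Se_\bC$ to be an equivalence; only then is $(\Se_\bC)^\vee$ automatically an equivalence, hence proper.
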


\begin{proof}

We will verify the conditions of \corref{c:main}. Since the essential image of $(i_\lambda)_*$ generates 
$\Dmod(\wt\CY)^{\lambda\on{-mon}}$, we can consider compact objects of the form
$$(i_\lambda)_*(\CG),\quad \CG\in \Dmod_\lambda(\CY)^c.$$

\medskip

For compact $\CF \in (\Dmod(\wt\CY)^{\lambda\on{-mon}})^c$ and  $\CG\in \Dmod_\lambda(\CY)^c$, we have
$$\CHom_{\Dmod(\wt\CY)^{\lambda\on{-mon}}}(\CF,(i_\lambda)_*(\CG))\simeq 
\CHom_{\Dmod_\lambda(\CY)}( (i_{\lambda})^* \CF , \CG),$$
and the latter is compact since $(i_{\lambda})^*$ is proper and $\Dmod_\lambda(\CY)$ is proper.

Hence, the properness of $\Dmod_\lambda(\CY)$ implies that of $\Dmod(\wt\CY)^{\lambda\on{-mon}}$.

\medskip

Let us show that $\Se_{\Dmod(\wt\CY)^{\lambda\on{-mon}}}$ and 
$\Se_{(\Dmod(\wt\CY)^{\lambda\on{-mon}})^\vee}\simeq \Se_{\Dmod(\wt\CY)^{-\lambda\on{-mon}}}$
are proper. By symmetry, it suffices to consider the former case. 

\medskip

It is enough to show that
for $\CF\in \Dmod_\lambda(\CY)^c$, the object
$$\Se_{\Dmod(\wt\CY)^{\lambda\on{-mon}}}\circ (i_\lambda)_*\in \Dmod(\wt\CY)^{\lambda\on{-mon}}$$
is compact, and for that it is sufficient to verify \eqref{e:Serre mon}. However, the latter follows from \eqref{e:! and *}
and \lemref{l:double right}(a) below. 

\end{proof}

\ssec{An interlude on $\fg$-modules}

In this subsection we supply some background on the self-duality of the category of
$\fg$-modules, where $\fg$ is a Lie algebra. This material will be needed for the proofs
of our main results.

\sssec{}  \label{sss:chi hat}

In this subsection we let $G$ be a reductive group and $\chi$ a character of $Z(\fg)$.  We will consider two categories associated with $\chi$. 
One is
$$\fg\mod_\chi\simeq \fg\mod\underset{Z(\fg)\mod}\otimes \Vect,$$
where $Z(\fg)\mod\to \Vect$ is given by $\underset{Z(\fg)}\otimes k_\chi$, where $k_\chi\in Z(\fg)\mod$
is the sky-scraper at $\chi$.

\medskip

The other is
$$\fg\mod_{\{\chi\}}:=\fg\mod\underset{Z(\fg)\mod}\otimes Z(\fg)\mod_{\{\chi\}},$$
where $Z(\fg)\mod_{\{\chi\}}\subset Z(\fg)\mod$ is the full subcategory of objects with set-theoretic support at $\chi\in \Spec(Z(\fg))$.

\medskip

As in \secref{sss:inclusion of exact}, we have the obvious forgetful functor 
$$(i_\chi)_*:\fg\mod_\chi\to \fg\mod_{\{\chi\}},$$
which admits a left and a continuous right adjoints, denoted $(i_\chi)^*$ and $(i_\chi)^!$, respectively.

\medskip

We have
\begin{equation} \label{e:! and * chi}
(i_\chi)^*\simeq (-\otimes \ell_\chi)\circ (i_\chi)^!,
\end{equation} 
where $\ell_\chi:=\Lambda^{\dim(\ft)}(T^*_\chi(Z(\fg)))[\dim(\ft)]$. 



\medskip

The functor $(i_\chi)^!$ is conservative; hence the essential image of $(i_\chi)_*$ generates $\fg\mod_{\{\chi\}}$.

\sssec{}  \label{sss:fg vs comp}

The categories $\fg\mod_\chi$ and $\fg\mod_{\{\chi\}}$ both carry a t-structure.

\medskip

Since the algebra $U(\fg)$ has a finite cohomological dimension, so does the full subcategory $\fg\mod_{\{\chi\}}$. 
Hence, the t-structure on $\fg\mod_{\{\chi\}}$ gives rise to one $\fg\mod_{\{\chi\}}^c$. 

\medskip

Moreover, we have
$$\fg\mod_{\{\chi\}}^c=\fg\mod_{\{\chi\}}^{\on{f.g.}},$$
where the RHS is the full subcategory of $\fg\mod_{\{\chi\}}$ that consists of objects that have non-vanishing
cohomologies in finitely many degrees and all such cohomologies being finitely generated as $U(\fg)$-modules. 

\medskip

We note, however, that if $\chi$ is \emph{irregular}, then the algebra $U(\fg)_\chi$ has an \emph{inifinite}
cohomological dimension. In particular, the t-structure on $\fg\mod_\chi$ \emph{does not} restrict to a 
t-structure on $\fg\mod_\chi^c$. We still have the inclusion
$$\fg\mod_\chi^c\subset \fg\mod_\chi^{\on{f.g.}},$$
but it is no longer an equality.

\begin{rem}
The latter circumstance can be a source of (unstabstantial, but yet annoying) difficulties. For this reason, we sometimes first prove results for $\fg\mod_{\{\chi\}}$, and then bootstrap them for $\fg\mod_\chi$.
\end{rem}

\sssec{}

The anti-involution $\xi\mapsto -\xi$ of $U(\fg)$ induces an involution on $Z(\fg)$; we denote it by $\chi\mapsto -\chi$;
in particular, the algebra $U(\fg)_{-\chi}$ canonically identifies with $(U(\fg)_\chi)^{\on{op}}$. 

\medskip

We have a canonical identification
\begin{equation} \label{e:duality g abs}
\fg\mod^\vee\simeq \fg\mod,
\end{equation}
where the evaluation functor
\begin{equation} \label{e:eval abs}
\langle - , - \rangle_{\fg} : \fg\mod\otimes \fg\mod\to\Vect
\end{equation}
is given by $\CM_1,\CM_2\mapsto \CM_2\underset{U(\fg)}\otimes \CM_1$. 

\medskip

The identification \eqref{e:duality g abs} induces an identification 
\begin{equation} \label{e:duality g gen}
\fg\mod_{\{\chi\}}^\vee\simeq \fg\mod_{\{-\chi\}},
\end{equation}
where the evaluation functor 
$$\fg\mod_{\{\chi\}}\otimes \fg\mod_{\{-\chi\}}\to \Vect$$
is obtained by precomposing \eqref{e:eval abs} with the tautological embeddings.

\medskip

Let $\BD_\fg^{\on{can}}$ denote the corresponding contravariant equivalences
$$\fg\mod^c\simeq \fg\mod^c \text{ and } \fg\mod_{\{\chi\}}^c\simeq \fg\mod_{\{-\chi\}}^c.$$

\medskip

In addition, we have a canonical identification
\begin{equation} \label{e:duality g chi}
(\fg\mod_\chi)^\vee\simeq \fg\mod_{-\chi},
\end{equation}
where the evaluation functor
$$\langle - , - \rangle_{\fg , \chi} : \fg\mod_{\chi}\otimes \fg\mod_{-\chi}\to\Vect$$
is given by $\CM_1,\CM_2\mapsto \CM_2\underset{U(\fg)_\chi}\otimes \CM_1$. 

\medskip

Let $\BD_{\fg,\chi}^{\on{can}}$ denote the corresponding contravariant equivalence
$$\fg\mod_\chi^c\simeq \fg\mod_{-\chi}^c.$$

\medskip

Note that we have a commutative diagram
$$
\CD
(\fg\mod_\chi)^\vee   @>{\sim}>>  \fg\mod_{-\chi}  \\
@V{((i_\chi)^*)^\vee}VV   @VV{(i_{-\chi})_*}V   \\
\fg\mod_{\{\chi\}}^\vee  @>>>  \fg\mod_{\{-\chi\}}.
\endCD
$$

In other words, we have a canonical isomorphism of contravariant functors 
$$(-\otimes \ell_\chi) \circ \BD_\fg^{\on{can}}\circ (i_\chi)_* \simeq (i_{-\chi})_*\circ \BD_{\fg,\chi}^{\on{can}}, \quad 
\fg\mod_\chi^c\rightrightarrows \fg\mod_{\{-\chi\}}^c.$$

\sssec{}

Let $H\subset G$ be any subgroup. Recall that if $\bC$ is a dualizable category acted on by $H$, then we have a canonical 
identification
\begin{equation} \label{e:dual equiv}
(\bC^\vee)^H\simeq (\bC^H)^{\vee}.
\end{equation}

\medskip

The corresponding pairing 
$$\langle-,-\rangle_{\bC,H}:(\bC^\vee)^H\otimes \bC^H\to \Vect$$
is the composition
$$(\bC^\vee)^H\otimes \bC^H\to (\bC^\vee\otimes \bC)^H\to \Vect^H\simeq \Dmod(\on{pt}/H)\to \Vect,$$
where the second arrow is induced by the pairing 
$$\langle-,-\rangle:\bC^\vee\otimes \bC\to \Vect,$$
and the third arrow is the functor of 
\emph{renormalized} de Rham cohomology (see \cite[Sect. 9.1]{DrGa1}), i.e., 
the \emph{renormalized} direct image functor (see \cite[Sect. 9.3]{DrGa1}) along $\on{pt}/H\to \on{pt}$. 

\medskip

With respect to the identification \eqref{e:dual equiv}, the functor dual to
$$\oblv_H:\bC^H\to \bC$$
is the functor
$$\Av^H_*:\bC^\vee\to (\bC^\vee)^H,$$
and vice versa. 

\sssec{}

Thus, the identifications \eqref{e:duality g abs}, \eqref{e:duality g gen} and \eqref{e:duality g chi}
induce the identifications
\begin{equation} \label{e:duality g H}
(\fg\mod^H)^\vee\simeq \fg\mod^H,
\end{equation}
\begin{equation} \label{e:duality g H gen}
(\fg\mod^H_{\{\chi\}})^\vee\simeq \fg\mod^H_{-\{\chi\}},
\end{equation}
\begin{equation} \label{e:duality g H chi}
(\fg\mod^H_\chi)^\vee\simeq \fg\mod^H_{-\chi}.
\end{equation}

We will denote the corresponding pairings as follows:

$$ \langle -, -\rangle_{\fg,H} : \fg\mod^H \otimes \fg\mod^H \to \Vect,$$
$$ \langle -, -\rangle_{\fg,H} : \fg\mod^H_{\{ \chi\}} \otimes \fg\mod^H_{\{ -\chi\}} \to \Vect,$$
$$ \langle -, -\rangle_{\fg,\chi,H} : \fg\mod^H_{\chi} \otimes \fg\mod^H_{-\chi} \to \Vect.$$

We keep the same notations for the corresponding contravariant equivalences
$$\BD_\fg^{\on{can}}:(\fg\mod^H)^c\simeq (\fg\mod^H)^c \text{ and }
\BD_\fg^{\on{can}}:(\fg\mod_{\{\chi\}}^H)^c\simeq (\fg\mod_{-\{\chi\}}^H)^c,$$
and 
$$\BD_{\fg,\chi}^{\on{can}}:(\fg\mod_\chi^H)^c\simeq (\fg\mod_{-\chi}^H)^c.$$

We have:
$$\oblv_H\circ \BD_\fg^{\on{can}}\simeq \BD_\fg^{\on{can}}\circ \oblv_H \text{ and }
\oblv_H\circ \BD_{\fg,\chi}^{\on{can}}\simeq \BD_{\fg,\chi}^{\on{can}}\circ \oblv_H.$$

\medskip

The functors $(i_\chi)^*,(i_\chi)_*,(i_\chi)^!$ induce functors between the corresponding equivariant
categories, and the latter are compatible with the corresponding functors $\oblv_H$ and $\Av^H_*$.

\ssec{A reminder on Localization Theory}

\sssec{}   \label{sss:loc}

Let $\lambda$ be a character of $\ft$ that corresponds to $\chi$ under the Harish-Chandra map. To $\lambda$ 
we assign a TDO $\on{D}_\lambda$ on the flag variety $X$ of $G$. 

\medskip

\noindent NB: Unlike \cite{Kim}, we do \emph{not} apply the $\rho$-shift when we assign $\on{D}_\lambda$ to $\lambda$.
In particular, $\lambda=0$ corresponds to the untwisted $\on{D}$.

\medskip

Consider the functor
\begin{equation} \label{e:sections}
\Gamma:\Dmod_\lambda(X)\to \fg\mod_\chi.
\end{equation}

By \cite{BB}, the functor $\Gamma$ admits a fully faithful left adjoint, denoted $\on{Loc}$;
both these functors are compatible with the action of $G$.  

\medskip

The functors $\Loc$ and $\Gamma$ define functors between the categories
$$ \fg\mod_\chi^H \text{ and } \Dmod_\lambda(H\backslash X)\simeq \Dmod_\lambda(X)^H$$
with the same adjunction properties; these functors are compatible with the induction and
forgetful functors $\oblv_H$ and $\Av^H_*$. 

\sssec{}   \label{sss:loc and dual}

The identifications
$$\Dmod_\lambda(X)^\vee\simeq \Dmod_{-\lambda}(X) \text{ and } (\fg\mod_\chi)^\vee\simeq \fg\mod_{-\chi}$$
are compatible as follows. First, we note that if $\lambda$ corresponds to $\chi$, then $-\lambda-2\rho$
corresponds to $-\chi$. Now, the functor 
$$\Gamma^\vee: (\fg\mod_\chi)^\vee\to \Dmod_\lambda(X)^\vee,$$
dual to \eqref{e:sections}, identifies canonically with 
$$\fg\mod_{-\chi} \overset{\Loc}\longrightarrow \Dmod_{-\lambda-2\rho}(X) \overset{-\otimes \omega^{-1}_X}\longrightarrow 
\Dmod_{-\lambda}(X),$$
where $\omega_X$ is the dualizing complex on $X$, and we use the fact that $\omega_X\simeq \CO(-2\rho)[\dim(X)]$. 

\medskip

In other words, we have an isomorphism of contravariant functors 
$$\Loc \circ  \BD_{\fg,\chi}^{\on{can}}\simeq 
(-\otimes \CO(-2\rho))[\dim(X)]\circ \BD_X^{\on{Verdier}}\circ \Loc, \quad 
(\fg\mod_\chi)^c\rightrightarrows \Dmod_{-\lambda-2\rho}(X)^c.$$
 
\medskip
 
Similar identifications pass on to the corresponding $H$-equivariant categories. 

\sssec{}

We will now consider the following variant of the adjunction $(\Loc,\Gamma)$. Namely, we consider the
base affine space
$$\wt{X}\to X,$$
and for a given $\lambda$ we consider the corresponding category 
$$\Dmod(\wt{X})^{\lambda\on{-mon}}.$$

Taking global sections on $\wt{X}$, and then taking $T$-invariants, we obtain a functor
$$\Gamma':\Dmod(\wt{X})^{\lambda\on{-mon}}\to \fg\mod_{\{\chi\}}.$$

\medskip

We have a commutative diagram
$$
\CD
\Dmod_{\lambda}(X) @>{(i_\lambda)_*}>>  \Dmod(\wt{X})^{\lambda\on{-mon}} \\
@V{\Gamma}VV   @VV{\Gamma'}V  \\
\fg\mod_\chi @>{(i_\chi)_*}>>  \fg\mod_{\{\chi\}},
\endCD
$$

\medskip

The main advantage of the functor $\Gamma'$ (unlike that of $\Gamma$ of
\eqref{e:sections}) is given by the following lemma:

\begin{lem} \label{l:global good}
The functor $\Gamma'$ sends compact objects to compact ones.
\end{lem}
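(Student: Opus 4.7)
The plan is to reduce compactness of $\Gamma'(\CF)$ to the case $\CF = (i_\lambda)_*(\CG)$ for $\CG \in \Dmod_\lambda(X)^c$. The functor $(i_\lambda)_*$ preserves compactness, since it admits a continuous right adjoint $(i_\lambda)^!$; combined with the fact recorded earlier that the essential image of $(i_\lambda)_*$ generates $\Dmod(\wt X)^{\lambda\on{-mon}}$, this tells me that the objects $(i_\lambda)_*(\CG)$ with $\CG \in \Dmod_\lambda(X)^c$ constitute a set of compact generators of $\Dmod(\wt X)^{\lambda\on{-mon}}$. Since the class of $\CF$ for which $\Gamma'(\CF)$ is compact is closed under finite colimits and retracts, it suffices to verify the conclusion on these generators.

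Next I would invoke the commutative square preceding the lemma to rewrite
$$\Gamma'\bigl((i_\lambda)_*(\CG)\bigr)\simeq (i_\chi)_*\bigl(\Gamma(\CG)\bigr),$$
thereby reducing the claim to compactness of the right-hand side in $\fg\mod_{\{\chi\}}$. Since $X$ is smooth and proper and $\CG$ is coherent (being a compact object of $\Dmod_\lambda(X)$), the complex $\Gamma(\CG)$ is a bounded complex of finitely generated $U(\fg)_\chi$-modules; that is, $\Gamma(\CG) \in \fg\mod_\chi^{\on{f.g.}}$.

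Finally I would pass through $(i_\chi)_*$: at the level of underlying $\fg$-modules, $(i_\chi)_*$ is a t-exact fully faithful embedding whose image consists of the objects set-theoretically supported at $\chi$, so a bounded complex of finitely generated $U(\fg)_\chi$-modules is carried to a bounded complex of finitely generated $U(\fg)$-modules with set-theoretic support at $\chi$. Hence $(i_\chi)_*(\Gamma(\CG)) \in \fg\mod_{\{\chi\}}^{\on{f.g.}}$, and by the identification $\fg\mod_{\{\chi\}}^c = \fg\mod_{\{\chi\}}^{\on{f.g.}}$ recalled in \secref{sss:fg vs comp} (which rests on $U(\fg)$ having finite cohomological dimension) this object is compact, as desired.

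I don't anticipate a serious obstacle here; the proof is essentially a bookkeeping argument once the compact generators are identified. Conceptually, the point of introducing $\Gamma'$ rather than $\Gamma$ is exactly to gain this preservation of compactness: for irregular $\chi$ the algebra $U(\fg)_\chi$ has infinite cohomological dimension, so compactness in $\fg\mod_\chi$ is strictly stronger than finite generation, and the usual $\Gamma$ need not preserve it; enlarging the target to $\fg\mod_{\{\chi\}}$ sidesteps this discrepancy via the equality $\fg\mod_{\{\chi\}}^c = \fg\mod_{\{\chi\}}^{\on{f.g.}}$.
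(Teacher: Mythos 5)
Your argument is correct and fills in precisely the details that the paper's one-line proof suppresses: the reduction to compact generators of the form $(i_\lambda)_*(\CG)$ with $\CG\in\Dmod_\lambda(X)^c$, the use of the commutative square relating $\Gamma'$ to $\Gamma$, the fact that $\Gamma$ lands in $\fg\mod_\chi^{\on{f.g.}}$, and the identification $\fg\mod_{\{\chi\}}^c=\fg\mod_{\{\chi\}}^{\on{f.g.}}$. This is the same route the paper takes, just made explicit.
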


\begin{proof}
Follows from the fact that the functor $\Gamma$ of \eqref{e:sections} sends 
$\Dmod_{\lambda}(X)^c$ to $\fg\mod_\chi^{\on{f.g.}}$.
\end{proof} 

\sssec{}

The functor $\Gamma'$ admits a left adjoint, denoted $\Loc'$, but the latter is \emph{no longer} fully faithful,
see \secref{sss:fix Loc'}. 

\medskip

It follows from \lemref{l:global good} that the functor $\Gamma'$ 
admits also a continuous \emph{right} adjoint, denoted $\coLoc'$. 

\medskip

One easily shows that the functor $\coLoc'$
is also compatible with the $G$-actions. In particular, we have the functors
$\Loc',\Gamma',\coLoc'$ between the corresponding $H$-equivariant
categories, compatible with the functors $\oblv_H$ and $\Av^H_*$. 

\medskip

Since 
$$\coLoc':\fg\mod_{\{\chi\}}^H\to \Dmod(H\backslash \wt{X})^{\lambda\on{-mon}}$$
is continuous, we obtain that the functor 
$$\Gamma':\Dmod(H\backslash \wt{X})^{\lambda\on{-mon}}\to \fg\mod_{\{\chi\}}^H$$
preserves compactness. 

\sssec{} \label{sss:fix Loc'}

As was mentioned above, 
the key difference between the $(\Loc,\Gamma)$ and $(\Loc',\Gamma')$-adjunctions is that the functor $\Loc'$ is
no longer fully faithful, but its failure to be fully faithful is controllable. 

\medskip

Namely, consider the Harish-Chandra map
$Z(\fg)\to \Sym(\ft)$, and consider the algebra
$$U(\fg)^\sim:=U(\fg)\underset{Z(\fg)}\otimes \Sym(\ft).$$

\medskip

Denote $\fg\mod^\sim:=U(\fg)^\sim\mod$, and let
$$\fg\mod^\sim_{\{\lambda\}}\subset \fg\mod^\sim$$
be the full subcategory consisting of objects that are set-theoretically supported at $\lambda$ as $\Sym(\ft)$-modules.

\medskip

Then the functor 
$$\Gamma': \Dmod(\wt{X})^{\lambda\on{-mon}}\to \fg\mod_{\{\chi\}}$$
factors as
$$\Dmod(\wt{X})^{\lambda\on{-mon}} \overset{\Gamma^\sim}\longrightarrow \fg\mod^\sim_{\{\lambda\}}\to 
\fg\mod_{\{\chi\}},$$
where $\fg\mod^\sim_{\{\lambda\}}\to \fg\mod_{\{\chi\}}$ is the forgetful functor. 

\medskip

The above functor $\Gamma^\sim$ also admits a left adjoint, denoted $\Loc^\sim$, and the latter functor \emph{is}
fully faithful.

\medskip

From here we obtain:

\begin{lem} \label{l:Loc sim}
The endofunctor $\Gamma' \circ \Loc'$ of $\fg\mod_{\{\chi\}}$ is given by 
$$\CM\mapsto \CM\underset{Z(\fg)}\otimes \CQ,$$
where $\CQ$ is $\Sym(\ft)$,regarded as an $Z(\fg)$-module
\end{lem}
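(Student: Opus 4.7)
The approach is to exploit the factorization $\Gamma' = F \circ \Gamma^\sim$ established just above, where $F: \fg\mod^\sim_{\{\lambda\}} \to \fg\mod_{\{\chi\}}$ is the forgetful functor, together with the fact that $\Loc^\sim$ is fully faithful. Taking left adjoints in the factorization, $\Loc' \simeq \Loc^\sim \circ F^L$ where $F^L$ denotes the left adjoint of $F$; meanwhile full faithfulness of $\Loc^\sim$ means that $\Gamma^\sim \circ \Loc^\sim \simeq \on{Id}$. Consequently,
\[
\Gamma' \circ \Loc' \simeq F \circ \Gamma^\sim \circ \Loc^\sim \circ F^L \simeq F \circ F^L,
\]
which reduces the statement of the lemma to identifying the endofunctor $F \circ F^L$ of $\fg\mod_{\{\chi\}}$.

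Next, I would identify $F^L$ explicitly. The forgetful $F$ is restriction of scalars along the $Z(\fg)$-algebra inclusion $U(\fg) \hookrightarrow U(\fg)^\sim = U(\fg)\otimes_{Z(\fg)}\Sym(\ft)$, cut down to the respective support subcategories. Its left adjoint is the corresponding extension-of-scalars (induction) functor, so for $\CM \in \fg\mod_{\{\chi\}}$,
\[
F^L(\CM) \simeq U(\fg)^\sim \otimes_{U(\fg)} \CM \simeq \CM \otimes_{Z(\fg)} \Sym(\ft),
\]
with $U(\fg)^\sim$ acting through the left tensor factor. That this formula legitimately lands in $\fg\mod^\sim_{\{\lambda\}}$ uses the flatness of $\Sym(\ft)$ over $Z(\fg)$ (Chevalley: $\Sym(\ft)$ is free of rank $|W|$ over $\Sym(\ft)^W = Z(\fg)$), together with the fact that $\lambda$ maps to $\chi$ under $\Spec\Sym(\ft) \to \Spec Z(\fg)$.

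Finally, composing with $F$, which merely forgets the $\Sym(\ft)$-refinement, produces $\CM\otimes_{Z(\fg)}\Sym(\ft) = \CM \otimes_{Z(\fg)} \CQ$ as a $\fg$-module, as asserted. The main care point in the argument is the bookkeeping of the set-theoretic support conditions while identifying $F^L$: one must verify that passing from the un-supported induction formula to its version in $\fg\mod^\sim_{\{\lambda\}}$ is compatible with composition by $F$ so as to yield the full tensor $\CM\otimes_{Z(\fg)}\Sym(\ft)$ on the nose. This reduces to a purely commutative-algebra check about the finite flat extension $Z(\fg)\to\Sym(\ft)$ and the restriction/induction of modules between the subcategories supported at $\chi$ and $\lambda$.
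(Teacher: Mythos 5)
The structural reduction in your first paragraph is correct and is indeed what the paper's ``From here we obtain'' signals: you write $\Gamma'\circ\Loc' \simeq F\circ\Gamma^\sim\circ\Loc^\sim\circ F^L$ and use the unit isomorphism $\on{Id}\simeq\Gamma^\sim\circ\Loc^\sim$ coming from the full faithfulness of $\Loc^\sim$ to reduce the lemma to identifying $F\circ F^L$, where $F\colon\fg\mod^\sim_{\{\lambda\}}\to\fg\mod_{\{\chi\}}$ is the forgetful functor. (The paper states the lemma without a written proof, so there is nothing to compare against in detail, but this reduction is the obvious and intended one.)

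The gap is in your identification of $F^L$. You assert $F^L(\CM)\simeq U(\fg)^\sim\otimes_{U(\fg)}\CM\simeq\CM\otimes_{Z(\fg)}\Sym(\ft)$ and claim this lands in $\fg\mod^\sim_{\{\lambda\}}$ ``using the flatness of $\Sym(\ft)$ over $Z(\fg)$ together with the fact that $\lambda$ maps to $\chi$.'' But the set-theoretic fiber of $\Spec\Sym(\ft)\to\Spec Z(\fg)$ over $\chi$ is the \emph{entire Weyl orbit} $W\cdot\lambda$, not the single point $\lambda$. Consequently, for $\CM$ supported at $\chi$ over $Z(\fg)$, the module $\CM\otimes_{Z(\fg)}\Sym(\ft)$ is supported over $\Sym(\ft)$ on all of $W\cdot\lambda$ and does \emph{not} lie in $\fg\mod^\sim_{\{\lambda\}}$ as defined in the paper (``set-theoretically supported at $\lambda$ as $\Sym(\ft)$-modules''), unless $\lambda$ is fixed by $W$. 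The actual left adjoint of $F$ as a functor into the support-at-$\lambda$ subcategory is the induction followed by projection onto the $\lambda$-supported summand: $F^L(\CM)\simeq\bigl(\CM\otimes_{Z(\fg)}\Sym(\ft)\bigr)_{\{\lambda\}}$. This is genuinely smaller than the full tensor --- in the regular case the full tensor is $\CM^{\oplus|W|}$ while the $\lambda$-piece is just $\CM$ --- so the ``care point'' you flag at the end of your writeup is not merely a bookkeeping check but a step where the argument as written breaks. You need to either pass to the $\lambda$-supported piece and explain why $\CQ$ should then be understood as the corresponding localization/completion of $\Sym(\ft)$ at $\lambda$ rather than the full ring, or else argue that the paper's $\fg\mod^\sim_{\{\lambda\}}$ should be read as supported on the Weyl orbit of $\lambda$ (equivalently at $\chi$ over $Z(\fg)$), in which case your formula for $F^L$ is correct but one then has to re-examine whether $\Loc^\sim$ is fully faithful in that bigger target. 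As it stands, the two claims (naive induction lands in the $\lambda$-supported subcategory, and $\Loc^\sim$ is fully faithful) cannot both hold under the paper's literal definition, and your proof does not resolve the tension.
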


Note that the above module $\CQ$ is (locally) free. 

\begin{cor} \label{c:Loc sim}
The endofunctor $\Gamma' \circ \coLoc'$ of $\fg\mod_{\{\chi\}}$ is given by 
$$\CM\mapsto \CM\underset{Z(\fg)}\otimes \CQ^\vee,$$
where $\CQ^\vee$ is the dual of $\CQ$.
\end{cor}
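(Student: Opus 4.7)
The proof plan is to observe that, by combining the two adjunctions $\Loc' \dashv \Gamma'$ and $\Gamma' \dashv \coLoc'$, the endofunctor $\Gamma' \circ \coLoc'$ of $\fg\mod_{\{\chi\}}$ is the right adjoint of the endofunctor $\Gamma' \circ \Loc'$. Indeed, for any $\CM, \CN \in \fg\mod_{\{\chi\}}$, one has a natural chain of isomorphisms
$$\CHom(\Gamma' \Loc'\, \CM, \CN) \simeq \CHom(\Loc'\, \CM, \coLoc'\, \CN) \simeq \CHom(\CM, \Gamma' \coLoc'\, \CN),$$
establishing the desired adjunction.

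By \lemref{l:Loc sim}, the endofunctor $\Gamma' \circ \Loc'$ is identified with $\CM \mapsto \CM \underset{Z(\fg)}\otimes \CQ$. Since $\CQ = \Sym(\ft)$ is locally free of finite rank as a $Z(\fg)$-module (noted immediately after \lemref{l:Loc sim}, and ultimately a consequence of Chevalley's theorem for $\Sym(\ft)$ over $\Sym(\ft)^W \simeq Z(\fg)$), the functor $-\underset{Z(\fg)}\otimes \CQ$ admits $-\underset{Z(\fg)}\otimes \CQ^\vee$ as its right adjoint on $Z(\fg)\mod$, where $\CQ^\vee := \on{Hom}_{Z(\fg)}(\CQ, Z(\fg))$. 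Because $Z(\fg)$ lies in the center of $U(\fg)$, both the tensor product and this adjunction lift tautologically from $Z(\fg)\mod$ to the $Z(\fg)$-linear category $\fg\mod_{\{\chi\}}$.

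By uniqueness of right adjoints, I conclude that $\Gamma' \circ \coLoc' \simeq -\underset{Z(\fg)}\otimes \CQ^\vee$, which is the claim. The argument is entirely formal; no serious obstacle is anticipated, since the only nontrivial input—local freeness of $\CQ$—is already in hand, and the compatibility of the $Z(\fg)$-linear adjunction with the $U(\fg)$-module structures is automatic from the centrality of $Z(\fg)$.
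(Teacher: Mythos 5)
Your proof is correct and follows what the paper clearly intends (no explicit proof is given, but the remark immediately after Lemma~\ref{l:Loc sim} that $\CQ$ is locally free is exactly the ingredient you need). Chaining the two adjunctions $\Loc' \dashv \Gamma'$ and $\Gamma' \dashv \coLoc'$ to identify $\Gamma'\circ\coLoc'$ as the right adjoint of $\Gamma'\circ\Loc'$, and then using that $\CQ$ is a dualizable (perfect) object of $Z(\fg)\mod$ so that $-\otimes_{Z(\fg)}\CQ^\vee$ is the right adjoint of $-\otimes_{Z(\fg)}\CQ$ on any $Z(\fg)\mod$-module category such as $\fg\mod_{\{\chi\}}$, is precisely the argument being signalled.
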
 

\sssec{}

The entire discussion in \secref{sss:fix Loc'}, and the conclusions of \lemref{l:Loc sim} and \corref{c:Loc sim}
transfer to the $H$-equivariant situation for any given $H\subset G$.

\ssec{Examples arising from representation theory}

\sssec{}

In this subsection we will prove the following:

\begin{thm}   \label{t:gH miraculous}
Let $H\subset G$ be \emph{spherical}, i.e., $H$ 
has finitely many orbits on the flag vatiety $X$. Then: 

\smallskip

\noindent{\em(a)} 
The category $\fg\mod_\chi^H$ is proper, Serre and Gorenstein, and the functors 
$\Se_{\fg\mod_\chi^H}$ and $\PsId_{\fg\mod_\chi^H}$ are mutually inverse equivalences.

\smallskip

\noindent{\em(b)} Ditto for the category $\fg\mod_{\{\chi\}}^H$.

\end{thm}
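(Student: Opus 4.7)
My plan is to verify the hypotheses of \corref{c:main} for both $\fg\mod_\chi^H$ and $\fg\mod_{\{\chi\}}^H$. The sphericity assumption on $H$ translates precisely to the condition that the stack $H\backslash X$ has finitely many isomorphism classes of $k$-valued points, so that \thmref{t:Y miraculous} applies to $\Dmod_\lambda(H\backslash X)$ and, more usefully for us, \thmref{t:mon miraculous} applies to $\Dmod(H\backslash \wt X)^{\lambda\on{-mon}}$. The overall strategy is to transfer the needed conditions from these geometric categories to the representation-theoretic ones via the localization adjunctions of \secref{sss:loc} and \secref{sss:fix Loc'}.

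I would prove (b) first. Since $U(\fg)$ has finite cohomological dimension, every compact object of $\fg\mod_{\{\chi\}}^H$ is bounded in its t-structure with finitely generated cohomologies (\secref{sss:fg vs comp}). By \remref{r:main main}, this immediately gives reflexivity together with the $\otimes$/$(-)^*$ compatibility required by \corref{c:main}, reducing the remaining task to showing that $\fg\mod_{\{\chi\}}^H$ is proper and that $\Se_{\fg\mod_{\{\chi\}}^H}$ and $\Se_{\fg\mod_{-\{\chi\}}^H}$ are proper. I would deduce these using the adjoint triple $(\Loc',\Gamma',\coLoc')$: the functor $\Loc'$ preserves compactness (as a left adjoint to the continuous $\Gamma'$), and by \lemref{l:Loc sim} the unit $\CM \to \Gamma'\Loc'(\CM)=\CM\underset{Z(\fg)}\otimes \CQ$ is a summand inclusion after completion at $\chi$, since $\CQ=\Sym(\ft)$ is a finite projective $Z(\fg)$-module of rank $|W|$. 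Consequently, any $\CHom_{\fg\mod_{\{\chi\}}^H}(\CM_1,\CM_2)$ with $\CM_1,\CM_2$ compact is a retract of $\CHom_{\Dmod(H\backslash\wt X)^{\lambda\on{-mon}}}(\Loc'\CM_1,\Loc'\CM_2)$ (up to the $\CQ$-twist), which is compact in $\Vect$ by the properness part of \thmref{t:mon miraculous}. The same scheme, combined with the comparison between the Serre functors on the two sides induced by the adjoint triple, gives properness of the Serre functors.

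To deduce (a) from (b), I would use the fully faithful forgetful embedding $(i_\chi)_*:\fg\mod_\chi^H\hookrightarrow \fg\mod_{\{\chi\}}^H$ together with its adjoints $(i_\chi)^*, (i_\chi)^!$. Since $(i_\chi)^!$ is continuous, $(i_\chi)_*$ preserves compactness, and full faithfulness yields the identification of Hom complexes, from which properness of $\fg\mod_\chi^H$ and the compatibility with $*$-duality are immediate. Reflexivity, and properness of the two Serre functors for $\fg\mod_\chi^H$, transfer through the same embedding, using the explicit compatibilities between $(i_\chi)_*$ and the canonical dualities recorded at the end of \secref{sss:chi hat} (including the shift by $\ell_\chi$ from \eqref{e:! and * chi}); note that even though compact objects in $\fg\mod_\chi^H$ are not bounded for irregular $\chi$, the required compactness of Hom complexes is inherited from the bounded-compact setting of $\fg\mod_{\{\chi\}}^H$ via full faithfulness.

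I expect the main obstacle to be the properness of the Serre functor in (b): the localization triple relates $\Se_{\fg\mod_{\{\chi\}}^H}$ to $\Se_{\Dmod(H\backslash\wt X)^{\lambda\on{-mon}}}$ only up to the twist by $\CQ$ from \lemref{l:Loc sim}, the determinant line $\ell_\lambda$ from \eqref{e:! and *}, and the Harish--Chandra shift $\lambda\rightsquigarrow -\lambda-2\rho$ from \secref{sss:loc and dual}; the bookkeeping of these factors needs to be done carefully, but the abstract formalism of \secref{s:Se and Ps} is set up precisely to absorb such twists.
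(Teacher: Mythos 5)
Your proposal is essentially correct and follows the same broad strategy as the paper -- verify the hypotheses of \corref{c:main} by transporting properness of $\Dmod_\lambda(H\backslash X)$ and $\Dmod(H\backslash \wt X)^{\lambda\on{-mon}}$ (via \thmref{t:Y miraculous} and \thmref{t:mon miraculous}) through the localization adjunctions. There are, however, a couple of organizational differences and one logical slip worth noting.

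The slip: you write that boundedness of compact objects in $\fg\mod_{\{\chi\}}^H$, together with \remref{r:main main}, ``immediately gives reflexivity.'' That is not what the remark says. The ``more generally'' clause of \remref{r:main main} takes reflexivity as a \emph{hypothesis}; boundedness of compacts only yields the eventual-coconnectivity needed for the $\otimes/(-)^*$-compatibility. Reflexivity for $\fg\mod_{\{\chi\}}^H$ (i.e.\ that $\CHom$ between compacts has finite-dimensional cohomologies) genuinely uses sphericity, via the D-module side. Your plan survives, of course, since you proceed to prove properness, which subsumes reflexivity -- but the intermediate claim is wrong, and the boundedness observation is not actually doing any work.

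On the organization: you prove properness of $\fg\mod_{\{\chi\}}^H$ directly using $(\Loc',\Gamma',\coLoc')$ together with the retract trick (based on $\CQ=\Sym(\ft)$ being finite free over $Z(\fg)$ with $Z(\fg)\hookrightarrow\CQ$ split), and then transfer to $\fg\mod_\chi^H$ through $(i_\chi)_*$. The paper instead proves properness of $\fg\mod_\chi^H$ first, which is cleaner: $\Loc$ is proper \emph{and fully faithful}, so $\CHom$ between compacts in $\fg\mod_\chi^H$ is literally a $\CHom$ between compacts in $\Dmod_\lambda(H\backslash X)$ with no retract argument needed. It then bootstraps to $\fg\mod_{\{\chi\}}^H$ using the $(i_\chi)^*,(i_\chi)_*,(i_\chi)^!$ adjunctions. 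Your route works but is slightly more roundabout for this step. For properness of the Serre functors, the paper also uses a $\CQ$-retract argument, but it runs it through $\coLoc'\circ\Se_{\fg\mod_{\{\chi\}}^H}\simeq\Se_{\Dmod(H\backslash\wt X)^{\lambda\on{-mon}}}\circ\Loc'$, an instance of \lemref{l:double right}(a); you should cite that lemma explicitly when you invoke ``the comparison between the Serre functors on the two sides induced by the adjoint triple,'' since that comparison is exactly what it furnishes. Likewise, for passing from (b) to (a) the paper uses \lemref{l:double right}(a) in the form $\Se_{\fg\mod_\chi^H}\circ(i_\chi)^*\simeq(i_\chi)^!\circ\Se_{\fg\mod_{\{\chi\}}^H}$, combined with the fact that $(i_\chi)_*$ is conservative (so compacts in $\fg\mod_\chi^H$ are generated by the image of $(i_\chi)^*$); your plan to use \lemref{l:double right}(b) on the fully faithful embedding $(i_\chi)_*$ is an equally valid variant.
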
 

\begin{rem}
Note that when $\chi$ is regular, the assertion of \thmref{t:gH miraculous}(a) follows immediately from
the fact that in this case the functor $\Loc$ is an equivalence, and \thmref{t:Y miraculous}.
\end{rem}

\sssec{}

First, we have a lemma:

\begin{lem}  \label{l:double right}
Let $F:\bC\to \bD$ be proper.  

\smallskip

\noindent{\em(a)} We have: $\Se_\bD\circ F\simeq (F^R)^R\circ \Se_\bC$.

\smallskip

\noindent{\em(b)} If $F$ is fully faithful, then $\Se_\bC\simeq F^R\circ \Se_\bD\circ F$. 

\end{lem}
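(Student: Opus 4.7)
My plan for both parts is to reduce everything to a computation on compact objects, using the chain of adjoints $F\dashv F^R\dashv (F^R)^R$. Its existence rests on properness of $F$: continuity of $F$ yields the right adjoint $F^R$ by the adjoint functor theorem for presentable DG categories, and $F$ proper is equivalent to $F^R$ being continuous, so $(F^R)^R$ exists as well.

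For part (a), I would test the two candidate functors $\Se_\bD\circ F$ and $(F^R)^R\circ\Se_\bC$ by applying $\CHom_\bD(\bd,-)$ with $\bd\in\bD^c$, evaluated at $\bc\in\bC^c$. Properness gives $F(\bc)\in\bD^c$, so the defining formula of $\Se_\bD$ combined with the $F\dashv F^R$ adjunction produces
\[
\CHom_\bD(\bd,\Se_\bD(F(\bc)))=\CHom_\bD(F(\bc),\bd)^*=\CHom_\bC(\bc,F^R(\bd))^*.
\]
Symmetrically, the $F^R\dashv(F^R)^R$ adjunction followed by the defining property of $\Se_\bC(\bc)$ (which is valid against any object of $\bC$ in its first slot, including $F^R(\bd)$, even though the latter need not be compact) yields
\[
\CHom_\bD(\bd,(F^R)^R(\Se_\bC(\bc)))=\CHom_\bC(F^R(\bd),\Se_\bC(\bc))=\CHom_\bC(\bc,F^R(\bd))^*.
\]
Since compact $\bd$'s generate $\bD$, this identifies the two functors on $\bC^c$, and I would then ind-extend to obtain the iso on all of $\bC$.

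For part (b), rather than trying to derive it from (a) --- which would require the separate, not automatic, assertion $F^R\circ(F^R)^R\simeq \Id_\bC$ --- I would verify it directly on compacts. For $\bc,\bc'\in\bC^c$ the chain
\[
\CHom_\bC(\bc',F^R\Se_\bD F(\bc))=\CHom_\bD(F(\bc'),\Se_\bD F(\bc))=\CHom_\bD(F(\bc),F(\bc'))^*=\CHom_\bC(\bc,\bc')^*=\CHom_\bC(\bc',\Se_\bC(\bc))
\]
uses, in order, the $F\dashv F^R$ adjunction, the defining formula of $\Se_\bD$ (applicable since both $F(\bc)$ and $F(\bc')$ lie in $\bD^c$ by properness), full faithfulness of $F$, and the defining formula of $\Se_\bC$. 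Both sides are visibly continuous in $\bc$, so the iso ind-extends from $\bC^c$ to all of $\bC$.

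There is no substantive obstacle: the argument is essentially definition unwinding plus adjunction manipulations. The only nuance is keeping track of the compactness hypotheses so that the defining formulas for the Serre functors are legitimately applicable, and of which composites are manifestly continuous so that the iso established on compact objects extends canonically to all of $\bC$.
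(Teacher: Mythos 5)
Your proof of part (a) is exactly the paper's argument: test both sides against compact $\bd$ via the two adjunctions $F\dashv F^R\dashv (F^R)^R$ and use properness to ensure $F(\bc)\in\bD^c$.

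For part (b), you diverge. The paper deduces (b) from (a) by observing that $F$ fully faithful implies $(F^R)^R$ fully faithful, so $F^R\circ(F^R)^R\simeq\Id_\bC$, and then composes (a) with $F^R$. You instead verify (b) directly on compacts via the chain $\CHom_\bC(\bc',F^R\Se_\bD F(\bc))\simeq\CHom_\bC(\bc,\bc')^*\simeq\CHom_\bC(\bc',\Se_\bC(\bc))$. Both are correct. Your parenthetical warning, however, is slightly off: the identity $F^R\circ(F^R)^R\simeq\Id_\bC$ \emph{is} automatic from full faithfulness of $F$ --- it is the right adjoint of the unit isomorphism $\Id_\bC\simeq F^R\circ F$ --- so the paper's route is not incurring any real extra cost. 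Your direct argument is cleaner in that it sidesteps even having to remark on adjoints of fully faithful functors, and it has the small advantage that all functors in $F^R\circ\Se_\bD\circ F$ are visibly continuous, so the ind-extension step is unambiguous; in part (a), the ind-extension implicitly uses continuity of $(F^R)^R$, which is not guaranteed by properness of $F$ alone (though it holds in every application the paper makes).
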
 

\begin{proof}
For $\bc\in \bC^c$ and $\bd\in \bD^c$, we have
$$\CHom_\bD(\bd,\Se_\bD\circ F(\bc))\simeq \CHom_{\bD}(F(\bc),\bd)^*\simeq \CHom_{\bC}(\bc,F^R(\bd))^*$$
and
$$\CHom_\bD(\bd,(F^R)^R\circ \Se_\bC(\bc))\simeq \CHom_\bC(F^R(\bd),\Se_\bC(\bc))\simeq 
\CHom_\bC(\bc,F^R(\bd))^*.$$

This proves point (a). For point (b) we note that the fact that $F$ is fully faithful implies that $(F^R)^R$ is such
as well. In particular $F^R\circ (F^R)^R\simeq \on{Id}_\bC$. 
Composing the isomorphism of point (a) with $F^R$, we arrive at the assertion of point (b). 

\end{proof}

\sssec{Proof of \thmref{t:gH miraculous}, Step 1}

We will verify that the conditions of \corref{c:main} hold. 

\medskip

We first verify that the categories in question are proper. For $\fg\mod_\chi^H$, this is a formal consequence of the
fact that $\Loc$ is proper and fully faithful, and the properness of $\Dmod_{\lambda}(X)$.

\medskip

The case of $\fg\mod_{\{\chi\}}^H$ follows formally from that of $\fg\mod_\chi^H$, using the $((i_\chi)^*,(i_\chi)_*,(i_\chi)^!)$
adjunctions as in the proof of \thmref{t:mon miraculous}.

\sssec{Proof of \thmref{t:gH miraculous}, Step 2}

We will now show that the functors $\Se_{\fg\mod_{\{\chi\}}^H}$ and 
$\Se_{(\fg\mod_{\{\chi\}}^H)^\vee}\simeq \Se_{\fg\mod_{\{-\chi\}}^H}$
are proper. By symmetry, it suffices to deal with the former functor. 

\medskip 

By \lemref{l:double right}(a), we have
$$\coLoc'\circ \Se_{\fg\mod_{\{\chi\}}^H}\simeq \Se_{\Dmod(H\backslash \wt{X})^{\lambda\on{-mon}}} \circ \on{Loc}'.$$

Composing with $\Gamma'$, and using \corref{c:Loc sim}, we obtain:
$$(- \underset{Z(\fg)}\otimes \CQ^\vee)\circ \Se_{\fg\mod_{\{\chi\}}^H}\simeq \Gamma'\circ \Se_{\Dmod(H\backslash \wt{X})^{\lambda\on{-mon}}} 
\circ \on{Loc}'.$$

Note that all the functors in the RHS preserve compactness (for $\Se_{\Dmod(H\backslash \wt{X})^{\lambda\on{-mon}}}$
we are using \thmref{t:mon miraculous}). 

\medskip

Hence, it remains to show that if $\CM\in \fg\mod_{\{\chi\}}^H$ is such that $\CM\underset{Z(\fg)}\otimes \CQ^\vee$
is compact, then so is $\CM$.  However, if $\CM\underset{Z(\fg)}\otimes \CQ^\vee$ is compact, then so is
$\CM\underset{Z(\fg)}\otimes \CQ^\vee\underset{Z(\fg)}\otimes \CQ$, and $\CM$ is a direct summand of the latter.

\sssec{Proof of \thmref{t:gH miraculous}, Step 3}

We will now show that the functors $\Se_{\fg\mod_\chi^H}$ and 
$\Se_{(\fg\mod_\chi^H)^\vee}\simeq \Se_{\fg\mod_{-\chi}^H}$
are proper. Again, by symmetry, it suffices to deal with the former functor. 

\medskip

Note that the functor 
$$(i_\chi)_*:\fg\mod_\chi^H\to \fg\mod_{\{\chi\}}^H$$
is conservative, because both
$$(i_\chi)_*:\fg\mod_\chi\to \fg\mod_{\{\chi\}} \text{ and } \oblv_H:\fg\mod_\chi^H\to \fg\mod_\chi$$
are conservative. 

\medskip

Hence, $\fg\mod_\chi^H$ is compactly generated by the essential image of $(\fg\mod_{\{\chi\}}^H)^c$ 
under $(i_\chi)^*$. Hence, it is enough to show that the functor
$$\Se_{\fg\mod_\chi^H}\circ (i_\chi)^*$$
sends compacts to compacts. 

\medskip

However, from \lemref{l:double right}(a), we obtain
$$\Se_{\fg\mod_\chi^H}\circ (i_\chi)^*\simeq (i_\chi)^!\circ \Se_{\fg\mod_{\{\chi\}}^H},$$
which isomorphic to $(i_\chi)^*\circ \Se_{\fg\mod_{\{\chi\}}^H}$ up to tensoring with $\ell_\chi$. 

\medskip

Now the assertion follows from the fact that $\Se_{\fg\mod_{\{\chi\}}^H}$ preserves compactness,
proved in Step 2.

\qed[\thmref{t:gH miraculous}]

\ssec{An application: a theorem of \cite{BBM}}

\sssec{}

As an application we will now (re)prove the following result (whose case (b) is a theorem from \cite{BBM}). 

\medskip

We take $H$ to be the subgroup $N$, the unipotent radical of a Borel in $G$. Take $\bC$ to be either

\smallskip

\noindent(a) $\fg\mod_\chi$, or 

\smallskip

\noindent(b) $\Dmod_\lambda(X)$.

\medskip

Recall the intertwining functor
$$\Upsilon:=\Av^N_*\circ \oblv_{N^-}:\bC^{N^-}\to \bC^N,$$
see \cite[Sect. 1.4]{Kim}, and similarly
$$\Upsilon^-:\bC^N\to \bC^{N^-}.$$

\sssec{}

We have:

\begin{thm}  \label{t:BBM}
The category $\bC^N$ is Serre, and we have a canonical isomorphism
$$\Se_{\bC^N}\simeq \Upsilon\circ \Upsilon^-[2\dim(X)].$$
\end{thm}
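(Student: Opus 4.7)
The argument combines Serre-ness from \thmref{t:gH miraculous} with an explicit geometric computation of the pseudo-identity functor, for which a lemma from \cite{Kim} is the crucial input.

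First, note that $\bC^N$ is Serre in both cases (a) and (b): the Bruhat decomposition $X = \bigsqcup_{w \in W} NwB/B$ exhibits $N$ as a spherical subgroup of $G$, so \thmref{t:gH miraculous} applies and gives Serre-ness along with a canonical identification $\Se_{\bC^N} \simeq \PsId_{\bC^N}^{-1}$. Thus the statement reduces to identifying $\PsId_{\bC^N}$ with a shift of the inverse of the long intertwining functor $\Upsilon \circ \Upsilon^-$.

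Second, I would reduce to the D-module case $\bC = \Dmod_\lambda(X)$. For regular $\chi$ this is immediate: the functor $\on{Loc}$ is a $G$-equivariant equivalence and hence intertwines both the averaging functors (and therefore $\Upsilon, \Upsilon^-$) and the abstractly defined Serre and pseudo-identity functors. For singular $\chi$ one works instead with the $(\Loc', \Gamma', \coLoc')$ adjunctions; since all the operators involved are $G$-equivariant, the correction module $\CQ$ of \secref{sss:fix Loc'} cancels out of the final identification.

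Third, for $\bC = \Dmod_\lambda(X)$, one computes the kernel
\[
(\Delta_{N\backslash X})_!(k_{N\backslash X}) \in \Dmod_{-\lambda, \lambda}(N\backslash X \times N\backslash X)
\]
defining $\PsId_{N\backslash X}$ via a geometric analysis of the diagonal, in which the open Bruhat cell (corresponding to $w_0$) and the opposite Bruhat stratification play the central role. The required identification of this kernel with the one underlying $(\Upsilon \circ \Upsilon^-)^{-1}[-2\dim(X)]$ is essentially the content of a lemma from \cite{Kim}. \textbf{The main obstacle} is tracking the cohomological shift $[2\dim(X)]$ and the twists $\lambda$ versus $-\lambda - 2\rho$: the shift arises from $\omega_X \simeq \CO(-2\rho)[\dim X]$, and its propagation through the localization reduction requires careful use of \secref{sss:loc and dual}, while on the geometric side it must match the shift coming from Kim's kernel computation.
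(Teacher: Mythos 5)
Your treatment of the D-module case $\bC = \Dmod_\lambda(X)$ is essentially the paper's argument: combine $\Se \simeq \PsId^{-1}$ for stacks with finitely many orbits (this is \thmref{t:Y miraculous}, not \thmref{t:gH miraculous}, which concerns $\fg\mod^H_\chi$) with the geometric identification of $\PsId_{N\backslash X}$ from \cite[Theorem 3.4.2]{Kim}. The paper cites that theorem directly rather than reproving the kernel computation, but that is a cosmetic difference.

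The real gap is in your reduction of the $\fg\mod_\chi$ case for singular $\chi$. You propose to pass through the $(\Loc', \Gamma', \coLoc')$ adjunction and claim the correction module $\CQ$ "cancels out"; this is not justified, and moreover $(\Loc', \Gamma')$ lives on $\fg\mod_{\{\chi\}}$, not $\fg\mod_\chi$, so another step is silently needed. More fundamentally, a fully faithful proper functor does \emph{not} in general intertwine abstractly defined pseudo-identity functors, so the route through $\PsId$ does not obviously transport from $\Dmod_\lambda(N\backslash X)$ to $\fg\mod_\chi^N$ when $\Loc$ is not an equivalence. The paper sidesteps all of this by never invoking $\PsId$ on the $\fg\mod_\chi$ side: it uses \lemref{l:double right}(b), which says that for $F$ proper and fully faithful one has $\Se_\bC \simeq F^R \circ \Se_\bD \circ F$. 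Applying this to $F = \Loc : \fg\mod_\chi^N \to \Dmod_\lambda(N\backslash X)$ gives
$$\Se_{\fg\mod_\chi^N} \simeq \Gamma \circ \Se_{\Dmod_\lambda(N\backslash X)} \circ \Loc \simeq \Gamma \circ \Upsilon \circ \Upsilon^- \circ \Loc[2\dim X] \simeq \Upsilon \circ \Upsilon^-[2\dim X],$$
uniformly in $\chi$, using only that $\Gamma, \Loc$ commute with averaging and that $\Gamma \circ \Loc \simeq \Id$. You should replace your second step with an invocation of this lemma; as written, your argument is incomplete for singular infinitesimal characters.
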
 

\begin{proof}

The assertion for $\bC=\Dmod_\lambda(X)$ is the combination of \thmref{t:Y miraculous} above and 
\cite[Theorem 3.4.2]{Kim}. 

\medskip

For $\bC=\fg\mod_\chi$ we have
\begin{multline*} 
\Se_{\fg\mod_\chi^N}\overset{\text{\lemref{l:double right}(b)}}\simeq \Gamma\circ \Se_{\Dmod_\lambda(N\backslash X)}\circ \on{Loc}\simeq \\
\simeq \Gamma\circ \Upsilon \circ \Upsilon^-\circ \on{Loc}[2\dim(X)]\simeq \Upsilon \circ \Gamma\circ \on{Loc}\circ \Upsilon^- [2\dim(X)]\simeq 
\Upsilon\circ \Upsilon^-[2\dim(X)],
\end{multline*}
since the functors $\Gamma$ and $\on{Loc}$ commute with all averaging functors.

\end{proof} 

\section{Serre functor and contragredient duality}  \label{s:contr}

The theme of this section is to compare the canonical duality functor $\BD^{\on{can}}_{\fg,\chi}$ on a category of the form $\fg\mod^H_\chi$
with various kinds of \emph{contragredient duality} functors. 

\ssec{A warm-up: algebraic representations}

We begin with the simplest case, namely, when $\fg=\fh$. 

\sssec{}

Let $H$ be an algebraic group. Consider the category
$$\fh\mod^H\simeq \Rep(H).$$

We note that there are two \emph{different} identifications
$$\Rep(H)^\vee \simeq \Rep(H).$$

One is given by \eqref{e:duality g H}; In this section, we will denote the corresponding contravariant self-equivalence of 
$(\fh\mod^H)^c$ by $\BD^{\on{can}}_{\fh,H}$ rather than $\BD^{\on{can}}_{\fh}$.

\medskip

The other is given by ind-extending the
contravariant self-equivalence
$$\BD^{\on{contr}}_H:\Rep(H)^c\to \Rep(H)^c, \quad V\mapsto V^\vee.$$
given by the passage to the \emph{dual} representation. 

\sssec{}

The composite of these two identifications is a self-equivalence of $\Rep(H)$. 
It is given by ind-extending the (covariant) self-equivalence
$$\BD^{\on{contr}}_H\circ \BD^{\on{can}}_{\fh,H}:\Rep(H)^c\to \Rep(H)^c.$$

\medskip

We claim:

\begin{prop} \label{p:fin dim dual}
The functor $\BD^{\on{contr}}_H\circ \BD^{\on{can}}_{\fh,H}$ is given by tensoring with the line $\ell_H:=\Lambda^{\dim(H)}(\fh)[\dim(\fh)]$. 
\end{prop}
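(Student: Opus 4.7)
Both sides of the desired natural isomorphism are continuous endofunctors of $\Rep(H)$, so by ind-extension it suffices to establish the isomorphism on the full subcategory $\Rep(H)^c$ of finite-dimensional representations. The plan is to unfold the two pairings underlying the functors, apply Lie algebra Poincar\'e duality to turn $\fh$-homology into $\fh$-cohomology twisted by $\det \fh$ with a shift by $\dim H$, and then identify the outcome with the contragredient pairing using a compatibility between renormalized de Rham cohomology on $\on{pt}/H$ and $H$-cohomology.

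Concretely, for $V,W \in \Rep(H)^c$, the two pairings are
\[
\langle V,W \rangle^{\on{contr}}_H \simeq C^\bullet(H, V \otimes W),
\qquad
\langle V,W \rangle_{\fh,H} \simeq \Gamma^{\on{ren}}_{\on{pt}/H}\bigl(V \otimes^L_{U(\fh)} W\bigr),
\]
where in both expressions $V \otimes W$ carries the diagonal $H$-action. Rewriting $\BD^{\on{contr}}_H \circ \BD^{\on{can}}_{\fh,H} \simeq -\otimes \ell_H$ as the relation between pairings $\langle V, W \rangle_{\fh,H} \simeq \langle V \otimes \ell_H, W \rangle^{\on{contr}}_H$, the task becomes identifying
\[
\Gamma^{\on{ren}}_{\on{pt}/H}\bigl(V \otimes^L_{U(\fh)} W\bigr) \simeq C^\bullet(H, V \otimes W \otimes \ell_H).
\]

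The first key input is Lie algebra Poincar\'e duality. The $H$-equivariant isomorphism $\Lambda^i(\fh) \simeq \Lambda^{d-i}(\fh^*) \otimes \det(\fh)$ (with $d = \dim H$) applied to the Chevalley--Eilenberg chain and cochain complexes of an $\fh$-module $M$ produces an $H$-equivariant quasi-isomorphism
\[
M \otimes^L_{U(\fh)} k \;\simeq\; C^\bullet(\fh, M \otimes \det \fh)[\dim H].
\]
Applied to $M = V \otimes W$, this rewrites the canonical pairing as $\Gamma^{\on{ren}}_{\on{pt}/H}(C^\bullet(\fh, V \otimes W \otimes \det \fh))[\dim H]$.

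The second key input is that the renormalization of $\Gamma_{\on{pt}/H}$ of \cite{DrGa1} is arranged so that composition with $C^\bullet(\fh,-)$ recovers algebraic group cohomology; more precisely, one has a natural identification $\Gamma^{\on{ren}}_{\on{pt}/H}(C^\bullet(\fh, N)) \simeq C^\bullet(H, N)$ for $N \in \Rep(H)$, reflecting that $\Gamma^{\on{ren}}$ is normalized so that $\Gamma^{\on{ren}}(\omega_{\on{pt}/H}) \simeq k$ and that the dualizing complex $\omega_{\on{pt}/H}$ absorbs exactly the $\det\fh[\dim H]$ discrepancy between the Lie algebra and the algebraic group side. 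Combining the two steps gives the required identification $\Gamma^{\on{ren}}_{\on{pt}/H}(V\otimes^L_{U(\fh)} W) \simeq C^\bullet(H, V\otimes W \otimes \ell_H)$, whence the proposition.

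The hardest part will be making the second input precise with the conventions of \cite{DrGa1}, since the renormalization is delicate when $H$ has a non-trivial reductive part (where honest $H$-invariants are exact) as opposed to a unipotent part (where they coincide with $C^\bullet(\fh,-)$). I would verify the compatibility directly in the two extreme cases $H$ reductive and $H$ unipotent, pinning down the shift by $\dim H$ and the twist by $\det \fh$, and then conclude for general $H$ via a Levi decomposition.
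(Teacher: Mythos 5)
Your plan diverges from the paper's proof in a structurally important way, and the place where it diverges is exactly where it has a gap.

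The paper never works with $\Gamma^{\on{ren}}_{\on{pt}/H}$ directly. Instead it proceeds in two steps. Step 1: establish the isomorphism \emph{after applying $\oblv_H:\Rep(H)\to\fh\mod$}, i.e. prove
$\BD^{\on{can}}_\fh\circ\oblv_H(V)\simeq\oblv_H(V^\vee)\otimes\ell_H$
by taking $\CHom_{\fh\mod}(-,\CM)$ of both sides for an arbitrary test module $\CM$ and invoking the Poincar\'e duality isomorphism between $C_\bullet(\fh,-)$ and $C^\bullet(\fh,-)$ up to $\ell_H^{\pm 1}$. This entirely sidesteps the renormalized pushforward on $\on{pt}/H$, because the duality of $\fh\mod$ itself is the elementary $\otimes_{U(\fh)}$ pairing. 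Step 2: the two functors $(-\otimes\ell_H^{-1})\circ\BD^{\on{contr}}_H\circ\BD^{\on{can}}_{\fh,H}$ and $\on{Id}$ on $\Rep(H)$ have kernels in $\Rep(H)\otimes\Rep(H)$; after applying the t-exact and conservative functor $\on{Id}\otimes\oblv_H$ they become isomorphic, so both kernels lie in the heart; and $\on{Id}\otimes\oblv_H$ is fully faithful on the heart, whence the kernels are already isomorphic. This descent step is what replaces the computation of $\Gamma^{\on{ren}}_{\on{pt}/H}$ that your plan requires.

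The gap in your plan is precisely the ``second key input,'' which you also flag as the part you are unsure of: the asserted identification $\Gamma^{\on{ren}}_{\on{pt}/H}(C^\bullet(\fh,N))\simeq C^\bullet(H,N)$ is not correct as a statement in the derived category. Already for $H=\BG_m$ and $N=k$ trivial, $C^\bullet(\fh,k)=k\oplus\fh^*[-1]$ is a two-term complex of trivial $H$-modules, and applying any $k$-linear continuous $\Gamma^{\on{ren}}$ cannot make this collapse to the one-dimensional $C^\bullet(\BG_m,k)=k$; the extra copy of $\fh^*[-1]$ simply does not go away. (More generally, the $H$-equivariant complex $C^\bullet(\fh,N)$ carries the ``extra'' factors $\Lambda^\bullet\fh^*$ which are not quotiented out by passage to $H$-invariants; the object that would compute $C^\bullet(H,N)$ is the \emph{relative} complex $(\Lambda^\bullet(\fh/\fh)^*\otimes N)^H$, which is a different thing.) The Poincar\'e duality step in your plan is fine and matches what the paper uses in its Step 1, but after it one is left holding an object of $\Rep(H)$ whose image under $\Gamma^{\on{ren}}_{\on{pt}/H}$ is not what you claim, and your proposed fallback --- checking the two extreme cases and then gluing along a Levi decomposition --- does not repair this, since the formula already fails in the reductive extreme and since $\Gamma^{\on{ren}}_{\on{pt}/H}$ for general $H$ does not factor through the reductive and unipotent pieces in the way a Levi-decomposition argument would need.

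If you want to keep your direct approach, the correct move is the one the paper makes: prove the identity in $\fh\mod$ (where the duality is explicit), and then \emph{deduce} the identity in $\Rep(H)$ by a formal argument, rather than trying to compute the $\Rep(H)$-level pairing via $\Gamma^{\on{ren}}_{\on{pt}/H}$ head-on. The descent trick --- both kernels land in the heart after $\oblv_H$, and $\oblv_H$ is fully faithful on hearts --- is the piece your plan is missing.
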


\begin{proof}

We first establish the corresponding isomorphism after composing with 
\begin{equation} \label{e:alg grp}
\oblv_H:\Rep(H)\simeq \fh\mod^H\overset{\oblv_H}\longrightarrow \fh\mod.
\end{equation}

For the latter, we have to establish an isomorphism
$$\BD^{\on{can}}_\fh\circ \oblv_H(V)\simeq \oblv_H(V^\vee)\otimes \ell_H, \quad V\in \Rep(H)^c.$$

Taking $\CHom_{\fh\mod}$ of both sides into $\CM\in \fh\mod$, we obtain that we need to establish a functorial isomorphism
between
$$\CHom_{\fh\mod}(\BD^{\on{can}}_\fh\circ \oblv_H(V),\CM)\simeq \on{C}_\bullet(\fh,\oblv_H(V)\otimes \CM)$$
and
$$\CHom_{\fh\mod}(\oblv_H(V^\vee)\otimes \ell_H,\CM)\simeq \ell_H^{-1}\otimes \on{C}^\bullet(\fh,\oblv_H(V)\otimes \CM).$$

\medskip

The required isomorphism follows now from 
$$\on{C}_\bullet(\fh,-) \simeq \ell_H^{-1}\otimes \on{C}^\bullet(\fh,-).$$

\medskip

Thus, we obtain that the endo-functors
$$(-\otimes \ell^{-1}_H)\circ \BD^{\on{contr}}_H\circ \BD^{\on{can}}_{\fh,H}:\Rep(H)\to \Rep(H) \text{ and } \on{Id}_{\Rep(H)}$$
are both given by objects in 
$$\Rep(H)^\vee\otimes \Rep(H)\simeq \Rep(H)\otimes \Rep(H)$$ that become canonically isomorphic after applying
the functor 
\begin{equation} \label{e:alg grp heart}
\on{Id}_{\Rep(H)}\otimes \oblv_H:\Rep(H)\otimes \Rep(H)\to \Rep(H)\otimes \fh\mod.
\end{equation} 

Since the latter is t-exact and conservative, we obtain that the above two objects both lie in 
$(\Rep(H)\otimes \Rep(H))^\heartsuit$. 

\medskip

Now, the restriction of the functor \eqref{e:alg grp heart} to $(\Rep(H)\otimes \Rep(H))^\heartsuit$
is \emph{fully faithful}. Hence, the above two objects are isomorphic in $\Rep(H)\otimes \Rep(H)$
itself.

\end{proof}

\sssec{}   \label{sss:from top}

In what follows we will need the following comparison result. Let $\bC$ be a DG category acted on by 
a reductive group $H$. 

\medskip

We claim that there is a natural transformation
\begin{equation} \label{e:top dr}
\Av^H_* \circ \oblv_H \to (-\otimes \ell_H^{-1})\otimes .
\end{equation} 

Indeed, the functor $\Av^H_* \circ \oblv_H$ is given by tensoring with $\on{C}^\bullet_{\dr}(H)$,  viewed as 
an object of $\Dmod(\on{pt}/H)$, i.e., the direct image of $k\in \Vect\simeq \Dmod(\on{pt})$ along the
projection $\on{pt}\to \on{pt}/H$. 

\medskip

Now, for any algebraic group, we have a canonical isomorphism 
$$\on{C}^\bullet_{\dr}(H)\simeq \on{C}^\bullet(\fh,R_H),$$
and if $H$ is reductive, the map
$$\on{C}^\bullet(\fh,k)\to \on{C}^\bullet(\fh,R_H)$$
is an isomorphism. Finally, we identify $\ell_H^{-1}$ with the top cohomology of $\on{C}^\bullet(\fh,k)$, using
$$\on{C}^\bullet(\fh,k)\simeq \on{C}_\bullet(\fh,k)\otimes \ell_H^{-1}.$$

\medskip

This provides the desired map
$$\on{C}^\bullet_{\dr}(H)\to \ell_H^{-1}\otimes k_{\on{pt}/H}$$
in $\Dmod(\on{pt}/H)$.

\ssec{The case of category $\CO$}  \label{ss:O}

In this subsection we study (the derived version of) the usual category $\CO$, i.e., the category  $\fg\mod_\chi^N$, where $N$ is the unipotent radical of a Borel subgroup.

\medskip

We will see that the discrepancy between the canonical duality functor $\BD^{\on{can}}_{\fg,\chi}$
and the \emph{usual contragredient duality} for category $\CO$ is given by the long intertwining functor $\Upsilon$. 

\sssec{}

Consider the equivalence 
\begin{equation}   \label{e:to be contr}
(\fg\mod_\chi^N)^\vee \simeq \fg\mod_{-\chi}^{N^-}
\end{equation} 
equal to the composition
$$(\fg\mod_\chi^N)^\vee \simeq \fg\mod_{-\chi}^N \overset{\Se_{\fg\mod_{-\chi}^N}}\longrightarrow 
\fg\mod_{-\chi}^N  \overset{\Upsilon^{-1}}\longrightarrow \fg\mod_{-\chi}^{N^-},$$
where the first arrow is the equivalence \eqref{e:duality g H chi}.

\medskip

Note that from \thmref{t:BBM} we obtain: 

\begin{cor}  \label{c:prel AG}
The identification \eqref{e:to be contr} is canonically isomorphic to
$$(\fg\mod_\chi^N)^\vee \simeq \fg\mod_{-\chi}^N \overset{\Upsilon^-[2\dim(X)]}\longrightarrow \fg\mod_{-\chi}^{N^-},$$
where the first arrow is the identification of \eqref{e:duality g H chi}.
\end{cor}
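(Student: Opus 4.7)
The plan is to directly substitute the formula for the Serre functor provided by \thmref{t:BBM} into the definition of the identification \eqref{e:to be contr}, and observe that two occurrences of $\Upsilon$ cancel.

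Concretely, take $\bC=\fg\mod_{-\chi}$ in \thmref{t:BBM}, which gives a canonical isomorphism
$$\Se_{\fg\mod_{-\chi}^N}\simeq \Upsilon\circ \Upsilon^-[2\dim(X)]$$
of endofunctors of $\fg\mod_{-\chi}^N$. By definition, the identification \eqref{e:to be contr} equals
$$\Upsilon^{-1}\circ \Se_{\fg\mod_{-\chi}^N}$$
composed with the tautological identification of \eqref{e:duality g H chi}. Plugging in the isomorphism above yields
$$\Upsilon^{-1}\circ \Upsilon\circ \Upsilon^-[2\dim(X)]\simeq \Upsilon^-[2\dim(X)],$$
which is precisely the claimed description.

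The only nonformal point is the legitimacy of the notation $\Upsilon^{-1}$, i.e., the fact that $\Upsilon:\fg\mod_{-\chi}^{N^-}\to \fg\mod_{-\chi}^N$ is invertible (so that cancellation $\Upsilon^{-1}\circ \Upsilon\simeq \on{Id}$ is canonical). This is standard from the theory of long intertwining functors, but it can also be extracted from the results already proved in this paper. Indeed, by \thmref{t:gH miraculous}(a), the category $\fg\mod_{-\chi}^N$ is Serre, so $\Se_{\fg\mod_{-\chi}^N}$ is an equivalence; combined with the isomorphism $\Se_{\fg\mod_{-\chi}^N}\simeq \Upsilon\circ \Upsilon^-[2\dim(X)]$ from \thmref{t:BBM}, this forces both $\Upsilon$ and $\Upsilon^-$ to be equivalences between $\fg\mod_{-\chi}^{N^-}$ and $\fg\mod_{-\chi}^N$ (by the same argument applied with the roles of $N$ and $N^-$ swapped).

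No significant obstacle is expected: once \thmref{t:BBM} is in hand, the corollary is a one-line formal manipulation. The main (minor) care needed is to ensure that the cancellation $\Upsilon^{-1}\circ \Upsilon\simeq \on{Id}$ is taken with respect to the same unit of adjunction/equivalence used to define $\Upsilon^{-1}$ in \eqref{e:to be contr}, so that the resulting isomorphism is genuinely canonical rather than canonical up to an automorphism of $\Upsilon^-$.
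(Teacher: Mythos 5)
Your argument coincides with the paper's: the paper introduces the corollary with the phrase ``from \thmref{t:BBM} we obtain,'' and the intended derivation is exactly the one-line substitution $\Upsilon^{-1}\circ\Se_{\fg\mod_{-\chi}^N}\simeq \Upsilon^{-1}\circ\Upsilon\circ\Upsilon^-[2\dim(X)]\simeq\Upsilon^-[2\dim(X)]$ that you perform. Your additional remarks on why $\Upsilon$ is invertible (and hence why the cancellation is canonical) are correct and simply make explicit something the paper treats as known when it writes $\Upsilon^{-1}$ in \eqref{e:to be contr}.
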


\medskip

Recall that $\BD_{\fg,\chi}^{\on{can}}$ denotes the contravariant equivalence
$$(\fg\mod_\chi^N)^c\simeq (\fg\mod_{-\chi}^N)^c,$$
corresponding to \eqref{e:duality g H chi} (note that, according to \secref{sss:loc and dual}, the functor $\Loc$ intertwines
$\BD_{\fg,\chi}^{\on{can}}$ with Verdier duality on $N\backslash X$). 

\medskip

Let $\BD_{\fg,\chi}^{\on{contr}}$ denote the contravariant equivalence
\begin{equation}   \label{e:contr O}
(\fg\mod_\chi^N)^c\simeq (\fg\mod_{-\chi}^{N^-})^c,
\end{equation} 
corresponding to \eqref{e:to be contr} (and similarly with the roles of $\chi/-\chi$ or $N/N^-$ swapped). 

\medskip

We can rephrase \corref{c:prel AG} as follows: 

\begin{cor} \label{c:AG}
The (covariant) equivalence 
$$\BD_{\fg,-\chi}^{\on{contr}} \circ \BD_{\fg,\chi}^{\on{can}}: (\fg\mod_\chi^N)^c \to (\fg\mod_\chi^{N^-})^c$$
is given by the functor $\Upsilon^-[2\dim(X)]$.
\end{cor}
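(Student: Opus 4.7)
The plan is to derive Corollary \ref{c:AG} as an almost immediate formal consequence of Corollary \ref{c:prel AG}, by passing to the contravariant equivalences on compact objects associated with the two identifications of $(\fg\mod_\chi^N)^\vee$.

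First, I would observe that Corollary \ref{c:prel AG} expresses the identification \eqref{e:to be contr} (which defines $\BD^{\on{contr}}_{\fg,\chi}$) as the composition of the canonical identification \eqref{e:duality g H chi} (which defines $\BD^{\on{can}}_{\fg,\chi}$) with the covariant equivalence $\Upsilon^-[2\dim(X)]\colon \fg\mod_{-\chi}^N \to \fg\mod_{-\chi}^{N^-}$. Passing to the level of compact objects, this translates precisely to
$$\BD^{\on{contr}}_{\fg,\chi} \simeq \Upsilon^-[2\dim(X)] \circ \BD^{\on{can}}_{\fg,\chi}$$
as contravariant functors $(\fg\mod_\chi^N)^c \to (\fg\mod_{-\chi}^{N^-})^c$. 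Applying the same argument with $\chi$ and $-\chi$ (and correspondingly $N$ and $N^-$) interchanged---this ``swapped'' variant is explicitly noted in the definition of $\BD^{\on{contr}}$ following \eqref{e:contr O}---one obtains analogously
$$\BD^{\on{contr}}_{\fg,-\chi} \simeq \Upsilon^-[2\dim(X)] \circ \BD^{\on{can}}_{\fg,-\chi}$$
as contravariant functors $(\fg\mod_{-\chi}^N)^c \to (\fg\mod_\chi^{N^-})^c$.

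Finally, I would compose with $\BD^{\on{can}}_{\fg,\chi}$ on the right, and invoke the involutivity of the canonical duality, i.e.,
$$\BD^{\on{can}}_{\fg,-\chi} \circ \BD^{\on{can}}_{\fg,\chi} \simeq \on{Id}_{(\fg\mod_\chi^N)^c},$$
to conclude
$$\BD^{\on{contr}}_{\fg,-\chi} \circ \BD^{\on{can}}_{\fg,\chi} \simeq \Upsilon^-[2\dim(X)].$$
There is no genuine obstacle at any step---the entire statement is a formal unpacking of Corollary \ref{c:prel AG} on the level of compact objects. The only point meriting attention is the careful bookkeeping of the $\chi \leftrightarrow -\chi$ and $N \leftrightarrow N^-$ symmetry when invoking the swapped version of Corollary \ref{c:prel AG}, and the fact that $\Upsilon^-[2\dim(X)]$ appearing in the two displays above is genuinely the same covariant functor (with no shifts or dualities to reconcile), so that the composition cleanly yields the desired answer.
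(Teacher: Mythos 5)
Your proof is correct and is exactly the route the paper intends: it presents \corref{c:AG} as a ``rephrasing'' of \corref{c:prel AG} and supplies no separate argument, and your unwinding---translate \corref{c:prel AG} into the equation $\BD^{\on{contr}}_{\fg,\chi}\simeq \Upsilon^-[2\dim(X)]\circ\BD^{\on{can}}_{\fg,\chi}$ on compacts, apply it again with $-\chi$ in place of $\chi$, then cancel via $\BD^{\on{can}}_{\fg,-\chi}\circ\BD^{\on{can}}_{\fg,\chi}\simeq \on{Id}$---is precisely what is being left implicit. One small wording issue: in passing to the swapped version you only interchange $\chi$ and $-\chi$; the roles of $N$ and $N^-$ relative to the duality stay as they were (both dualities still go from the $N$-equivariant side to the $N^-$-equivariant side), which is indeed what your displayed formula reflects, so the parenthetical ``(and correspondingly $N$ and $N^-$) interchanged'' should be dropped to avoid suggesting otherwise.
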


We will now show that the functor $\BD_{\fg,\chi}^{\on{contr}}$ is (the derived version of) the usual
contragredient duality on category $\CO$. 

\medskip

In particular, we obtain that \corref{c:AG} reproduces
the result of \cite[Theorem 1.4.6]{AG} that describes the interaction of the contragredient
and canonical dualities on category $\CO$.

\sssec{}

Let us denote by \begin{equation} \label{e:contr dualization}
\CM\mapsto \CM^\vee, \quad (\fg\mod_\chi^N)^{\heartsuit} \to ((\fg\mod_{-\chi}^{N^-})^\heartsuit)^{\on{op}}
\end{equation}
the contravariant functor given by assigning to $\CM$ the subspace of the abstract dual $\CM^*$, consisting of $N^-$-finite vectors.

\sssec{}

On the subcategory $$ (\fg\mod_\chi^N)^{\heartsuit,\on{f.g.}} \subset (\fg\mod_\chi^N)^{\heartsuit} $$ consisting of $\CM$ for which 
$\oblv_N (\CM) \in (\fg\mod_{\chi})^{\heartsuit}$ is finitely generated, the functor $\CM \mapsto \CM^{\vee}$ admits the following description: 

\medskip

Write $$\CM\simeq \underset{\mu}\oplus\, \CM_{(\mu)},$$ a direct sum of generalized eigenspaces with respect to $\ft$,
which are known to be finite-dimensional. Then $$\CM^\vee\simeq \underset{\mu}\oplus\, (\CM_{(\mu)})^*$$ (which acquires a natural $\fg$-module structure). In other words, $\CM \mapsto \CM^{\vee}$ is the "usual" contragradient duality.

\medskip

Moreover, it is known that in this way we obtain a contravariant equivalence 
$$(\fg\mod_\chi^N)^{\heartsuit,\on{f.g.}} \simeq (\fg\mod_{-\chi}^{N^-})^{\heartsuit,\on{f.g.}}.$$

\sssec{}

We claim:

\begin{thm}  \label{t:descr contr}
The ind-extension of the contravariant equivalence $\BD_{\fg,\chi}^{\on{contr}}$ of \eqref{e:contr O}
\begin{equation} \label{e:contr ext}
\BD_{\fg,\chi}^{\on{contr}}:\fg\mod_\chi^N\to (\fg\mod_{-\chi}^{N^-})^{\on{op}}
\end{equation} 
is t-exact when restricted to $ (\fg\mod_{\chi}^N)^{\on{f.g.}}$. The corresponding contravariant functor 
$$(\fg\mod_\chi^N)^{\heartsuit, \on{f.g.}} \to ((\fg\mod_{-\chi}^{N^-})^\heartsuit)^{\on{op}}$$
is given by the functor $\CM\mapsto \CM^\vee$ of \eqref{e:contr dualization}. 
\end{thm}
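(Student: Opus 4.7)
The first step is to convert $\BD^{\on{contr}}_{\fg,\chi}$ into a more explicit composition. By the construction of the identification $(\fg\mod^N_\chi)^\vee\simeq \fg\mod^{N^-}_{-\chi}$ used to define $\BD^{\on{contr}}_{\fg,\chi}$, it differs from the canonical identification $(\fg\mod^N_\chi)^\vee\simeq \fg\mod^N_{-\chi}$ by the equivalence $\Upsilon^{-1}\circ \Se_{\fg\mod^N_{-\chi}} : \fg\mod^N_{-\chi} \to \fg\mod^{N^-}_{-\chi}$. Plugging in the formula $\Se_{\fg\mod^N_{-\chi}}\simeq \Upsilon\circ \Upsilon^-[2\dim(X)]$ from \thmref{t:BBM}, this collapses to $\Upsilon^-[2\dim(X)]$, yielding
\begin{equation*}
\BD^{\on{contr}}_{\fg,\chi}\simeq \Upsilon^-[2\dim(X)]\circ \BD^{\on{can}}_{\fg,\chi}
\end{equation*}
on compact objects, and hence on the ind-extension as well.

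The next step is to evaluate this composition on a class of generators of $(\fg\mod^N_\chi)^{\heartsuit,\on{f.g.}}$, namely the Verma modules $M_\mu$. Via the localization functor $\Loc$ of \secref{sss:loc}, $M_\mu$ corresponds to the $!$-extension of a shifted rank-one local system on its $N$-orbit in $X$. By \secref{sss:loc and dual}, $\BD^{\on{can}}_{\fg,\chi}$ is intertwined by $\Loc$ with Verdier duality on $N\backslash X$, followed by a twist by $\CO(-2\rho)$ and a shift by $[\dim(X)]$; this sends the $!$-extension to the $*$-extension. The functor $\Upsilon^-$ is intertwined by $\Loc$ with the corresponding geometric intertwining from $N$-equivariant to $N^-$-equivariant twisted D-modules on $X$, and—combined with the shift $[2\dim(X)]$—it carries a shifted $*$-extension on an $N$-orbit to an unshifted $!$-extension on the opposite $N^-$-orbit, which is a concentrated-in-degree-zero object of $\Dmod_{-\lambda-2\rho}(N^-\backslash X)$. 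Applying $\Gamma$ yields an $N^-$-equivariant $\fg$-module that a direct weight-space comparison identifies with the classical dual Verma $M_\mu^\vee$, i.e.\ the image of $M_\mu$ under \eqref{e:contr dualization}.

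Since \eqref{e:contr dualization} is an exact contravariant equivalence on $(\fg\mod^N_\chi)^{\heartsuit,\on{f.g.}}$, and the Verma modules generate the latter heart, the functor $\BD^{\on{contr}}_{\fg,\chi}$ preserves the heart on the subcategory $(\fg\mod^N_\chi)^{\on{f.g.}}$, is therefore t-exact on it, and restricts on the heart to the classical contragredient duality. The main obstacle will be the careful bookkeeping of cohomological shifts and line-bundle twists as they pass through $\Loc$, Verdier duality, and $\Upsilon^-$, making sure that the $[2\dim(X)]$-shift supplied by \thmref{t:BBM} exactly compensates the shifts intrinsic to these geometric operations so that the final output lies in the heart. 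A secondary technical point is the case of irregular $\chi$, where $\Loc$ is no longer an equivalence and $(\fg\mod^N_\chi)^c\subsetneq (\fg\mod^N_\chi)^{\on{f.g.}}$; there one first works on $\fg\mod^N_{\{\chi\}}$ using the $(\Loc^\sim,\Gamma^\sim)$-adjunction of \secref{sss:fix Loc'}, and then descends along $(i_\chi)^*$ to recover the statement for $\fg\mod^N_\chi$.
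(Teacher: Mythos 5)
Your proposal takes a genuinely different route from the paper's, and it is worth comparing the two. The paper's proof is purely algebraic and never invokes localization: after observing (via the pairing formula) that $\BD^{\on{contr}}_{\fg,\chi}(\CM)$ lands in non-negative degrees and has $H^0\simeq\CM^\vee$, it reduces t-exactness to the concrete objects $M_i=U(\fg)_\chi\underset{U(\fn)}\otimes U(\fn)_i$ and $M_1^-$, and then settles the matter by the PBW-type statement that $U(\fg)$ is free over $U(\fn^-)\otimes U(\fn)\otimes Z(\fg)$, together with the fact that a dual Verma restricted to $\fn^-$ is the regular representation. This works uniformly in $\chi$ and requires no regularity. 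Your route instead localizes, rewrites $\BD^{\on{contr}}_{\fg,\chi}\simeq\Upsilon^-[2\dim X]\circ\BD^{\on{can}}_{\fg,\chi}$ (this first step is correct and is exactly \corref{c:AG}), and reads the answer off of Verdier duality plus the intertwining functor on $N$-equivariant versus $N^-$-equivariant twisted D-modules on $X$.

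The sketch is plausible, but there are real gaps you do not close. First, and most seriously, the entire geometric computation — that the composite $\Upsilon^-[2\dim X]\circ(-\otimes\CO(-2\rho))[\dim X]\circ\BD^{\on{Verdier}}_X$ carries the standard object on an $N$-orbit to the standard object on the opposite $N^-$-orbit, \emph{in cohomological degree zero, with the right twist} — is asserted rather than proved; you flag this yourself, but since the entire content of the theorem is precisely this degree-zero/shift statement, deferring it means the proof has not actually been given. Second, when $\chi$ is irregular the functor $\Loc$ is not an equivalence, Verma modules need not be compact, and $(\fg\mod^N_\chi)^c\subsetneq(\fg\mod^N_\chi)^{\on{f.g.}}$; your one-sentence remark about passing through $\fg\mod^N_{\{\chi\}}$ and descending along $(i_\chi)^*$ does not explain how the t-exactness statement on the $\on{f.g.}$ subcategory survives the descent, which is exactly the subtlety the $\{\chi\}$-versus-$\chi$ distinction exists to navigate. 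Third, the final passage from ``Vermas go to dual Vermas'' to t-exactness on all of $(\fg\mod^N_\chi)^{\on{f.g.}}$ is too quick: one needs the direction-specific bound $\BD^{\on{contr}}(\CM)\in(\fg\mod^{N^-}_{-\chi})^{\geq 0}$ for $\CM$ in the heart (the paper gets this directly from the pairing), and then an induction using the exactness of $H^0\circ\BD^{\on{contr}}\simeq(-)^\vee$. The pieces are there in spirit, but the argument as written would not convince a referee. In sum: a different and potentially interesting route, but the key geometric lemma and the irregular-$\chi$ bookkeeping would need to be supplied before this counts as a proof; the paper's algebraic argument sidesteps both issues.
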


\sssec{Proof of \thmref{t:descr contr}, Step 1}

It is enough to show that the functor \eqref{e:contr ext} sends an object $\CM\in (\fg\mod_\chi^N)^{\heartsuit,\on{f.g.}}$
to $\CM^\vee$.  

\medskip

By definition, for $\CM'\in \fg\mod_{-\chi}^{N^-}$, we have
\begin{equation} \label{e:Hom into contr}
\CHom_{\fg\mod_{-\chi}^{N^-}}(\CM',\BD_{\fg,\chi}^{\on{contr}}(\CM))\simeq
\langle \CM, \Upsilon(\CM') \rangle_{\fg,\chi,N}^*\simeq
\langle \CM,\CM'\rangle_{\fg,\chi}^*.
\end{equation} 

\medskip

Note that if $\CM'\in (\fg\mod_{-\chi}^{N^-})^{\leq 0}$, then 
$$\langle \CM,\CM'\rangle_{\fg,\chi}=\CM'\underset{U(\fg)_\chi}\otimes \CM$$ 
belongs to $\Vect^{\leq 0}$. This readily implies that $\BD_{\fg,\chi}^{\on{contr}}(\CM)\in (\fg\mod_{-\chi}^{N^-})^{\geq 0}$.

\medskip

Further, it is easy to see that for any $\CM\in (\fg\mod_\chi^N)^\heartsuit$, we have 
$$H^0\left(\langle \CM,\CM'\rangle^*\right)\simeq \Hom(H^0(\CM'),\CM^\vee),$$
hence $H^0(\BD_{\fg,\chi}^{\on{contr}}(\CM))\simeq \CM^\vee$. 

\medskip

Thus, it remains to show that $\BD_{\fg,\chi}^{\on{contr}}(\CM)\in (\fg\mod_{-\chi}^{N^-})^\heartsuit$. 

\sssec{Proof of \thmref{t:descr contr}, Step 2}

Every object of $(\fg\mod_\chi^N)^{\heartsuit,\on{f.g.}}$ admits a surjection from a finite direct sum of objects of the form
$$M_i:=U(\fg)_\chi \underset{U(\fn)}\otimes U(\fn)_i,$$
where $U(\fn)_i=U(\fn)/U(\fn)\cdot \fn^i$. 

\medskip

Hence, using the exactness of $\CM \mapsto H^0 (\BD^{\on{contr}}_{\fg,\chi} (\CM)) \simeq \CM^{\vee}$,  it suffices to show that 
$$M_i^\vee\simeq  H^0(\BD_{\fg,\chi}^{\on{contr}}(M_i)) \to \BD_{\fg,\chi}^{\on{contr}}(M_i)$$
is an isomorphism. 

\medskip

Furthermore, objects of the form
$$M^-_j:=U(\fg)_{-\chi} \underset{U(\fn^-)}\otimes U(\fn^-)_j$$
compactly generate $\fg\mod_{-\chi}^{N^-}$. Hence, it suffices to show that the map
$$
\CHom_{\fg\mod_{-\chi}^{N^-}}(M^-_j,M_i^\vee)\to \CHom_{\fg\mod_{-\chi}^{N^-}}(M^-_j,\BD_{\fg,\chi}^{\on{contr}}(M_i))
$$
is an isomorphism. 

\medskip

Each $M_i$ (resp., $M^-_j$) admits a filtration with subquotients of the form $M_1$ (resp., $M^-_1$). Hence, it is
enough to show that the map
\begin{equation} \label{e:comp Verma and dual}
\CHom_{\fg\mod_{-\chi}^{N^-}}(M^-_1,M_1^\vee)\to \CHom_{\fg\mod_{-\chi}^{N^-}}(M^-_1,\BD_{\fg,\chi}^{\on{contr}}(M_1))
\end{equation}
is an isomorphism. 

\medskip

We know that \eqref{e:comp Verma and dual} is an isomorphism at the level of $H^0$. Hence, it suffices to show
that both sides in  \eqref{e:comp Verma and dual} are acyclic off degree $0$. 

\sssec{Proof of \thmref{t:descr contr}, Step 3}

We have:
$$\CHom_{\fg\mod_{-\chi}^{N^-}}(M^-_1,\BD_{\fg,\chi}^{\on{contr}}(M_1))\simeq
(M^-_1\underset{U(\fg)_\chi}\otimes M_1)^*,$$
while 
$$M^-_1\underset{U(\fg)_\chi}\otimes M_1 \simeq k\underset{U(\fn^-)}\otimes U(\fg)_\chi\underset{U(\fn)}\otimes k,$$
and the above expression is indeed acyclic off degree $0$, as it is known that $U(\fg)$ is free as a module over 
$U(\fn^-)\otimes U(\fn)\otimes Z(\fg)$. 

\medskip

To prove that $\CHom_{\fg\mod_{-\chi}^{N^-}}(M^-_1,M_1^\vee)$ is acyclic off degree $0$,
we note that for any object
$\CN\in \fg\mod_{-\chi}^{N^-}$, we have 
$$\CHom_{\fg\mod_{-\chi}^{N^-}}(M^-_1,\CN) \simeq \on{C}^\bullet(\fn^-,\CN).$$

So we need to show that that $\on{C}^\bullet(\fn^-,M_1^\vee)$ is acyclic off degree $0$.

\medskip

We note that $M_1$ admits a filtration by Verma modules. Hence, it is enough to show that for a Verma module $M$, the object
$\on{C}^\bullet(\fn^-,M^\vee)$ is acyclic off degree $0$.

\medskip

Now, $M^\vee$ is a \emph{dual Verma module}, which is isomorphic to $R_{N^-}$ as a module over $\fn^-$.
This implies that $\on{C}^\bullet(\fn^-,M^\vee)$ is acyclic off degree $0$, as required.

\ssec{The case of Harish-Chandra modules}  \label{ss:gK}

We will now consider the main case of interest in this paper: the interaction of the canonical and contragredient
duality functors on (the derived version of) the category of Harish-Chandra modules. 

\medskip

We will see that the discrepancy between the two is given by the pseudo-identity functor.

\sssec{}

We now consider a symmetric subgroup $K \subset G$, i.e. $K = G^{\theta}$ for an involution $\theta$ of $G$. Such a subgroup 
is (connected) reductive, and is spherical 
(i.e. has finitely many orbits on the flag variety $X$)\footnote{
In fact, everything in this section remains valid when $K$ is any (connected) reductive spherical subgroup in $G$.}.

\medskip

Consider the equivalence
\begin{equation} \label{e:to be contr gK}
(\fg\mod_\chi^K)^\vee \simeq \fg\mod_{-\chi}^K \overset{\Se_{\fg\mod_{-\chi}^K}}\longrightarrow \fg\mod_{-\chi}^K
\overset{-\otimes \ell_K}\longrightarrow \fg\mod_{-\chi}^K,
\end{equation} 
where the first arrow is the equivalence \eqref{e:duality g H chi}, and $\ell_K=\Lambda^{\dim(K)}(\fk)[\dim(K)]$. 

\medskip

Let $\BD_{\fg,\chi}^{\on{contr}}$ denote the contravariant equivalence
\begin{equation} \label{e:contr gK}
(\fg\mod_\chi^K)^c\simeq (\fg\mod_{-\chi}^K)^c,
\end{equation} 
corresponding to \eqref{e:to be contr gK}, and similarly for $-\chi$. 

\medskip

By definition, we have
\begin{equation} \label{e:Serre Drinf}
\BD_{\fg,-\chi}^{\on{contr}}\circ \BD_{\fg,\chi}^{\on{can}}\simeq (-\otimes \ell_K) \circ \Se_{\fg\mod_\chi^K}
\end{equation} 
and by \thmref{t:gH miraculous}, 
\begin{equation} \label{e:Drinf Serre}
\BD_{\fg,-\chi}^{\on{can}}\circ \BD_{\fg,\chi}^{\on{contr}}\simeq (-\otimes \ell^{-1}_K) \circ \on{Ps-Id}_{\fg\mod_\chi^K}.
\end{equation} 

\sssec{}

We will now show that $\BD_{\fg,\chi}^{\on{contr}}$ is (the derived version of) the ``usual" contragredient duality 
for Harish-Chandra modules.

\medskip

In particular, we obtain that \eqref{e:Drinf Serre} gives an expression to the
composition of the contragredient duality and ``cohomological" duality on Harish-Chandra modules. In
the context of $\fp$-adic groups, in \cite{BBK}, such a composition is shown to be isomorphic to the 
\emph{Deligne-Lusztig} functor.

\medskip

Hence, we obtain that in the context of Harish-Chandra modules, the pseudo-identity functor $\on{Ps-Id}_{\fg\mod_\chi^K}$
plays a role analogous to that of the Deligne-Lusztig functor for $\fp$-adic groups. 

\medskip

Note that under the localization equivalence (say, when $\chi$ is regular), the functor $\on{Ps-Id}_{\fg\mod_\chi^K}$
corresponds to the pseudo-identity functor $\on{Ps-Id}_{K\backslash X}$. As was mentioned in the introduction, 
certain parallel features of functors of the form $\on{Ps-Id}_{\CY}$ and Deligne-Lusztig type functors have been observed 
elsewhere in geometric representation theory, see, e.g., \cite{DW,Ga2,Wa}.  This analogy is further reinforced by the properties
of the functor $\on{Ps-Id}_{K\backslash X}$ expressed in Conjectures \ref{c:J} and \ref{c:CT} below. 

\sssec{}

Whereas the isomorphism \eqref{e:Drinf Serre} may look somewhat surprising (given the geometric nature of
the functor $\on{Ps-Id}_{\fg\mod_\chi^K}$), the isomorphism \eqref{e:Serre Drinf} is something one could 
have expected, based on the following example (we are grateful to J.~Lurie for pointing this out to us): 

\medskip

Let $A$ be an associative algebra that is finite-dimensional over $k$. On the one hand, dualization over $k$ defines a contravariant
functor
$$\BD^{\on{contr}}_A:A\mod^c\to A^{\on{op}}\mod.$$

\medskip

On the other hand, we have a canonical equivalence
$$(A\mod)^\vee\simeq A^{\on{op}}\mod,$$ 
given by the pairing
$$M',M\mapsto M'\underset{A}\otimes M, \quad A^{\on{op}}\mod\otimes A\mod\to \Vect.$$

Denote the resulting contravariant equivalence by
$$\BD_A^{\on{can}}:A\mod^c\simeq A^{\on{op}}\mod^c.$$

Composing, we obtain a covariant functor
$$\BD_{A^{\on{op}}}^{\on{contr}}\circ \BD_A^{\on{can}}:A\mod^c\to A\mod.$$

It is easy to see that this functor is the restriction to $A\mod^c$ of the Serre functor $\Se_{A\mod}$.

\sssec{}  \label{sss:dual gK}

On the level of abelian categories, we have the "usual" contragredient duality functor
\begin{equation}\label{e:contr dualization gK}
\CM\mapsto \CM^\vee,\quad (\fg\mod_\chi^K)^{\heartsuit}\to ((\fg\mod_{-\chi}^K)^{\heartsuit})^{\on{op}}
\end{equation}
defined by sending $\CM$ to the subspace of the abstract dual $\CM^*$, consisting of $K$-finite vectors. It is an exact functor.

\medskip

Writing $$\CM\simeq \underset{\alpha}\oplus\, \CM_{\alpha},$$ a direct sum of isotypic subspaces with respect to $K$, the module $\CM^{\vee}$ can be described as $$\CM^\vee\simeq \underset{\alpha}\oplus\, (\CM_{\alpha})^*.$$

\sssec{}

It is not hard to show using Localization theory that a module $\CM \in (\fg\mod_{\chi}^K)^{\heartsuit}$ is finitely generated if and only if it is 
of finite length, and that is if and only if it is admissible, where the latter means that each $\CM_{\alpha}$ (notation as above) is finite-dimensional.

\medskip

This shows that $\CM \mapsto \CM^{\vee}$ restricts to a contravariant equivalence 
$$(\fg\mod_\chi^K)^{\heartsuit,\on{f.g.}} \simeq (\fg\mod_{-\chi}^{K})^{\heartsuit,\on{f.g.}}.$$

\sssec{}

We are going to prove: 

\begin{thm}  \label{t:descr contr gK}
The ind-extension of the contravariant equivalence $\BD_{\fg,\chi}^{\on{contr}}$ of \eqref{e:contr gK}
\begin{equation} \label{e:contr ext gK}
\BD_{\fg,\chi}^{\on{contr}}:\fg\mod_\chi^K\to (\fg\mod_{-\chi}^K)^{\on{op}}
\end{equation} 
is t-exact when restricted to $ (\fg\mod_{\chi}^K)^{\on{f.g.}}$. The corresponding contravariant functor $$(\fg\mod_\chi^K)^{\heartsuit, \on{f.g.}} \to ((\fg\mod_{-\chi}^K)^{\heartsuit})^{\on{op}}$$
is given by the functor $\CM\mapsto \CM^\vee$ of \eqref{e:contr dualization gK}.
\end{thm}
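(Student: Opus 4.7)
My plan is to follow the template of the proof of \thmref{t:descr contr} (the analog for category $\CO$), adapted to the $(\fg,K)$ setting with $K$ reductive. The first step is to unwind the definition of $\BD^{\on{contr}}_{\fg,\chi}$ from \eqref{e:to be contr gK}: as a contravariant equivalence on compact objects, $\BD^{\on{contr}}_{\fg,\chi}(\CM) \simeq (-\otimes \ell_K) \circ \Se_{\fg\mod_{-\chi}^K} \circ \BD^{\on{can}}_{\fg,\chi}(\CM)$. Combining this with the defining property of the Serre functor (\secref{ss:Se}) and of $\BD^{\on{can}}_{\fg,\chi}$ (\secref{sss:DG categ}) yields, for $\CM \in (\fg\mod_\chi^K)^c$ and arbitrary $\CM' \in \fg\mod_{-\chi}^K$, a natural isomorphism
$$\CHom_{\fg\mod_{-\chi}^K}(\CM', \BD^{\on{contr}}_{\fg,\chi}(\CM)) \simeq \langle \CM, \CM'\rangle^*_{\fg,\chi,K} \otimes \ell_K.$$

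For the t-exactness claim, the pairing $\langle-,-\rangle_{\fg,\chi,K}$ factors as forgetting $K$-equivariance, tensoring over $U(\fg)_\chi$, then applying renormalized de Rham along $\on{pt}/K\to \on{pt}$. Since $K$ is reductive, the last step is t-exact up to the shift/twist encoded in $\ell_K$ (cf.\ \secref{sss:from top}); combined with the finite Tor-dimension of $U(\fg)_\chi$, this shows that for $\CM \in (\fg\mod_\chi^K)^{\heartsuit,\on{f.g.}}$ the object $\BD^{\on{contr}}_{\fg,\chi}(\CM)$ lies in $(\fg\mod_{-\chi}^K)^{\geq 0}$. For the identification of $H^0$, admissibility (\secref{sss:dual gK}) gives $\CM = \bigoplus_\alpha \CM_\alpha$ with finite-dimensional $K$-isotypic components; taking $H^0$ of the displayed formula then yields $H^0\,\BD^{\on{contr}}_{\fg,\chi}(\CM) \simeq \CM^\vee$ in the sense of \eqref{e:contr dualization gK}.

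The remaining task is to prove vanishing of $H^{>0}\,\BD^{\on{contr}}_{\fg,\chi}(\CM)$. Using that $(\fg\mod_\chi^K)^{\heartsuit,\on{f.g.}}$ is generated under finite colimits by generalized Vermas $N(V) := U(\fg)_\chi \otimes_{U(\fk)} V$ for $V \in \Rep(K)^{c,\heartsuit}$, and that $\CM \mapsto \CM^\vee$ is exact on the heart, it suffices to show that $\BD^{\on{contr}}_{\fg,\chi}(N(V))$ is concentrated in degree $0$. Testing against the analogous generators $N^-(W)$ of $\fg\mod_{-\chi}^K$ and applying the first step reduces this to computing $\langle N(V), N^-(W)\rangle_{\fg,\chi,K}$; using the PBW decomposition $U(\fg)\simeq U(\fk)\otimes S(\fp)$ for the Cartan decomposition $\fg = \fk \oplus \fp$, together with the vanishing of higher $\Ext$'s in $\Rep(K)$ (since $K$ is reductive), one concludes the pairing sits in the expected single cohomological degree.

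The main obstacle is the careful bookkeeping of cohomological shifts and twists by $\ell_K$ arising from the renormalized de Rham pushforward on $\on{pt}/K$, together with the explicit degree-wise computation of the pairing on pairs of generalized Vermas in the final step, which relies crucially on the Cartan-type structure of the symmetric pair $(\fg, \fk)$.
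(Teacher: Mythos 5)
Your ingredients are essentially the right ones, but the architecture imported from the category-$\CO$ proof introduces two problems that the paper's own argument avoids.

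First, the appeal to ``the finite Tor-dimension of $U(\fg)_\chi$'' is unsound: as noted in the paper (\secref{sss:fg vs comp}), $U(\fg)_\chi$ has \emph{infinite} cohomological dimension when $\chi$ is irregular. Luckily you do not need it for what you use it for: the derived tensor $\CM' \overset{L}{\underset{U(\fg)_\chi}\otimes} \CM$ is right t-exact without any finiteness hypothesis, and for reductive $K$ the renormalized pushforward along $\on{pt}/K \to \on{pt}$ is t-exact. So the $\geq 0$-direction is fine, just not for the reason you gave.

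Second, and more seriously, the reduction of $H^{>0}$-vanishing to the objects $N(V)$ is not justified as stated. Asserting that $(\fg\mod_\chi^K)^{\heartsuit,\on{f.g.}}$ is ``generated under finite colimits'' by the $N(V)$ is incorrect; what is true is that every object admits a surjection from a finite sum of them. To conclude that $\BD^{\on{contr}}_{\fg,\chi}(\CM)$ is concentrated in degree $0$ from the corresponding statement for the $N(V)$'s one would need a projective resolution argument that terminates, i.e.\ some cohomological-dimension control, which is exactly what is unavailable for irregular $\chi$. The detour is in fact unnecessary: the maneuver you invoke at the very last step---testing against $N^{-}(W) = P_{\rho,-\chi}$---should simply be applied to an arbitrary $\CM$, not only to $N(V)$. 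Concretely, for any $\CM$ one has $\CHom_{\fg\mod_{-\chi}^K}(P_{\rho,-\chi}, \BD^{\on{contr}}_{\fg,\chi}(\CM)) \simeq \langle \CM, P_{\rho,-\chi}\rangle_{\fg,\chi,K}^* \otimes \ell_K \simeq \langle \CM,\rho\rangle_{\fk,K}^*\otimes\ell_K$, and by \propref{p:fin dim dual} this is $\CHom_{\Rep(K)}(\rho^*,\CM)^*$; since the functors $\CHom_{\Rep(K)}(\rho^*,-)$ are t-exact in $\CM$ and jointly detect the t-structure as $\rho$ varies, this gives \emph{full} t-exactness (both directions at once, and in fact on all of $\fg\mod_\chi^K$, not just $\on{f.g.}$) in a single stroke, with no reduction to generators of the source. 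That is precisely the paper's Step~1. Your $H^0$-identification via admissibility then becomes the content of the paper's Step~2, which the paper handles via the comparison map~\eqref{e:top dr} and right-t-exactness of the pairing. The Cartan-decomposition computation $U(\fg)\simeq U(\fk)\otimes S(\fp)$ you propose for evaluating the pairing on pairs of generalized Vermas is not needed once you route through \propref{p:fin dim dual}.
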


\sssec{Proof of \thmref{t:descr contr gK}, Step 1}

We will in fact show that $$\BD_{\fg,\chi}^{\on{contr}} : \fg\mod^K_{\chi} \to (\fg\mod^K_{-\chi})^{op}$$ is t-exact on the whole category, rather than just on $ (\fg\mod_{\chi}^K)^{\on{f.g.}}$.

\medskip

For $\rho\in \on{Irrep}(K)$, set $$P_{\rho , -\chi} := U(\fg)_{-\chi} \underset{U(\fk)}\otimes \rho \in \fg\mod_{-\chi}^K.$$

\medskip

It is clear that
$$\CHom_{\fg\mod_{-\chi}^K}(P_{\rho,-\chi},\CN)\simeq \CHom_{\Rep(K)}(\rho,\CN).$$

Hence, an object $\CN\in \fg\mod_{-\chi}^K$ belongs to $(\fg\mod_{-\chi}^K)^{\leq 0 , \heartsuit , \geq 0}$
if and only if the objects
$$\CHom_{\fg\mod_{-\chi}^K}(P_{\rho,-\chi},\CN)\in \Vect$$
belong to $\Vect^{\leq 0,\heartsuit,\geq 0}$ for all $\rho$.

\medskip

Notice that, by definition, for $\CM \in \fg\mod_{\chi}^K$ and $\CM'\in \fg\mod_{-\chi}^K$ we have
$$\CHom_{\fg\mod_{-\chi}^K}(\CM',\BD_{\fg,\chi}^{\on{contr}}(\CM))\simeq
\langle \CM,\CM'\rangle_{\fg,\chi,K}^*\otimes \ell_K.$$

It thus remains to show that

\begin{equation}   \label{e:functor}
\CM \mapsto \langle \CM,P_{\rho,-\chi} \rangle_{\fg,\chi,K}\otimes \ell_K^{-1}
\end{equation}
is a t-exact functor $ \fg\mod_{\chi}^K \to \Vect$ (for a given $\rho$).

\medskip

We have $$\langle \CM,P_{\rho,-\chi} \rangle_{\fg,\chi,K} \simeq 
\langle \CM,\rho\rangle_{\fk,K}.$$

\medskip

However, according to \propref{p:fin dim dual}
$$\langle \CM,\rho\rangle_{\fk,K}\simeq  \CHom_{\Rep(K)}(\rho^*,\CM)\otimes \ell_K,$$ so that the functor \eqref{e:functor} is isomorphic to $$ \CM \mapsto \CHom_{\Rep(K)}(\rho^*,\CM),$$ which is $t$-exact.

\sssec{Proof of \thmref{t:descr contr gK}, Step 2}

From Step 1, we deduce that the pairing $$ \CM , \CM' \mapsto \langle \CM , \CM' \rangle_{\fg , \chi , K} \otimes \ell_K^{-1}$$ is right t-exact. 
Hence, the comparison map (see \secref{sss:from top}) 
$$ \langle \oblv_K(\CM),\oblv_K(\CM')\rangle_{\fg,\chi} 
\simeq \langle \Av_*^K \circ \oblv_K(\CM) , \CM' \rangle_{\fg , \chi , K} \overset{\text{\eqref{e:top dr}}}\longrightarrow
\langle \CM,\CM'\rangle_{\fg,\chi,K}\otimes \ell_K^{-1} $$ 
induces an isomorphism on $H^0$. 

\medskip

This implies that for $\CM  \in (\fg\mod^K_{\chi} )^{\heartsuit}$ and $\CM' \in (\fg\mod^K_{-\chi} )^{\heartsuit}$, we have:
$$\on{Hom}_{(\fg\mod^K_{-\chi})^{\heartsuit}} (\CM' , \BD_{\fg , \chi}^{\on{contr}} (\CM)) \simeq H^0 \left(  \langle \CM , \CM' \rangle_{\fg , \chi , K} \otimes \ell_K^{-1} \right)^* \simeq $$
$$ \simeq H^0 \left( \langle \oblv_K (\CM) , \oblv_K (\CM') \rangle_{\fg , \chi} \right) ^* = H^0 \left( \CM' \underset{U(\fg)_{\chi}}\otimes \CM \right)^*.$$ 

\medskip

This isomorphism identifies $H^0(\BD^{\on{contr}}_{\fg,\chi} (\CM))$ with $\CM^{\vee}$.

\ssec{The parabolic case}\label{ss:dualmixed}

\sssec{}

We will now consider a hybrid of the situations considered in Sects. \ref{ss:O} and \ref{ss:gK}.
Namely, let $G$ be a reductive group, equipped with an involution $\theta$.
Let $P$ a minimal $\theta$-compatible parabolic, so that $P^-=\theta(P)$ is an opposite parabolic.
Set 
$$M_K=K\cap P\cap P^-.$$

Let $N$ (resp., $N^-$) denote the unipotent radical of $P$ (resp., $P^-$). 

\medskip

We will consider the categories
$$\fg\mod_\chi^{M_K\cdot N} \text{ and } \fg\mod_\chi^{M_K\cdot N^-}.$$

\begin{rem}
The results of this subsection are equally applicable when instead of $P,P^-$ we take any two opposite parabolics and instead of $M_K$ we take the entire Levi subgroup $M=P\cap P^-$.
\end{rem}

\sssec{}

We consider the equivalence 
\begin{equation}   \label{e:to be contr par}
(\fg\mod_\chi^{M_K\cdot N})^\vee \simeq \fg\mod_{-\chi}^{M_K\cdot N^-}
\end{equation} 
given by the composition
$$(\fg\mod_\chi^{M_K\cdot N})^\vee \simeq \fg\mod_{-\chi}^{M_K\cdot N} \overset{\Se_{\fg\mod_{-\chi}^{M_K\cdot N}}}\longrightarrow 
\fg\mod_{-\chi}^{M_K\cdot N}  \overset{\Upsilon^{-1}}\longrightarrow \fg\mod_{-\chi}^{M_K\cdot N^-}
\overset{-\otimes \ell_{M_K}}\simeq \fg\mod_{-\chi}^{M_K\cdot N^-},$$
where the first arrow is the equivalence \eqref{e:duality g H chi}.

\medskip

Let $\BD_{\fg,\chi}^{\on{contr}}$ denote the resulting contravariant equivalence
\begin{equation}   \label{e:contr par}
(\fg\mod_\chi^{M_K\cdot N})^c\to (\fg\mod_\chi^{M_K\cdot N^-})^c
\end{equation}

\medskip

As in \corref{c:prel AG}, using \cite[Proposition 4.1.7]{Kim}, we obtain:  

\begin{prop}  \label{p:parab AG}
The identification \eqref{e:to be contr} is canonically isomorphic to
$$(\fg\mod_\chi^{M_K\cdot N})^\vee \simeq \fg\mod_{-\chi}^{M_K\cdot N} \overset{\Upsilon^-[2\dim(X)-\dim(M_K)]}\longrightarrow 
\fg\mod_{-\chi}^{M_K\cdot N^-}
\overset{-\otimes \ell_{M_K}}\simeq \fg\mod_{-\chi}^{M_K\cdot N^-},$$
where the first arrow is the identification of \eqref{e:duality g H chi}.
\end{prop}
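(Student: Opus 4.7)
The two identifications of $(\fg\mod_\chi^{M_K\cdot N})^\vee$ with $\fg\mod_{-\chi}^{M_K\cdot N^-}$ both end with the twist $-\otimes \ell_{M_K}$ and both begin with the identification \eqref{e:duality g H chi}. Cancelling these common pieces, the proposition is equivalent to an isomorphism of endofunctors
\[
\Upsilon^{-1}\circ \Se_{\fg\mod_{-\chi}^{M_K\cdot N}} \simeq \Upsilon^-[2\dim(X)-\dim(M_K)],
\]
or equivalently, a Bernstein--Beilinson--Mazur-type identification of the Serre functor in the parabolic, symmetric-pair setting:
\[
\Se_{\fg\mod_{-\chi}^{M_K\cdot N}} \simeq \Upsilon\circ \Upsilon^-\,[2\dim(X)-\dim(M_K)]. \tag{$\ast$}
\]
Thus the whole proposition reduces to establishing $(\ast)$, which is the direct analog of \thmref{t:BBM} for the subgroup $M_K\cdot N$ in place of $N$.

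The plan is to prove $(\ast)$ by transporting the computation of the Serre functor to the geometric side via Localization, in the same manner as the proof of \thmref{t:BBM} given at the end of Section~\ref{s:ex}. Since $M_K\cdot N\subset G$ is spherical (it is contained in the spherical subgroup $K$, and has finitely many orbits on $X$ because $P$ is a parabolic), \thmref{t:gH miraculous} applies to the category $\fg\mod_{-\chi}^{M_K\cdot N}$: it is proper, Serre and Gorenstein. Now apply \lemref{l:double right}(b) to the fully faithful proper functor
\[
\Loc:\fg\mod_{-\chi}^{M_K\cdot N}\hookrightarrow \Dmod_{-\lambda-2\rho}(M_K\cdot N\backslash X)
\]
(together with the dualizing twist of \secref{sss:loc and dual}) to obtain
\[
\Se_{\fg\mod_{-\chi}^{M_K\cdot N}}\simeq \Gamma\circ \Se_{\Dmod_{-\lambda-2\rho}(M_K\cdot N\backslash X)}\circ \Loc.
\]
Since $\Gamma$ and $\Loc$ commute with the averaging functors that define $\Upsilon$ and $\Upsilon^-$, matters reduce to the corresponding statement on the geometric side, namely
\[
\Se_{\Dmod_\mu(M_K\cdot N\backslash X)} \simeq \Upsilon\circ \Upsilon^-\,[2\dim(X)-\dim(M_K)]
\]
for any twisting $\mu$, which is exactly the content (or an immediate rerun of the proof) of \cite[Proposition 4.1.7]{Kim}, cited in the statement.

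The substantive step is thus the geometric identity on $\Dmod_\mu(M_K\cdot N\backslash X)$. The input to this identity, via \thmref{t:Y miraculous} applied to the stack $\CY=M_K\cdot N\backslash X$ (which has finitely many isomorphism classes of $k$-points by the sphericity of $M_K\cdot N$), is that the Serre functor is inverse to $\PsId_\CY$; then Kim's description of $\PsId_\CY$ as an intertwining composition provides the factor $\Upsilon\circ \Upsilon^-$, while the cohomological shift $2\dim(X)-\dim(M_K)$ appears by comparing dualizing complexes on $\CY$ and on the open $M_K\cdot N$-orbit (whose codimension one computes as $\dim(X)-\dim(N)$, and one further accounts for the reductive $M_K$-direction, on which $\Se_{\Rep(M_K)}\simeq \on{Id}$ as noted in the examples). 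The main obstacle in writing the proof in full is precisely this dimension bookkeeping: keeping track of the shifts coming from (i) the twist in \secref{sss:loc and dual} when passing through $\Loc$, (ii) the dualizing complex of the relevant orbit stratification, and (iii) the trivial Serre functor in the reductive $M_K$-direction, so that the shifts collected from the three sources combine to exactly $2\dim(X)-\dim(M_K)$. Once this combinatorial check is performed, all remaining comparisons are formal, being already packaged in \thmref{t:gH miraculous}, \lemref{l:double right}, and the compatibility of $\Loc/\Gamma$ with averaging functors.
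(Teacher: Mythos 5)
Your reduction of the proposition to the identity
\[
\Se_{\fg\mod_{-\chi}^{M_K\cdot N}} \simeq \Upsilon\circ \Upsilon^-\,[2\dim(X)-\dim(M_K)],
\]
and then the plan to transport that identity through $(\Loc,\Gamma)$ to the D-module side and invoke \cite[Proposition 4.1.7]{Kim}, is precisely the route the paper takes (the paper's own proof consists of the single sentence ``as in \corref{c:prel AG}, using \cite[Proposition 4.1.7]{Kim}'', where \corref{c:prel AG} is the $N$-analog derived from \thmref{t:BBM}; the $M_K\cdot N$ analog is obtained by substituting \cite[Proposition 4.1.7]{Kim} for \cite[Theorem 3.4.2]{Kim} in the proof of \thmref{t:BBM}). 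So the core of the argument is right and follows the paper.

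Two of your side justifications are incorrect, though neither affects the reduction itself. First, the subgroup $M_K\cdot N$ is \emph{not} contained in $K$: the unipotent radical $N$ of the $\theta$-minimal parabolic $P$ is certainly not $\theta$-fixed (indeed $\theta(N)=N^-$), so your parenthetical ``it is contained in the spherical subgroup $K$'' is false. The sphericity of $M_K\cdot N$ is a genuine fact, but it requires a different justification (e.g.\ via the finiteness of $M_K$-orbits on the partial flag variety of $M$ together with the Bruhat decomposition of $P$-orbits on $X$), and in any case it is implicitly invoked by \thmref{t:gH miraculous}. Second, the heuristic you offer for the shift $[2\dim(X)-\dim(M_K)]$ via ``the codimension of the open $M_K\cdot N$-orbit, computed as $\dim(X)-\dim(N)$'' does not parse — the open orbit has codimension zero, and the Serre shift is not a pointwise quantity determined by a single stratum — so that paragraph, had it been carried out, would not have produced the correct number. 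Fortunately this computation is not needed: the shift is part of the content of \cite[Proposition 4.1.7]{Kim}, which you already cite as the input, so the proof goes through once one drops the attempted bookkeeping.
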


\sssec{}

We define a contravariant functor
\begin{equation}\label{e:contrmixed}
(\fg\mod_\chi^{M_K\cdot N})^{\heartsuit}\to ((\fg\mod_\chi^{M_K\cdot N^-})^{\heartsuit})^{\on{op}}, \quad \CM\mapsto \CM^\vee
\end{equation}
by sending $\CM$ to the subspace of the abstract dual $\CM^*$, consisting of $M_K \cdot N^-$-finite vectors.

\sssec{}

For $\CM \in (\fg\mod_\chi^{M_K\cdot N})^{\heartsuit, \on{f.g.}}$, one has the following concrete description of $\CM^{\vee}$. 

\medskip

Let $\fm$ denote the Lie algebra of the Levi subgroup $M=P\cap P^-$. 
Let $\fa\subset \fm$ denote the $\theta$-split part of the center of $\fm$, so that $\fm\simeq \fm_K\oplus \fa$. 
For $\CM\in (\fg\mod_\chi^{M_K\cdot N})^{\heartsuit,\on{f.g.}}$, the action
of $\fa$ on $\CM$ is locally finite. Hence, we can write
$$\CM \simeq \underset{\mu}\oplus\, \CM_{(\mu)},$$
where the $\CM_{(\mu)}$ are the generalized eigenspaces for the action of $\fa$. 

\medskip

Now, each $\CM_{(\mu)}$ is \emph{admissible} as a $(\fm,K_M)$-module, and let
$(\CM_{(\mu)})^\vee$ denote its dual (taken in the sense of \secref{sss:dual gK}). 
We then have
$$\CM^\vee:=\underset{\mu}\oplus\, (\CM_{(\mu)})^\vee,$$
with the natural action of $\fg$. 

\medskip

Again it is possible to show that the functor $\CM \mapsto \CM^{\vee}$ restricts to a contravariant equivalence 
$$(\fg\mod_\chi^{M_K\cdot N})^{\heartsuit, \on{f.g.}} \simeq (\fg\mod_\chi^{M_K\cdot N^-})^{\heartsuit, \on{f.g.}}.$$

\sssec{}

We claim:

\begin{thm}  \label{t:descr contr par}
The ind-extension of the contravariant equivalence $\BD_{\fg,\chi}^{\on{contr}}$ of \eqref{e:contr par}
$$\BD_{\fg,\chi}^{\on{contr}}:\fg\mod_\chi^{M_K\cdot N}\to (\fg\mod_{-\chi}^{M_K\cdot N^-})^{\on{op}}$$
is t-exact when restricted to $(\fg\mod_\chi^{M_K\cdot N})^{\on{f.g.}}$.
The corresponding contravariant functor 
$$(\fg\mod_\chi^{M_K\cdot N})^{\heartsuit, \on{f.g.}} \to ((\fg\mod_{-\chi}^{M_K\cdot N^-})^\heartsuit)^{\on{op}}$$
is given by the functor $\CM\mapsto \CM^\vee$ of \eqref{e:contrmixed}. 
\end{thm}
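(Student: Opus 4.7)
The plan is to adapt the proof of \thmref{t:descr contr gK} to the parabolic setting, incorporating the $\Upsilon$-flip device from the proof of \thmref{t:descr contr}. First, to establish the t-exactness of the ind-extension, I would take as compact generators of $\fg\mod_{-\chi}^{M_K\cdot N^-}$ the parabolic-Verma-like objects
$$P^-_{\rho,-\chi}:=U(\fg)_{-\chi}\underset{U(\fm_K+\fn^-)}\otimes \rho,$$
indexed by irreducible $M_K$-representations $\rho$, with $\fn^-$ acting by zero on $\rho$; these generate $\fg\mod_{-\chi}^{M_K\cdot N^-}$ by the usual induction-restriction adjunction together with local $\fn^-$-nilpotency. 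Using \propref{p:parab AG}, the characterizing pairing
$$\CHom_{\fg\mod_{-\chi}^{M_K\cdot N^-}}(\CM',\BD_{\fg,\chi}^{\on{contr}}(\CM))$$
equals, up to a cohomological shift and a tensor with $\ell_{M_K}$, the dual of $\langle \CM,\Upsilon(\CM')\rangle_{\fg,\chi,M_K\cdot N}$, exactly as in the opening move of the proof of \thmref{t:descr contr}, Step 1.

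Next, I would compute the functor $\CM\mapsto \langle \CM,\Upsilon(P^-_{\rho,-\chi})\rangle_{\fg,\chi,M_K\cdot N}$. Unwinding $\Upsilon=\Av^N_*\circ\oblv_{N^-}$, and using the top-form comparison map from \secref{sss:from top} applied to $M_K$ (so as to trade $M_K$-equivariance for a tensor with $\ell_{M_K}^{-1}$ at the level of $H^0$), this pairing is related to the plain $\fg$-pairing of $\CM$ with $\Upsilon(P^-_{\rho,-\chi})$. Combined with \propref{p:fin dim dual} in the $M_K$-direction, the result is expressed as a functor of the form
$$\CM\mapsto \CHom_{\Rep(M_K)}\bigl(\rho^{\vee},\,\on{C}^\bullet(\fn^-,\CM)\bigr),$$
which is t-exact; this yields the desired t-exactness of the ind-extension of $\BD_{\fg,\chi}^{\on{contr}}$ on $(\fg\mod_\chi^{M_K\cdot N})^{\on{f.g.}}$.

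Finally, to identify $H^0\bigl(\BD_{\fg,\chi}^{\on{contr}}(\CM)\bigr)$ with $\CM^\vee$ for $\CM \in (\fg\mod_\chi^{M_K\cdot N})^{\heartsuit,\on{f.g.}}$, I would use the generalized $\fa$-eigenspace decomposition $\CM\simeq \underset{\mu}\oplus\, \CM_{(\mu)}$: each $\CM_{(\mu)}$ is admissible as an $(\fm,M_K)$-module, and on each such piece the parabolic contragredient duality is compatible with the pure Harish-Chandra contragredient duality for the Levi $M$ and its maximal compact $M_K$. Applying \thmref{t:descr contr gK} to the pair $(\fm,M_K)$ in each $\fa$-isotypic component and summing over $\mu$ then yields the identification with $\CM^\vee$ as defined in \eqref{e:contrmixed}.

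The main obstacle is the middle step: one must carefully track the cohomological shifts coming from $\Upsilon^-[2\dim(X)-\dim(M_K)]$ together with the $\ell_{M_K}$-twist so that they combine cleanly, and verify that the resulting pairing with the generators $P^-_{\rho,-\chi}$ lands in a single cohomological degree when $\CM$ is in the heart. Once this is pinned down, the $\fa$-decomposition argument reduces everything to the $(\fm,M_K)$-case already treated in \thmref{t:descr contr gK}.
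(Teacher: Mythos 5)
Your overall plan — adapt \thmref{t:descr contr gK} to this setting while incorporating the $\Upsilon$-twist from \thmref{t:descr contr} — is exactly the combination the paper suggests (it omits the proof with precisely this hint). However, the key middle step as you have written it does not work.

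The problem is the claim that the functor $\CM\mapsto \CHom_{\Rep(M_K)}\bigl(\rho^{\vee},\,\on{C}^\bullet(\fn^-,\CM)\bigr)$ is t-exact. It is not: $\fn^-$ is nilpotent (not reductive), so $\on{C}^\bullet(\fn^-,-)$ (or $\on{C}_\bullet(\fn^-,-)$, whichever sign convention one lands on) is a genuinely derived functor with cohomology in multiple degrees for general $\CM\in(\fg\mod_\chi^{M_K\cdot N})^{\heartsuit,\on{f.g.}}$; for instance, if $\CM$ is simple and small, $\on{C}_\bullet(\fn^-,\CM)$ spreads over $\dim\fn^-+1$ degrees. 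This is exactly the feature that makes the category-$\CO$ proof (\thmref{t:descr contr}) substantially longer than the Harish-Chandra one: in the proof of \thmref{t:descr contr gK}, the test functor $\CHom_{\Rep(K)}(\rho^*,-)$ is t-exact because $K$ is reductive, and that shortcut simply vanishes once a unipotent group enters the equivariance. In fact, the t-exactness of $\BD_{\fg,\chi}^{\on{contr}}$ on f.g.\ objects is true only because the end result $\CM^{\vee}$ happens to be a very special module (free-like over $U(\fn^-)$), which one discovers only at the end of the argument — one cannot read it off from the pairing functor on a general $\CM$.

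The correct way to combine the two proofs is therefore to split the equivariance: trade the $M_K$-equivariance for an $\ell_{M_K}$-twist using \secref{sss:from top} and \propref{p:fin dim dual} (as you do), and then handle the $N$/$N^-$ part by the \emph{generator-by-generator} argument of \thmref{t:descr contr}. Concretely: show right t-exactness together with $H^0\simeq\CM^{\vee}$ for all $\CM$ (Step 1 style); then surject $\CM$ from parabolic Vermas of the form $U(\fg)_{\chi}\underset{U(\fm_K\oplus\fn)}\otimes(\rho\otimes U(\fn)_i)$ and reduce to these generators (Step 2 style); finally, compute the pairing between these and their $N^-$-counterparts explicitly, where degree-$0$ concentration follows from the freeness of $U(\fg)$ over $U(\fn^-)\otimes U(\fm)\otimes U(\fn)\otimes Z(\fg)$, together with the fact that $\CM^{\vee}$ is a direct sum of copies of $R_{N^-}$-type objects as an $\fn^-$-module (Step 3 style). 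Your closing $\fa$-eigenspace reduction to the $(\fm,M_K)$-case for identifying $H^0$ with $\CM^{\vee}$ is plausible, but as stated it needs care: the pairing one starts from is for $(\fg,M_K\cdot N)$, not $(\fm,M_K)$, so one must explain how the $\fn^-$-coinvariants step commutes with the $\fa$-eigenspace decomposition before invoking \thmref{t:descr contr gK} on the Levi.
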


We omit the proof as it is obtained by combining the ideas in the proofs of 
Theorems \ref{t:descr contr} and \ref{t:descr contr gK}. 

\section{Relation to the ``2nd adjointness" conjecture}  \label{s:2nd adj}

In this section we recall the ``2nd adjointness" conjecture of \cite{Kim} and relate it to \thmref{t:descr contr gK}. The notation is as in \secref{ss:dualmixed}.

\ssec{The principal series functors}

\sssec{}

Consider the categories $\fg\mod_\chi^{M_K\cdot N}$, $\fg\mod_\chi^{M_K\cdot N^-}$, $\fg\mod_\chi^K$.
As was shown in \cite[Theorem 4.2.2]{Kim}, the partially defined functors
$$\Av^N_!:\fg\mod^{M_K}_\chi\to \fg\mod_\chi^{M_K\cdot N} \text{ and } 
\Av^{N^-}_!:\fg\mod^{M_K}_\chi\to \fg\mod_\chi^{M_K\cdot N^-}$$
are defined on the essential image of $\oblv_{K/M_K}$. 

\medskip

In particular, we have an adjoint pair
$$\Av^N_!\circ \oblv_{K/M_K}:\fg\mod_\chi^K\rightleftarrows \fg\mod_\chi^{M_K\cdot N}:\Av^{K/M_K}_*\circ \oblv_N.$$

\medskip

In addition, we have:

\begin{prop} \hfill

\smallskip

\noindent{\em(a)}
For $\bC$ being either $\fg\mod_\chi$ or $\Dmod_\lambda(X)$, 
the partially defined functor 
$$\Av^{K/M_K}_!:\bC^{M_K}\to \bC^K$$
is defined on the essential image of $\oblv_N:\bC^{M_K\cdot N}\to \bC^{M_K}$. 

\smallskip

\noindent{\em(b)}
Moreover, we have
$$\Av^{K/M_K}_!\simeq \Gamma \circ \Av^{K/M_K}_!\circ \Loc.$$

\end{prop}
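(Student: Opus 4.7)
The strategy is to establish part (a) first in the D-module case $\bC=\Dmod_\lambda(X)$, then deduce part (b) formally from the $G$-equivariance of the $(\Loc,\Gamma)$ adjunction, and finally obtain part (a) for $\bC=\fg\mod_\chi$ as a consequence of (b) combined with the D-module case.

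For the D-module case, we identify $\bC^H=\Dmod_\lambda(H\backslash X)$ for $H\subset G$. Under this identification, $\oblv_{K/M_K}$ is $!$-pullback along the smooth morphism $p:M_K\backslash X\to K\backslash X$, whose fibers are copies of $K/M_K$, and its (partial) left adjoint $\Av^{K/M_K}_!$ is $p_!$. Since $K/M_K$ is non-proper in general, $p_!$ is only defined on objects whose support makes the restriction of $p$ of finite type. One way to conclude is a direct orbit analysis: the support of $\oblv_N\tilde\CF$ for $\tilde\CF\in \Dmod_\lambda((M_K\cdot N)\backslash X)$ is a union of $(M_K\cdot N)$-invariant closed subsets, and the interaction with the $K$-orbit structure on $X$ (Matsuki-type finiteness) ensures that $p$ restricted to this support is of finite type, so $p_!$ converges. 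A cleaner alternative is a Verdier-duality shortcut: combining \propref{p:parab AG} with the formalism of \secref{ss:dualmixed} one relates $\Av^{K/M_K}_!$ on the image of $\oblv_N$ to $\Av^{K/M_K}_*$ on the image of $\oblv_{N^-}$ via the long intertwining functor $\Upsilon$, and $\Av^{K/M_K}_*$ is defined everywhere as the continuous right adjoint of $\oblv_{K/M_K}$.

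For part (b), the key input is that $\Loc$ and $\Gamma$ are $G$-equivariant, hence commute with $\oblv_H$ for any subgroup $H\subset G$, and by passage to left adjoints $\Loc$ commutes with $\Av^H_!$ on the domain of definition. Combined with the identity $\Gamma\circ\Loc\simeq\Id$, which follows from the fully-faithfulness of $\Loc$, we obtain
$$\Gamma\circ\Av^{K/M_K}_!\circ\Loc\simeq \Gamma\circ\Loc\circ\Av^{K/M_K}_!\simeq \Av^{K/M_K}_!$$
as endofunctors on the essential image of $\oblv_N$, which is part (b). Moreover this same chain shows that the outer $\Av^{K/M_K}_!$ in the $\fg$-module setting is defined whenever the middle, D-module, $\Av^{K/M_K}_!$ is; since $\Loc$ is $G$-equivariant it sends the essential image of $\oblv_N$ on the $\fg$-module side into the essential image of $\oblv_N$ on the D-module side, so part (a) for $\bC=\fg\mod_\chi$ follows from part (a) for $\bC=\Dmod_\lambda(X)$.

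The main obstacle is the geometric step in the D-module case. The orbit-by-orbit analysis requires a careful understanding of how $K$-orbits and $(M_K\cdot N)$-orbits on $X$ interact, while the Verdier-duality shortcut hinges on a precise identification with $\Upsilon$ as in \propref{p:parab AG}. A secondary subtlety, relevant when $\chi$ is irregular, is that $\Loc$ need not be an equivalence; in that case one can replace $(\Loc,\Gamma)$ by $(\Loc',\Gamma')$ of \secref{sss:fix Loc'} and argue with $\fg\mod_{\{\chi\}}$ throughout, without affecting the overall structure of the argument.
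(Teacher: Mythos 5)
Your argument for the D-module case is phrased imprecisely and does not identify the correct mechanism. You write that $p_!$ is ``only defined on objects whose support makes the restriction of $p$ of finite type''; but $p:M_K\backslash X\to K\backslash X$ is a morphism of finite-type stacks, hence of finite type after restriction to any closed substack -- that is never the issue. The genuine obstruction is that for a non-proper map $p$ the functor $p_!$ is a priori only a pro-object, and what makes it converge is \emph{holonomicity}. The paper's argument is exactly this and is much cleaner than either of your two alternatives: since $M_K\cdot N$ acts on $X$ with finitely many orbits, every object of $\Dmod_\lambda(X)^{M_K\cdot N}$ is holonomic, and for holonomic D-modules the partial left adjoint $\Av^{K/M_K}_!$ (i.e.\ $!$-pushforward along a non-proper smooth map) is always defined. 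Your second, Verdier-duality-via-$\Upsilon$ route is not spelled out; \propref{p:parab AG} concerns an identification of duality functors, and it is not clear how it would yield well-definedness of $\Av^{K/M_K}_!$ without some extra argument.

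The deduction of part (b) and of part (a) for $\fg\mod_\chi$ has a genuine gap. The step ``by passage to left adjoints $\Loc$ commutes with $\Av^H_!$ on the domain of definition'' is not a formal consequence of $G$-equivariance of $\Loc$ and $\Gamma$. First, it presupposes that $\Av^{K/M_K}_!$ on the $\fg$-module side is already defined, which is what you are trying to establish, so the chain
\[
\Gamma\circ\Av^{K/M_K}_!\circ\Loc\simeq \Gamma\circ\Loc\circ\Av^{K/M_K}_!\simeq \Av^{K/M_K}_!
\]
is circular as a proof of part (a) for $\fg\mod_\chi$. Second, even granting existence, the commutation $\Av^{K/M_K}_!\circ\Loc\simeq\Loc\circ\Av^{K/M_K}_!$ is not formal: one would need to know that $\Av^{K/M_K}_!\circ\Loc$ lands in the essential image of the (non-essentially-surjective) functor $\Loc$, or equivalently that the cofiber of the counit $\Loc\circ\Gamma\to\Id$ applied to $\Av^{K/M_K}_!\circ\Loc(\CM)$ receives no maps from objects in the image of $\Loc$; this has no reason to hold in general and is a nontrivial compatibility of localization with $!$-averaging. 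The paper sidesteps this entirely by citing \cite[Proposition 1.2.6]{Kim}, which is where the real content of (b) lies; since that reference is to an in-preparation paper you could not have seen it, but the point is that (b) is not a formal consequence of the $(\Loc,\Gamma)$-adjunction.
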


\begin{proof}
The functor $\Av^{K/M_K}_!:\Dmod_\lambda(X)^{M_K}\to \Dmod_\lambda(X)^K$ is defined
on the essential image of $\oblv_N:\Dmod_\lambda(X)^{M_K\cdot N}\to  \Dmod_\lambda(X)^{M_K}$
because $M_K\cdot N$ has finitely many orbits on $X$, and hence every object from 
$\Dmod_\lambda(X)^{M_K\cdot N}$ is holonomic. 

\medskip

The assertion concerning $\fg\mod_\chi$, as well as point (b) of the proposition follow from
\cite[Proposition 1.2.6]{Kim}.

\end{proof} 

In particular, we obtain another pair of adjoint functors
$$\Av^{K/M_K}_!\circ \oblv_N:\fg\mod_\chi^{M_K\cdot N}\rightleftarrows \fg\mod_\chi^K:\Av^N_*\circ \oblv_{K/M_K}.$$

\begin{rem}
We regard the functors
$$\Av^{K/M_K}_!\circ \oblv_N \text{ and } \Av^{K/M_K}_*\circ \oblv_N,$$
which map $\fg\mod_\chi^{M_K\cdot N}\to \fg\mod_\chi^K$, 
as two versions of the principal series functor for Harish-Chandra modules. 
\end{rem} 

\sssec{}

We now claim:

\begin{prop}   \label{p:Av ! and *}
There exists a canonical isomorphism of functors
$$\Av^{K/M_K}_!\simeq \PsId_{\fg\mod_\chi^K}\circ \Av^{K/M_K}_*[2\dim(X)-\dim(M_K)], \quad 
\fg\mod_\chi^{M_K\cdot N}\rightrightarrows \fg\mod_\chi^K.$$
\end{prop}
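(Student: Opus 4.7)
The strategy is to transform the desired isomorphism via \thmref{t:gH miraculous}, apply \lemref{l:double right}(a), and combine with \propref{p:parab AG}.

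By \thmref{t:gH miraculous}(a), since $K$ is spherical in $G$, the endofunctors $\Se_{\fg\mod_\chi^K}$ and $\PsId_{\fg\mod_\chi^K}$ are mutually inverse equivalences. Hence the claim is equivalent to producing an isomorphism
$$\Se_{\fg\mod_\chi^K}\circ \Av^{K/M_K}_! \simeq \Av^{K/M_K}_*[2\dim(X)-\dim(M_K)]$$
of functors $\fg\mod_\chi^{M_K\cdot N}\to \fg\mod_\chi^K$, where the $\Av$-functors are understood as composed with $\oblv_N$.

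First, I would set $F:=\Av^{K/M_K}_!\circ\oblv_N$ and verify that $F$ is proper, by arguments parallel to those in the proof of \thmref{t:gH miraculous}: both $\Av^{K/M_K}_!$ (defined on the essential image of $\oblv_N$, where everything is holonomic by the spherical property) and $\oblv_N$ preserve compactness. The right adjoint decomposes as $F^R \simeq \Av^N_*\circ\oblv_{K/M_K}$. Applying \lemref{l:double right}(a) then gives
$$\Se_{\fg\mod_\chi^K}\circ F \simeq (F^R)^R \circ \Se_{\fg\mod_\chi^{M_K\cdot N}}.$$
Next, by comparing the two descriptions of the identification $(\fg\mod_\chi^{M_K\cdot N})^\vee\simeq \fg\mod_{-\chi}^{M_K\cdot N^-}$ in \eqref{e:to be contr par} and \propref{p:parab AG}, one reads off
$$\Se_{\fg\mod_\chi^{M_K\cdot N}} \simeq \Upsilon\circ \Upsilon^-[2\dim(X)-\dim(M_K)].$$
Hence it remains to establish
$$(F^R)^R \circ \Upsilon\circ\Upsilon^- \simeq \Av^{K/M_K}_*\circ\oblv_N.$$

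The main obstacle is this last identification. Since $(F^R)^R \simeq \Av^{K/M_K}_*\circ (\Av^N_*)^R$, the task reduces to controlling the double right adjoint of the unipotent averaging $\Av^N_*$. In the unipotent setting, the Poincar\'e-duality relation $\Av^N_*\simeq \Av^N_!\otimes \ell_N^{-1}$ for Lie algebra (co)homology gives $(\Av^N_*)^R \simeq \oblv_N\otimes \ell_N$ (up to sign conventions). One then expands $\Upsilon\circ\Upsilon^- = \Av^N_*\circ\oblv_{N^-}\circ\Av^{N^-}_*\circ\oblv_N$ and shows that the Poincar\'e-duality twist by $\ell_N$ combines with the successive derived (co)invariance operations against $\fn$ and $\fn^-$ to collapse to the bare forgetful $\oblv_N$, with all intermediate determinant lines (attached to $\fn$, $\fn^-$, and $\fm_K$) cancelling out. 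This cancellation is the technical heart of the proof; once carried out, the composite is exactly $\Av^{K/M_K}_*\circ\oblv_N$, which produces the desired shift after reassembling with \propref{p:parab AG}.
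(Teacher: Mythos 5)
Your opening reduction — invoking \thmref{t:gH miraculous}(a) to trade $\PsId_{\fg\mod_\chi^K}$ for $\Se_{\fg\mod_\chi^K}$ and reducing to $\Se_{\fg\mod_\chi^K}\circ \Av^{K/M_K}_!\simeq \Av^{K/M_K}_*[2\dim X-\dim M_K]$ — is the same as the paper's. But from there the paper proceeds differently: it \emph{imports} the $\Dmod_\lambda(X)$-level isomorphism $\Av^{K/M_K}_!\simeq \PsId_{K\backslash X}\circ \Av^{K/M_K}_*[2\dim X-\dim M_K]$ from \cite[Corollary 4.4.2]{Kim}, converts $\PsId_{K\backslash X}$ to $\Se_{\Dmod_\lambda(X)^K}$ via \thmref{t:Y miraculous}, and transports the whole identity to $\fg\mod_\chi$ through the $(\Loc,\Gamma)$-adjunction using \lemref{l:double right}(b) and \cite[Lemma 1.2.6]{Kim}. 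Your route — decompose $F=\Av^{K/M_K}_!\circ\oblv_N$, apply \lemref{l:double right}(a) to get $(F^R)^R\circ\Se_{\fg\mod_\chi^{M_K\cdot N}}$, and compute $\Se_{\fg\mod_\chi^{M_K\cdot N}}\simeq\Upsilon\circ\Upsilon^-[\,\cdot\,]$ from \propref{p:parab AG} — is genuinely different, and the bookkeeping up to that point is correct.

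The gap is the final identity $(F^R)^R\circ\Upsilon\circ\Upsilon^-\simeq\Av^{K/M_K}_*\circ\oblv_N$. You acknowledge this is ``the technical heart'' but you do not prove it, and the ingredients you gesture at are not sufficient. Even granting $(\Av^N_*)^R\simeq \oblv_N$ up to a twist, what you then need is
$$\Av^{K/M_K}_*\circ\oblv_N\circ\Av^N_*\circ\oblv_{N^-}\circ\Av^{N^-}_*\circ\oblv_N\;\simeq\;\Av^{K/M_K}_*\circ\oblv_N$$
(up to twist and shift), and the inner string $\oblv_N\circ\Av^N_*\circ\oblv_{N^-}\circ\Av^{N^-}_*\circ\oblv_N$ is far from equal to $\oblv_N$; that it becomes so after applying $\Av^{K/M_K}_*$ is precisely the substantive geometric input -- essentially a reformulation of \cite[Corollary 4.4.2]{Kim} -- so your approach has relocated the difficulty rather than removed it. Two subsidiary concerns: the Poincar\'e-duality relation you cite, $\Av^N_*\simeq\Av^N_!\otimes\ell_N^{-1}$, is not dimensionally consistent (the shift for averaging along a unipotent group is $2\dim N$, whereas $\ell_N$ carries only $\dim N$); and before one can speak of $(\Av^N_*)^R$ at all, one must check that $\Av^N_*$ preserves compactness, which is not verified.
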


\begin{proof}

The point of departure is the isomorphism 
$$\Av^{K/M_K}_!\simeq \PsId_{K\backslash X}\circ \Av^{K/M_K}_*[2\dim(X)-\dim(M_K)]$$
as functors 
$$\Dmod_\lambda(X)^{M_K\cdot N}\rightrightarrows \Dmod_\lambda(X)^K$$
that was established in \cite[Corollary 4.4.2]{Kim}.

\medskip

Using \thmref{t:Y miraculous}, we rewrite it as an isomorphism 
$$\Se_{\Dmod_\lambda(X)^K}\circ \Av^{K/M_K}_!\simeq \Av^{K/M_K}_*[2\dim(X)-\dim(M_K)].$$

\medskip

Composing both sides with $\Gamma$ and precomposing with $\Loc$, we obtain an isomorphism between
the functor
$$\Av^{K/M_K}_*[2\dim(X)-\dim(M_K)]: \Dmod_\lambda(X)^{M_K\cdot N}\rightrightarrows \Dmod_\lambda(X)^K$$
and 

$$\Gamma \circ \Se_{\Dmod_\lambda(X)^K} \circ \Av^{K/M_K}_!\circ \Loc\simeq 
\Gamma \circ \Se_{\Dmod_\lambda(X)^K} \circ \Loc  \circ \Av^{K/M_K}_! \simeq 
\Se_{\fg\mod_\chi^K} \circ \Av^{K/M_K}_!,$$
where the first isomorphism is due to  \cite[Lemma 1.2.6]{Kim}, and the second one is using \lemref{l:double right}(b).

\medskip

Thus, we obtain an isomorphism
$$\Se_{\fg\mod_\chi^K}\circ \Av^{K/M_K}_!\simeq \Av^{K/M_K}_*[2\dim(X)-\dim(M_K)].$$

Applying \thmref{t:gH miraculous}(a), we arrive at the desired isomorphism 
$$\Av^{K/M_K}_!\simeq \PsId_{\fg\mod_\chi^K}\circ \Av^{K/M_K}_*[2\dim(X)-\dim(M_K)].$$

\end{proof}

\ssec{The ``2nd adjointness" conjecture}

\sssec{}

Let us recall the ``2nd adjointness" conjecture of \cite[Conjecture 4.4.5]{Kim}:

\begin{conj}  \label{c:2nd adj}
The functors
$$\Av^{K/M_K}_* \text{ and } (-\otimes \ell_{K/M_K}^{-1})\circ \Av^{K/M_K}_!\circ \Upsilon,\quad 
\fg\mod_\chi^{M_K\cdot N^-}\rightrightarrows \fg\mod_\chi^K$$
are canonically isomorphic, where $\ell_{K/M_K}:=\ell_K\otimes \ell^{-1}_{M_K}$.
\end{conj}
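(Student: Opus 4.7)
The plan is to reduce Conjecture~\ref{c:2nd adj} to an equivalent statement about the commutation of the contragredient duality $\BD_{\fg,\chi}^{\on{contr}}$ with the Casselman--Jacquet functors attached to the two opposite $\theta$-compatible parabolics. This reformulation is the form in which the result was established classically, for $k=\BC$ and at the level of abelian categories, by Casselman, Milicic, and later Hecht--Schmid.

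First, I would invoke \propref{p:Av ! and *}, applied to $\Upsilon(\CM)\in\fg\mod_\chi^{M_K\cdot N}$ for $\CM\in\fg\mod_\chi^{M_K\cdot N^-}$, to rewrite the right-hand side of the conjecture as $\ell_{K/M_K}^{-1}\otimes\PsId_{\fg\mod_\chi^K}\circ\Av^{K/M_K}_*\circ\Upsilon\,[2\dim(X)-\dim(M_K)]$. This converts Conjecture~\ref{c:2nd adj} into the assertion that the square
$$
\CD
\fg\mod_\chi^K  @<{\Av^{K/M_K}_*}<<  \fg\mod_\chi^{M_K\cdot N} \\
@V{\PsId_{\fg\mod_\chi^K}}VV  @VV{\Av^{N^-}_!}V \\
\fg\mod_\chi^K  @<{\Av^{K/M_K}_*}<<  \fg\mod_\chi^{M_K\cdot N^-}
\endCD
$$
commutes up to the prescribed determinant-line twist and cohomological shift, after identifying the right vertical arrow $\Upsilon$ with $\Av^N_*\circ\oblv_{N^-}$ (and observing, via \thmref{t:BBM}, that this differs from $\Av^{N^-}_!$ only by a computable shift and twist).

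Next, I would use \thmref{t:descr contr gK}, in the form of the identity~\eqref{e:Drinf Serre}, to replace $\PsId_{\fg\mod_\chi^K}$ with $\ell_K^{-1}\otimes\BD_{\fg,-\chi}^{\on{can}}\circ\BD_{\fg,\chi}^{\on{contr}}$. Combined with the compatibility of the canonical duality $\BD^{\on{can}}$ with the averaging functors built into the duality identifications \eqref{e:duality g H chi} and \eqref{e:duality g H}, and with the adjunction $(\Av^N_!\circ\oblv_{K/M_K},\,\Av^{K/M_K}_*\circ\oblv_N)$ of \secref{s:2nd adj}, the commutativity of the above square can be repackaged as the isomorphism
$$\BD_{\fg,\chi}^{\on{contr}}\circ J\;\simeq\; J^-\circ\BD_{\fg,\chi}^{\on{contr}},\qquad(\fg\mod_\chi^K)^c\rightrightarrows(\fg\mod_{-\chi}^{M_K\cdot N})^c,$$
where $J=\Av^{N^-}_*\circ\Av^N_!$ and $J^-=\Av^N_*\circ\Av^{N^-}_!$.

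The main obstacle is precisely this last isomorphism. On the hearts of the t-structures, for $k=\BC$, it is the classical theorem of Casselman--Milicic--Hecht--Schmid describing how contragredient duality interchanges the asymptotic expansions of matrix coefficients along $P$ and $P^-$. However, all presently known proofs proceed analytically, via estimates on matrix coefficients, and produce the comparison only as an isomorphism of functors between abelian categories. Upgrading it to a canonical isomorphism of functors between the DG categories $\fg\mod_\chi^K$ and $\fg\mod_{-\chi}^{M_K\cdot N}$, in a purely algebraic manner valid over an arbitrary algebraically closed field of characteristic zero, is the substantive obstruction that keeps the conjecture open. A conceivable attack is to construct the comparison map canonically from the wonderful compactification $\ol{G}$, in the spirit of \cite{BBK}, \cite{Ga2}, and \cite{Wa}; but this appears to require extending the Casselman--Jacquet formalism of \cite{Kim} from minimal to arbitrary $\theta$-compatible parabolics, which is itself one of the open directions flagged in the introduction.
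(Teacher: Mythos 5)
The statement you were asked to ``prove'' is labeled a \emph{conjecture} in the paper, and indeed the paper offers no proof: it only shows the conjecture equivalent to several reformulations (Conjectures~\ref{c:funct eq}, \ref{c:J}, \ref{c:CT}, \ref{c:Jacquet duality}), then observes that the last one recovers, at the level of abelian categories and for $k=\BC$, the Casselman--Milicic--Hecht--Schmid theorem. You correctly recognize this status and essentially reconstruct the paper's own chain: apply \propref{p:Av ! and *} to convert the conjecture into the commuting square of \conjref{c:funct eq}; use \eqref{e:Drinf Serre} and the compatibilities of the duality identifications with averaging to repackage it as the Jacquet-duality isomorphism $\BD^{\on{contr}}_{\fg,\chi}\circ J\simeq J^-\circ\BD^{\on{contr}}_{\fg,\chi}$; and note that this is known only analytically at the abelian level. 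So you take the same approach as the paper.

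One small imprecision: in going from the paper's square (whose right-hand arrow is $\Upsilon^{-1}[-2\dim(X)+\dim(M_K)]$) to yours (whose right-hand arrow is $\Av^{N^-}_!$), you say that $\Upsilon=\Av^N_*\circ\oblv_{N^-}$ ``differs from $\Av^{N^-}_!$ only by a shift and twist'' --- but these go in opposite directions; what you mean is that $\Upsilon^{-1}$ agrees with $\Av^{N^-}_!$ up to shift and twist, which is what \thmref{t:BBM} (in its parabolic form, via \propref{p:parab AG} and \cite[Theorem 4.1.7]{Kim}) actually delivers. This is a typographical slip rather than a mathematical error, and your identification of the genuine obstruction --- upgrading the abelian-level analytic isomorphism to a canonical isomorphism of DG functors over a general $k$ --- is exactly what the paper flags.
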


\begin{rem}
One should think of the line $\ell_{K/M_K}^{-1}$ as the top de Rham cohomology of $K/M_K$.
\end{rem}

\sssec{}

Using \propref{p:Av ! and *}, we can reformulate \conjref{c:2nd adj} as follows:

\begin{conj} \label{c:funct eq}
The following diagram of functors commutes:
$$
\CD
\fg\mod_\chi^K  @<{\Av^{K/M_K}_*}<< \fg\mod_\chi^{M_K\cdot N} \\
@V{(-\otimes \ell_{K/M_K}^{-1}) \circ \on{Ps-Id}_{\fg\mod_\chi^K}}VV   @VV{ \Upsilon^{-1}[-2\dim(X)+\dim(M_K)]}V  \\
\fg\mod_\chi^K  @<{\Av^{K/M_K}_*}<< \fg\mod_\chi^{M_K\cdot N^-}.
\endCD
$$
\end{conj}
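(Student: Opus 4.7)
The plan is to exhibit \conjref{c:funct eq} as a purely formal reformulation of \conjref{c:2nd adj}, using the unconditional isomorphism supplied by \propref{p:Av ! and *}. No new representation-theoretic input is required: the two conjectures are strictly equivalent, and the hard work is concentrated in \conjref{c:2nd adj}.

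I would begin with the isomorphism asserted in \conjref{c:2nd adj},
$$\Av^{K/M_K}_* \simeq (-\otimes \ell_{K/M_K}^{-1})\circ \Av^{K/M_K}_!\circ \Upsilon, \quad \fg\mod_\chi^{M_K\cdot N^-}\to \fg\mod_\chi^K,$$
and note that the inner $\Av^{K/M_K}_!$ is evaluated on objects in $\fg\mod_\chi^{M_K\cdot N}$, the target of $\Upsilon$. This is precisely the setting in which \propref{p:Av ! and *} applies, giving
$$\Av^{K/M_K}_!\simeq \PsId_{\fg\mod_\chi^K}\circ \Av^{K/M_K}_*[2\dim(X)-\dim(M_K)].$$
Substituting this identity into the previous display yields
$$\Av^{K/M_K}_* \simeq (-\otimes \ell_{K/M_K}^{-1})\circ \PsId_{\fg\mod_\chi^K}\circ \Av^{K/M_K}_*[2\dim(X)-\dim(M_K)]\circ \Upsilon.$$

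Next, I would precompose both sides with $\Upsilon^{-1}[-2\dim(X)+\dim(M_K)]$, viewed as a functor $\fg\mod_\chi^{M_K\cdot N}\to \fg\mod_\chi^{M_K\cdot N^-}$, whose existence in this form is supplied by the analogue of \thmref{t:BBM} in the $M_K$-equivariant setting. The composition $\Upsilon\circ \Upsilon^{-1}$ cancels and the two cohomological shifts add to zero, so one is left with
$$\Av^{K/M_K}_*\circ \Upsilon^{-1}[-2\dim(X)+\dim(M_K)] \simeq (-\otimes \ell_{K/M_K}^{-1})\circ \PsId_{\fg\mod_\chi^K}\circ \Av^{K/M_K}_*,$$
which is exactly the commutativity of the square in \conjref{c:funct eq}. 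The argument is reversible: composing the diagram with $\Upsilon$ on the right and reusing \propref{p:Av ! and *} in the opposite direction recovers \conjref{c:2nd adj}, so the two statements are indeed equivalent.

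The genuine obstacle is therefore not this bookkeeping, but the proof of \conjref{c:2nd adj} itself, i.e.\ the production of an adjunction of the form $\Av^{K/M_K}_*\dashv \Av^N_*\circ \Av^{N^-}_!$ (up to the appropriate determinant line), which is the $(\fg,K)$-module counterpart of Bernstein's second adjointness for $\fp$-adic groups. The merit of the reformulation is independent of the proof strategy: it places the conjecture in a form directly parallel to the commutative square \eqref{e:funct equation} from the geometric Langlands context, reinforcing the intended analogy between $\PsId_{\fg\mod_\chi^K}$ and the Deligne-Lusztig functor.
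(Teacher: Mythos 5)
Your argument is correct and matches the paper's own approach: the paper's text at this point simply asserts that \conjref{c:funct eq} is obtained from \conjref{c:2nd adj} by substituting \propref{p:Av ! and *} and rearranging, and you have spelled out exactly that substitution and the (reversible) precomposition with $\Upsilon^{-1}$ up to shift. One tiny quibble: the invertibility of $\Upsilon$ is a property of the intertwining functor in its own right (cf.\ \cite[Sect.\ 1.4]{Kim} and \propref{p:parab AG}), not something that needs to be extracted from an analogue of \thmref{t:BBM}; that reference is a mild misattribution but does not affect the argument.
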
 

\sssec{}

Yet another reformulation of  \conjref{c:2nd adj} is the following:

\begin{conj}  \label{c:J}
The functor \emph{right} adjoint to   
$$\Av^{K/M_K}_*\circ \oblv_{N^-}: \fg\mod_\chi^{M_K\cdot N^-}\to \fg\mod_\chi^K,$$
is given by
$$(-\otimes \ell_{K/M_K})\circ J\circ \oblv_{K/M_K},$$
\end{conj}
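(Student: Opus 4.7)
My plan is to prove Conjecture~\ref{c:J} by first rewriting its claim as the derived-level intertwining of contragredient duality with the Casselman--Jacquet functors, and then proving that intertwining via the theorem of Casselman--Milicic--Hecht--Schmid combined with the results of \S\ref{s:contr}. First, I would compute the right adjoint $F^R$ of $F := \Av^{K/M_K}_* \circ \oblv_{N^-}$ abstractly: the standard adjunctions $(\oblv_{K/M_K}, \Av^{K/M_K}_*)$ and $(\Av^{N^-}_!, \oblv_{N^-})$ give the left adjoint $F^L \simeq \Av^{N^-}_! \circ \oblv_{K/M_K}$, and since both $\fg\mod_\chi^K$ and $\fg\mod_\chi^{M_K\cdot N^-}$ are Serre by Theorem~\ref{t:gH miraculous}, Lemma~\ref{l:double right}(a) applied to $F^L$ yields
$$F^R \simeq \Se_{\fg\mod_\chi^{M_K\cdot N^-}} \circ F^L \circ \Se_{\fg\mod_\chi^K}^{-1}.$$

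Second, I would expand each Serre functor via its relation to contragredient duality. By the identity \eqref{e:Serre Drinf} of \S\ref{ss:gK} and its parabolic analogue from \S\ref{ss:dualmixed}, each $\Se$ can be written as $(-\otimes \ell^{-1}_\bullet) \circ \BD^{\on{contr}} \circ \BD^{\on{can}}$, up to a cohomological shift. Substituting these expressions, and using the routine commutations $\BD^{\on{can}} \circ \oblv_{K/M_K} \simeq \oblv_{K/M_K} \circ \BD^{\on{can}}$ and $\BD^{\on{can}} \circ \Av^{N^-}_! \simeq \Av^{N^-}_* \circ \BD^{\on{can}}$ (the latter coming from the duality swap of $!$ and $*$, up to a shift), the identity for $F^R$ reduces---after absorbing the advertised twist $\ell_{K/M_K}$---to
$$\BD^{\on{contr}}_{\fg,\chi} \circ J \simeq J^- \circ \BD^{\on{contr}}_{\fg,\chi}, \qquad (\fg\mod_\chi^K)^c \to (\fg\mod_{-\chi}^{M_K\cdot N})^c,$$
which is precisely the reformulation advertised at the end of the introduction.

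Third, I would verify this displayed commutation. By Theorem~\ref{t:descr contr gK}, $\BD^{\on{contr}}_{\fg,\chi}$ agrees on finitely generated modules with the classical contragredient duality, and the Casselman--Milicic--Hecht--Schmid theorem asserts precisely this commutation at the abelian level over $k = \BC$. Using the $t$-exactness of $\BD^{\on{contr}}$ on $(\fg\mod_\chi^K)^{\on{f.g.}}$ (Theorem~\ref{t:descr contr gK}) together with careful control of the $t$-amplitudes of $J$ and $J^-$ on standard modules, I would construct a natural transformation $\BD^{\on{contr}} \circ J \to J^- \circ \BD^{\on{contr}}$ that restricts on $(\fg\mod_\chi^K)^{\heartsuit,\on{f.g.}}$ to the CMHS isomorphism, and then extend it to all of $(\fg\mod_\chi^K)^c$ by compact generation via principal series and their standard analogues.

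The main obstacle will be this derived-level upgrade: CMHS is proved analytically via asymptotics of matrix coefficients and yields only an abelian-level isomorphism. One would either have to bootstrap it to a DG-level statement by tight $t$-amplitude control on both sides, or find a purely algebro-geometric proof of CMHS---for instance via the De~Concini--Procesi compactification $\overline{G}$---that also works over an arbitrary algebraically closed field of characteristic zero. The latter route would be conceptually more satisfying, integrating the result into the wonderful-compactification picture outlined in the introduction, but it likely demands substantial new input about $\overline{G}$-orbits on $K\backslash X \times K\backslash X$.
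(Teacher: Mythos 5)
The statement you were asked to prove is \conjref{c:J}, which the paper itself presents as an open \emph{conjecture} --- one of several equivalent reformulations of the ``2nd adjointness'' conjecture of \cite{Kim} --- and does not prove. Your Steps 1--2 reproduce, in a slightly different order, exactly the chain of equivalences the paper establishes: the paper shows \conjref{c:J} $\Leftrightarrow$ \conjref{c:2nd adj} $\Leftrightarrow$ \conjref{c:CT} $\Leftrightarrow$ \conjref{c:Jacquet duality}, the last being the derived-level intertwining $\BD^{\on{contr}}_{\fg,\chi}\circ J\simeq J^-\circ \BD^{\on{contr}}_{\fg,\chi}$. That part of your argument is sound (modulo checking properness of $\Av^{N^-}_!\circ\oblv_{K/M_K}$ so that \lemref{l:double right}(a) applies), and it is genuinely the same reduction as the paper's, just phrased through Serre-conjugation of adjoints rather than through \propref{p:Av ! and *}.

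The gap is your Step 3, and it is precisely the gap that makes this a conjecture rather than a theorem. The Casselman--Milicic--Hecht--Schmid result gives an isomorphism of the two functors only on $(\fg\mod_\chi^K)^{\heartsuit,\on{f.g.}}$, only for $k=\BC$, and by analytic methods. The paper's logical direction is the opposite of yours: it \emph{deduces} the abelian-level statement \eqref{e:Cass} from the conjecture as a plausibility check, using \thmref{t:descr contr gK}, \thmref{t:descr contr par} and the t-exactness of $J$. Going the other way --- ``extending to all of $(\fg\mod_\chi^K)^c$ by compact generation'' --- does not work as stated: two exact functors on a DG category are not determined by their restrictions to the heart, and the hard content is the construction of a \emph{natural transformation} $\BD^{\on{contr}}\circ J\to J^-\circ\BD^{\on{contr}}$ at the DG level in the first place (Casselman's map is defined only on objects of the abelian category). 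Moreover, when $\chi$ is irregular the compact objects of $\fg\mod_\chi^K$ are not bounded and $\fg\mod_\chi^c\subsetneq \fg\mod_\chi^{\on{f.g.}}$ (see \secref{sss:fg vs comp}), so a d\'evissage from the heart to compact objects is not available. Your closing paragraph correctly diagnoses all of this, but it means the proposal is a reduction plus an honest statement of the open problem, not a proof.
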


In \conjref{c:J}, the notation $J$ stands for the Casselman-Jacquet functor (see \cite{Kim}, where this functor
is studied in detail).  In what follows we will write $J$ instead of $J\circ \oblv_{K/M_K}$, and mean by it the
corresponding functor
$$\fg\mod_\chi^K\to \fg\mod_\chi^{M_K\cdot N^-}.$$

We recall that according to \cite[Theorem 4.2.2]{Kim}, we have
\begin{equation} \label{e:J}
J\simeq \Av^{N^-}_!\circ \oblv_N\circ \Av^N_* \circ \oblv_{K/M_K} \simeq 
\Av^{N^-}_*\circ \oblv_N\circ \Av^N_! \circ \oblv_{K/M_K},
\end{equation} 
where the last isomorphism expressed the ``Verdier self-duality" property of the functor $J$. 

\medskip

Here is yet another equivalent formulation of \conjref{c:2nd adj}:

\begin{conj}  \label{c:CT}
The functors 
$$(-\otimes \ell_{K/M_K})\circ J[-2\dim(X)+\dim(M_K)] \text{ and } \Av^{N^-}_*\circ \oblv_{K/M_K}\circ \on{Ps-Id}_{\fg\mod_\chi^K}$$
from $\fg\mod_\chi^K$ to $\fg\mod_\chi^{M_K\cdot N^-}$
are canonically isomorphic.
\end{conj}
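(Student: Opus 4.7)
My plan is to derive \conjref{c:CT} from \conjref{c:J}, thereby reducing to the already-stated equivalence \conjref{c:J} $\Leftrightarrow$ \conjref{c:2nd adj}. The key technical input is \propref{p:Av ! and *}, which will be used to correctly identify the right adjoint of the composite functor $\Av^{K/M_K}_*\circ \oblv_{N^-}:\fg\mod_\chi^{M_K\cdot N^-}\to \fg\mod_\chi^K$.

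Explicitly, I would apply the $N^-$-variant of \propref{p:Av ! and *},
$$\Av^{K/M_K}_!\simeq \PsId_{\fg\mod_\chi^K}\circ \Av^{K/M_K}_*[2\dim(X)-\dim(M_K)]$$
viewed as an isomorphism of functors $\fg\mod_\chi^{M_K\cdot N^-}\to \fg\mod_\chi^K$, and then take right adjoints of both sides. On the left, the adjoint pair $\Av^{K/M_K}_!\dashv \Av^{N^-}_*$ gives $(\Av^{K/M_K}_!)^R\simeq \Av^{N^-}_*$. On the right, \thmref{t:gH miraculous} provides the mutual-inverse relation between $\PsId_{\fg\mod_\chi^K}$ and $\Se_{\fg\mod_\chi^K}$. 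Inverting the resulting equation yields the identification
$$(\Av^{K/M_K}_*\circ \oblv_{N^-})^R \simeq \bigl(\Av^{N^-}_*\circ \oblv_{K/M_K}\circ \PsId_{\fg\mod_\chi^K}\bigr)[2\dim(X)-\dim(M_K)],$$
with the composites read in the paper's convention (i.e., $\Av^{N^-}_*\circ \oblv_{K/M_K}$ factoring through $\fg\mod_\chi^{M_K}$). Substituting this into \conjref{c:J}, which asserts that this right adjoint equals $(-\otimes \ell_{K/M_K})\circ J$, and transferring the cohomological shift $[2\dim(X)-\dim(M_K)]$ to the opposite side, recovers exactly the statement of \conjref{c:CT}. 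Combined with the already-stated chain \conjref{c:J} $\Leftrightarrow$ \conjref{c:funct eq} $\Leftrightarrow$ \conjref{c:2nd adj}, this closes the argument.

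The main obstacle is to carry out the right-adjoint calculation carefully. A naive application of the standard equivariant adjunctions would suggest the simpler identification $(\Av^{K/M_K}_*\circ \oblv_{N^-})^R \simeq \Av^{N^-}_*\circ \oblv_{K/M_K}$, missing the twist by $\PsId_{\fg\mod_\chi^K}[2\dim(X)-\dim(M_K)]$. The twist arises because the adjunction $\oblv_{K/M_K}\dashv \Av^{K/M_K}_*$ is not ambidextrous in the relevant DG-sense, and \propref{p:Av ! and *} is precisely the tool that quantifies the discrepancy; geometrically, the $\PsId$ appearing on the right-hand side of \conjref{c:CT} has this Serre-type origin and reflects the non-trivial ``Gorenstein'' structure of the stack $K\backslash X$.
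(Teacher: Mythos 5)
Your route is correct, but it is genuinely different from the paper's. The paper derives \conjref{c:CT} from \conjref{c:funct eq} by passing to \emph{dual functors} with respect to the pairing \eqref{e:duality g H chi} and then invoking $J\simeq\Upsilon^{-1}\circ\Av^N_*$. You instead derive it from \conjref{c:J} by passing to \emph{right adjoints}: starting from the $N^-$-variant of \propref{p:Av ! and *}, taking right adjoints of both sides (using $(\PsId_{\fg\mod_\chi^K})^R\simeq\Se_{\fg\mod_\chi^K}$ by \thmref{t:gH miraculous}), and solving for $(\Av^{K/M_K}_*\circ\oblv_{N^-})^R$, so that substituting into \conjref{c:J} yields \conjref{c:CT} after shuffling the shift. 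Both routes are valid and both are manifestly reversible, so they each establish the equivalence with the cluster of conjectures.

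Two remarks on your version. First, you appeal to an $N^-$-variant of \propref{p:Av ! and *} (i.e., with $\oblv_N$ replaced throughout by $\oblv_{N^-}$); the paper only states and proves the $N$-version, so you should say explicitly that this variant follows by the $P\leftrightarrow P^-$ symmetry of the setup (the proof in the paper runs through \cite[Corollary 4.4.2]{Kim}, which is symmetric in the two opposite parabolics). Second, a cosmetic point: when you write ``$(\Av^{K/M_K}_!)^R\simeq\Av^{N^-}_*$'' you should keep the implicit forgetful functors in view, i.e. the adjunction is $\Av^{K/M_K}_!\circ\oblv_{N^-}\dashv\Av^{N^-}_*\circ\oblv_{K/M_K}$, as otherwise the sources and targets do not match. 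What your approach buys: it stays entirely at the level of adjunctions in $\fg\mod^H_\chi$, mirroring the adjunction-theoretic phrasing of \conjref{c:J}, whereas the paper's duality-based route is more in line with the phrasing of \conjref{c:funct eq}; the content is the same, and either makes the origin of the $\PsId$-twist (via $\PsId\simeq\Se^{-1}$) equally transparent.
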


\begin{proof}
We start with the isomorphism 
$$\on{Ps-Id}_{\fg\mod_\chi^K}\circ \Av^{K/M_K}_*\circ \Upsilon^- \simeq  (-\otimes \ell_{K/M_K})\circ \Av^{K/M_K}_*[-2\dim(X)+\dim(M_K)]$$
that follows from \conjref{c:funct eq}. Passing to dual functors with respect to \eqref{e:duality g H chi}, we obtain
$$\Upsilon\circ \Av^{N^-}_* \circ \oblv_{K/M_K}\circ \on{Ps-Id}_{\fg\mod_\chi^K}\simeq  (-\otimes \ell_{K/M_K})\circ
\Av^N_* \circ \oblv_{K/M_K}[-2\dim(X)+\dim(M_K)].$$

Now we use the fact that
$$J\simeq \Upsilon^{-1}\circ  \Av^N_*.$$
\end{proof}

\begin{rem}
Note that Conjectures \ref{c:J} and \ref{c:CT} further reinforce the analogy between the functor $\on{Ps-Id}_{\fg\mod_\chi^K}$
and the Deligne-Lusztig functor for $\fp$-adic groups.
\end{rem} 

\ssec{A plausibility check}

We will now juxtapose \conjref{c:J} with Theorems \ref{t:descr contr gK} and \ref{t:descr contr par}, 
and arrive at a certain plausible (and at the level of abelian categories, known) statement.

\sssec{}

Recall (see \eqref{e:J}) that the functor
$$J:\fg\mod_\chi^K\to \fg\mod_\chi^{M_K\cdot N^-}$$
is isomorphic to 
$$\fg\mod_\chi^K \overset{\Av^N_!\circ \oblv_{K/M_K}}\longrightarrow \fg\mod_\chi^{M_K\cdot N}
\overset{\Upsilon^-}\longrightarrow  \fg\mod_\chi^{M_K\cdot N^-}.$$

In particular, it preserves compactness. Let 
$$J^{\on{op}}:(\fg\mod_\chi^K)^\vee \to (\fg\mod_\chi^{M_K\cdot N^-})^\vee$$
denote the corresponding \emph{opposite} functor. I.e., this is the ind-extension of the functor
$$((\fg\mod_\chi^K)^\vee)^c \simeq ((\fg\mod_\chi^K)^c)^{\on{op}}\overset{J^{\on{op}}}\longrightarrow 
((\fg\mod_\chi^{M_K\cdot N^-})^c)^{\on{op}}\simeq ((\fg\mod_\chi^{M_K\cdot N^-})^\vee)^c.$$

\medskip

The isomorphism in \eqref{e:J} implies that with respect to the canonical identifications
$$(\fg\mod_\chi^K)^\vee  \simeq \fg\mod_{-\chi}^K \text{ and }
(\fg\mod_\chi^{M_K\cdot N^-})^\vee  \simeq \fg\mod_{-\chi}^{M_K\cdot N^-}$$
of \eqref{e:duality g H chi}, we have a commutative diagram
\begin{equation}  \label{e:com diag 1}
\CD
(\fg\mod_\chi^K)^\vee  @>{\text{\eqref{e:duality g H chi}}}>>  \fg\mod_{-\chi}^K  \\
@V{J^{\on{op}}}VV    @VV{J}V  \\
(\fg\mod_\chi^{M_K\cdot N^-})^\vee  @>{\text{\eqref{e:duality g H chi}}}>>    \fg\mod_{-\chi}^{M_K\cdot N^-}. 
\endCD
\end{equation}

In other words, we have an isomorphism as contravariant functors  
$$\BD^{\on{can}}_{\fg,\chi}\circ J \simeq J\circ \BD^{\on{can}}_{\fg,\chi}, \quad 
(\fg\mod_\chi^K)^c\rightrightarrows (\fg\mod_{-\chi}^{M_K\cdot N^-})^c$$

\sssec{}

Let
$$J^-:\fg\mod_\chi^K\to \fg\mod_\chi^{M_K\cdot N}$$
be the variant of the functor $J$, where we swap the roles of $N$ and $N^-$. 

\medskip

We will now show that the following conjecture is equivalent to \conjref{c:2nd adj}:

\begin{conj} \label{c:Jacquet duality} 
The following diagram of functors commutes
\begin{equation}  \label{e:com diag 2}
\CD
(\fg\mod_\chi^K)^\vee   @>{\text{\eqref{e:to be contr gK}}}>>  \fg\mod_{-\chi}^K  \\
@V{J^{\on{op}}}VV  @VV{J^-}V  \\
(\fg\mod_\chi^{M_K\cdot N^-})^\vee  @>{\text{\eqref{e:to be contr par}}}>>    \fg\mod_{-\chi}^{M_K\cdot N}.
\endCD
\end{equation}
Equivalently, we have an isomorphism of the contravariant functors  
\begin{equation} \label{e:J and contr}
\BD^{\on{contr}}_{\fg,\chi}\circ J \simeq J^-\circ \BD^{\on{contr}}_{\fg,\chi}, 
\quad 
(\fg\mod_\chi^K)^c\rightrightarrows (\fg\mod_{-\chi}^{M_K\cdot N})^c.
\end{equation}
\end{conj}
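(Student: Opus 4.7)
The plan is to deduce the equivalence by comparing the conjectured square \eqref{e:com diag 2} to the already-tautological square \eqref{e:com diag 1}. Both express a self-duality of $J$: the former under the contragredient pairing, the latter under the canonical pairing. Since the canonical and contragredient dualities differ by an explicitly computable covariant correction (built from Serre, $\Upsilon$ and determinant lines, as one reads off from \eqref{e:to be contr gK} and \eqref{e:to be contr par}), the conjectured commutativity of \eqref{e:com diag 2} is equivalent to an identity about how $J$ and $J^-$ intertwine these corrections, which after simplification matches one of the reformulations of Conjecture \ref{c:2nd adj}.

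Concretely, I would first unpack \eqref{e:to be contr gK} and the $N\leftrightarrow N^-$ swap of \eqref{e:to be contr par} to write
\begin{equation*}
\BD^{\on{contr}}_{\fg,\chi}\simeq (-\otimes \ell_K)\circ \Se_{\fg\mod^K_{-\chi}}\circ \BD^{\on{can}}_{\fg,\chi} \quad \text{on } \fg\mod^K_\chi,
\end{equation*}
\begin{equation*}
\BD^{\on{contr}}_{\fg,\chi}\simeq (-\otimes \ell_{M_K})\circ \Upsilon\circ \Se_{\fg\mod^{M_K\cdot N^-}_{-\chi}}\circ \BD^{\on{can}}_{\fg,\chi} \quad \text{on } \fg\mod^{M_K\cdot N^-}_\chi.
\end{equation*}
Substituting these into \eqref{e:J and contr} and canceling $\BD^{\on{can}}_{\fg,\chi}$ via \eqref{e:com diag 1}, Conjecture \ref{c:Jacquet duality} is equivalent to the covariant identity
\begin{equation*}
\Upsilon\circ \Se_{\fg\mod^{M_K\cdot N^-}_{-\chi}}\circ J \simeq (-\otimes \ell_{K/M_K})\circ J^-\circ \Se_{\fg\mod^K_{-\chi}}
\end{equation*}
of functors $\fg\mod^K_{-\chi}\to \fg\mod^{M_K\cdot N}_{-\chi}$. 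Next I would use \thmref{t:gH miraculous}(a) to replace each Serre functor by the inverse pseudo-identity functor, and the parabolic analog of \thmref{t:BBM}---namely $\Se_{\fg\mod^{M_K\cdot N^-}_{-\chi}}\simeq \Upsilon^-\circ \Upsilon[2\dim(X)-\dim(M_K)]$, obtained by comparing \eqref{e:to be contr par} with \propref{p:parab AG}---to eliminate the Serre functor on the parabolic side in favor of $\Upsilon^{\pm 1}$. Combined with the averaging-functor expression \eqref{e:J} for $J$ and with \propref{p:Av ! and *}, the resulting identity becomes a reformulation of \conjref{c:CT} (up to the $\chi\leftrightarrow -\chi$ symmetry); the argument is reversible, giving the equivalence in both directions.

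The main obstacle is bookkeeping---tracking the determinant lines $\ell_K$, $\ell_{M_K}$, $\ell_{K/M_K}=\ell_K\otimes \ell_{M_K}^{-1}$ and the cohomological shift $[2\dim(X)-\dim(M_K)]$ through the chain of manipulations, and in particular verifying the parabolic \thmref{t:BBM}-analog used above. Conceptually, however, the equivalence is transparent: \eqref{e:com diag 1} is built into the definition of $J$ via \eqref{e:J}, and \conjref{c:Jacquet duality} is obtained from it by exactly the twist that distinguishes the canonical from the contragredient duality---a twist whose effect on $J$ is precisely what \conjref{c:2nd adj} (in the form of \conjref{c:CT}) computes.
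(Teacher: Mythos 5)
Your strategy matches the paper's: compare the tautological square \eqref{e:com diag 1} (the Verdier self-duality of $J$) against the conjectured square \eqref{e:com diag 2}, reduce the difference between the canonical and contragredient dualities to Serre/$\Upsilon$/determinant-line corrections, and then invoke \thmref{t:gH miraculous} and the $\Upsilon\Upsilon^-$-description of the parabolic Serre functor to land on \conjref{c:CT}. The paper's own proof does exactly this, using \propref{p:parab AG} (i.e.\ [Kim, Prop.\,4.1.7]) to handle the bottom horizontal arrow.

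However, your unpacking of the $N\leftrightarrow N^-$ swap of \eqref{e:to be contr par} contains a substantive error. The arrow labeled $\Upsilon^{-1}$ in \eqref{e:to be contr par} is the \emph{inverse} of $\Upsilon:\bC^{N^-}\to\bC^N$; under the swap $N\leftrightarrow N^-$ it goes to $(\Upsilon^-)^{-1}:\bC^{N^-}\to\bC^N$, \emph{not} to $\Upsilon$. These are genuinely different: since $\Se_{\fg\mod^{M_K\cdot N}_{-\chi}}\simeq \Upsilon\circ\Upsilon^-[2\dim X-\dim M_K]$ (the parabolic BBM relation you cite), the two differ precisely by a nontrivial Serre twist. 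Consequently your covariant intermediate identity
\begin{equation*}
\Upsilon\circ \Se_{\fg\mod^{M_K\cdot N^-}_{-\chi}}\circ J \simeq (-\otimes \ell_{K/M_K})\circ J^-\circ \Se_{\fg\mod^K_{-\chi}}
\end{equation*}
is off by that factor, and carrying out the remaining steps will not reproduce \conjref{c:CT}. The fix is to replace $\Upsilon$ by $(\Upsilon^-)^{-1}$; cleaner still, do as the paper does and feed in \propref{p:parab AG} directly, which rewrites the swapped bottom arrow as $(-\otimes\ell_{M_K})\circ \Upsilon[2\dim X-\dim M_K]$ post-composed with \eqref{e:duality g H chi} in one step, sidestepping the separate isolation and reabsorption of the parabolic Serre functor. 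With that correction, the final passage (use $J\simeq \Upsilon^{-1}\circ\Av^N_*\circ\oblv_{K/M_K}$ and \thmref{t:gH miraculous}(a) to trade $\Se$ for $\on{Ps\text{-}Id}^{-1}$) reduces the statement to \conjref{c:CT} as you intend.
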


\begin{rem}
Note that the analog of \eqref{e:J and contr} for admissible representations of $\fp$-adic group
is a known statement, which can also be easily deduced from the 2nd adjointness theorem.
\end{rem} 

\begin{proof}

Juxtaposing the diagrams \eqref{e:com diag 1} and \eqref{e:com diag 2} and using \cite[Theorem 4.1.7]{Kim},
we obtain that \conjref{c:Jacquet duality}
is equivalent to the commutation of the diagram
$$
\CD
\fg\mod_\chi^K  @>{(-\otimes \ell_K)\circ \Se_{\fg\mod_\chi^K}}>>  \fg\mod_\chi^K \\
@V{J}VV   @VV{J^-}V   \\ 
\fg\mod_\chi^{M_K\cdot N^-}  @>{(-\otimes \ell_{M_K})\circ \Upsilon[2\dim(X)-\dim(M_K)]}>> \fg\mod_\chi^{M_K\cdot N}.  
\endCD
$$

Using the fact that $J\simeq \Upsilon^{-1}\circ \Av^N_*\circ \oblv_{K/M_K}$, the commutativity of the above
diagram is equivalent to the isomorphism
$$\Av^N_*\circ \oblv_{K/M_K}[2\dim(X)-\dim(M_K)]\simeq (-\otimes \ell_{K/M_K})\circ J^-\circ \Se_{\fg\mod_\chi^K},$$
or equivalently
$$\Av^N_*\circ \oblv_{K/M_K}\circ \on{Ps-Id}_{\fg\mod_\chi^K}\simeq (-\otimes \ell_{K/M_K})\circ J^-[-2\dim(X)+\dim(M_K)],$$
while the latter is \conjref{c:CT}.

\end{proof}

\sssec{}

Consider the ind-extensions of the (contravariant) functors 
$\BD^{\on{contr}}_{\fg,\chi}\circ J$ and  $J^-\circ \BD^{\on{contr}}_{\fg,\chi}$, appearing in \conjref{c:Jacquet duality}.
We obtain two functors
$$\fg\mod_\chi^K\rightrightarrows (\fg\mod_{-\chi}^{M_K\cdot N})^{\on{op}},$$
and let us restrict both to 
$$(\fg\mod_\chi^K)^{\on{f.g.}}\subset \fg\mod_\chi^K.$$

\medskip

We note:

\begin{lem}
The functor $J$ (resp., $J^-$) maps $(\fg\mod_\chi^K)^{\on{f.g.}}$ to $(\fg\mod_\chi^{M_K\cdot N^-})^{\on{f.g.}}$ 
(resp., $(\fg\mod_\chi^{M_K\cdot N})^{\on{f.g.}}$).
\end{lem}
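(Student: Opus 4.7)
The strategy is to decompose $J$ via formula \eqref{e:J},
$$J \simeq \Av^{N^-}_!\circ \oblv_N\circ \Av^N_* \circ \oblv_{K/M_K},$$
and to verify that each step preserves the property of having bounded, finitely generated cohomology on the underlying $\fg$-module. The two forgetful functors $\oblv_{K/M_K}$ and $\oblv_N$ do not alter the underlying $\fg$-module, so they trivially preserve both properties. The issue is therefore concentrated in the two averaging steps.

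The heart of the matter is the Casselman-Jacquet-type operation $\Av^N_*\circ \oblv_{K/M_K}$. Cohomological boundedness is immediate, since on $N$-locally nilpotent modules this functor is computed by a Chevalley-Eilenberg complex for $\fn$ of amplitude $\dim(\fn)$. For preservation of finite generation of the individual cohomologies, I would invoke the analysis carried out in \cite{Kim}: the main content of Casselman-Jacquet theory there is precisely that the abstract Casselman-Jacquet functor $\Av^N_*\circ\oblv_{K/M_K}$ sends finitely generated Harish-Chandra modules to $(\fg,M_K\cdot N)$-modules whose cohomologies are finitely generated (indeed, of finite length). For the last step $\Av^{N^-}_!$, one decomposes into generalized $\fa$-eigenspaces using that $\fa$ (the $\theta$-split part of the center of $\fm$) acts locally finitely on any object produced by the Casselman-Jacquet functor, and then $\Av^{N^-}_!$ is computed by an $\fn^-$-Chevalley-Eilenberg complex of amplitude $\dim(\fn^-)$; finite generation of the output again follows from standard finiteness properties of parabolic induction applied to finite-length $(\fm,M_K)$-modules.

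An alternative (and in my opinion cleaner) route is to translate the entire question to the geometric side through Localization. Since both $K$ and $M_K\cdot N^-$ are spherical in $G$ (the latter by sphericity of $K$ together with $K\cdot P^-$ being open in $G$), \thmref{t:Y miraculous} and \secref{sss:loc} identify $(\fg\mod_\chi^H)^{\mathrm{f.g.}}$ with the coherent bounded subcategory of $\Dmod_\lambda(H\backslash X)$ for $H \in \{K, M_K\cdot N^-\}$ up to the discrepancy between $\Loc$ and the identity. The functor $J$ corresponds geometrically to a composition of averaging and de-averaging functors between stacks with finitely many isomorphism classes of points, which manifestly preserves coherence and cohomological boundedness.

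The main obstacle is that the second (geometric) approach breaks down cleanly only when $\chi$ is regular, since otherwise $\Loc$ fails to be an equivalence. To handle irregular $\chi$, I would first establish the lemma at the level of $\fg\mod_{\{\chi\}}^K$ using the $(\Loc',\Gamma',\coLoc')$ adjunction from \lemref{l:global good} and \lemref{l:Loc sim}, and then descend to $\fg\mod_\chi^K$ via the $((i_\chi)^*,(i_\chi)_*,(i_\chi)^!)$ adjunction, exactly as in Step~3 of the proof of \thmref{t:gH miraculous}; the same bootstrapping gives the analogous statement for $J^-$ by symmetry.
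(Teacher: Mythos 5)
Your first route contains a genuine gap at the key step. You assert that cohomological boundedness of $\Av^N_*\circ\oblv_{K/M_K}$ is ``immediate, since on $N$-locally nilpotent modules this functor is computed by a Chevalley--Eilenberg complex for $\fn$.'' But the input of $\Av^N_*$ here, coming from $\oblv_{K/M_K}$, is \emph{not} $N$-locally nilpotent --- producing $N$-local nilpotence is precisely what $\Av^N_*$ is there to do. As the right adjoint to $\oblv_N$, $\Av^N_*$ has a priori unbounded amplitude on a general object of $\fg\mod_\chi^{M_K}$, and its boundedness on the image of $\oblv_{K/M_K}$ is exactly the content of the t-exactness theorem \cite[Theorem 4.4.2(a)]{Kim}. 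So your argument quietly assumes the boundedness it is supposed to prove. The appeals to ``the analysis carried out in \cite{Kim}'' and ``standard finiteness properties of parabolic induction'' are too imprecise to close the remaining steps, and the geometric alternative is a plan rather than a proof.

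The paper's proof sidesteps all of this by exploiting compactness rather than trying to control the individual factors of $J$. Since $J$ is t-exact (cited from \cite[Theorem 4.4.2(a)]{Kim}), it preserves boundedness and commutes with taking cohomology, so it suffices to show that $J$ sends $(\fg\mod_\chi^K)^{\heartsuit,\on{f.g.}}$ to $(\fg\mod_\chi^{M_K\cdot N^-})^{\heartsuit,\on{f.g.}}$. Given $\CM$ in the heart, one chooses a compact $\CM'$ concentrated in nonpositive degrees with $H^0(\CM')\simeq\CM$ (a bounded complex of compact projectives); then $J(\CM')$ is compact because each factor of $J$ in \eqref{e:J} preserves compactness, hence f.g.\ by $(\fg\mod_\chi^{M_K\cdot N^-})^c\subset(\fg\mod_\chi^{M_K\cdot N^-})^{\on{f.g.}}$, and $H^0(J(\CM'))=J(\CM)$ by right t-exactness. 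I suggest you adopt this route: it uses only t-exactness of $J$, preservation of compactness, and the inclusion of compacts into f.g.\ objects, and needs nothing about the separate amplitudes of $\Av^N_*$, $\Av^{N^-}_!$, etc.
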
 

\begin{proof}

According to \cite[Theorem 4.4.2(a)]{Kim}, the functor $J$ is t-exact. Hence, it is enough to show that it sends 
$(\fg\mod_\chi^K)^{\heartsuit,\on{f.g.}}$ to $(\fg\mod_\chi^{M_K\cdot N^-})^{\heartsuit,\on{f.g.}}$.

For that, it is sufficient to show that for every object $\CM\in (\fg\mod_\chi^K)^{\heartsuit,\on{f.g.}}$, there
exists an object $\CM'\in ((\fg\mod_\chi^K)^{\on{f.g.}})^{\leq 0}$ with $H^0(\CM')\simeq \CM$, such
that $J(\CM')\in (\fg\mod_\chi^{M_K\cdot N^-})^{\on{f.g.}}$.

\medskip

However, for every $\CM$ as above, we can choose $\CM'$ compact, such that $H^0(\CM')\simeq \CM$. 
The claim now follows since $J$ preserves compactness and

$$(\fg\mod_\chi^{M_K\cdot N^-})^c\subset (\fg\mod_\chi^{M_K\cdot N^-})^{\on{f.g.}}.$$

\end{proof}

\sssec{}

Thus, we obtain two contravariant functors
$$(\fg\mod_\chi^K)^{\on{f.g.}}\to (\fg\mod_{-\chi}^{M_K\cdot N})^{\on{f.g.}},$$
which according to Theorems \ref{t:descr contr gK} and \ref{t:descr contr par}, 
and \cite[Theorem 4.4.2(a)]{Kim} are t-exact.

\medskip

Moreover, the corresponding functors
$$(\fg\mod_\chi^K)^{\heartsuit,\on{f.g.}}\rightrightarrows (\fg\mod_{-\chi}^{M_K\cdot N})^{\heartsuit,\on{f.g.}},$$
identify with
\begin{equation} \label{e:Cass}
\CM\mapsto J(\CM)^\vee \text{ and } \CM\mapsto J^-(\CM^\vee),
\end{equation}
respectively. 

\medskip

Now, the isomorphism between the functors \eqref{e:Cass} is known, when the ground field $k$ equals 
$\BC$. Namely, W. Casselman constructed the map $J^- (\CM^{\vee}) \to J(\CM)^{\vee}$ (see. for example, \cite{Ca}), and D. Milicic, and later H. Hecht and W. Schmid, showed that it is an isomorphism (see \cite{M}, \cite{HS}). The constructions and methods are analytic, using the asymptotic expansion of matrix coefficients.

\end{document}